% !TeX spellcheck = en_US
% SIAM Article Template
\documentclass[review,onefignum,onetabnum]{siamart190516}
%\documentclass[a4wide,11pt]{article}

%my packages
\usepackage{color}
\usepackage{tikz}
\usetikzlibrary{arrows,calc,through,backgrounds,matrix,decorations.pathmorphing}
%\usepackage[]{algorithm2e}

%% Used for creating new theorem and remark environments

\newtheorem{example}{Example}
\newtheorem{assumption}{Assumption}
\usepackage{multirow}
\usepackage{multicol}

\nolinenumbers

\everymath{\displaystyle}
\usepackage{subfigure}

% Information that is shared between the article and the supplement
% (title and author information, macros, packages, etc.) goes into
% ex_shared.tex. If there is no supplement, this file can be included
% directly.

% SIAM Shared Information Template
% This is information that is shared between the main document and any
% supplement. If no supplement is required, then this information can
% be included directly in the main document.

% Packages and macros go here
%\usepackage{lipsum}
\usepackage{amsfonts}
\usepackage{graphicx}
\usepackage{epstopdf}
\usepackage{algorithmic}
\ifpdf
  \DeclareGraphicsExtensions{.eps,.pdf,.png,.jpg}
\else
  \DeclareGraphicsExtensions{.eps}
\fi

% Add a serial/Oxford comma by default.

% Used for creating new theorem and remark environments
\newsiamremark{remark}{Remark}
\newsiamremark{hypothesis}{Hypothesis}
\crefname{hypothesis}{Hypothesis}{Hypotheses}
\newsiamthm{claim}{Claim}

% Sets running headers as well as PDF title and authors
\headers{Uniform stability of switched nonlinear systems 
%in the Koopman operator framework
%:\\  a  Lie-algebraic condition on nonlinear vector fields.
}{C. M. Zagabe and A. Mauroy}

% Title. If the supplement option is on, then "Supplementary Material"
% is automatically inserted before the title.
\title{Uniform global stability of switched nonlinear systems in the Koopman operator framework%
%:\\  a  Lie-algebraic condition on nonlinear vector fields on polydisk.
\thanks{Submitted to the editors
% \textcolor{red}{DATE?????}.
%\funding{This work was funded by the Fog Research Institute under contract no.~FRI-454.}
}}

% Authors: full names plus addresses.
\author{Christian Mugisho Zagabe\thanks{Department of  Mathematics and Namur Research Institute for Complex Systems (naXys),  University of Namur
  (\email{christian.mugisho@unamur.be}),}
\and Alexandre Mauroy \thanks{Department of  Mathematics and Namur Research Institute for Complex Systems (naXys),  University of Namur
  (\email{alexandre.mauroy@unamur.be})}}

\usepackage{amsopn}

%%% Local Variables: 
%%% mode:latex
%%% TeX-master: "ex_article"
%%% End: 

% Optional PDF information
\ifpdf
\hypersetup{
  pdftitle={Uniform global stability of switched nonlinear systems in the Koopman operator framework},
  pdfauthor={C. M. Zagabe and A. Mauroy}
}
\fi

% The next statement enables references to information in the
% supplement. See the xr-hyperref package for details.

%\externaldocument{christian_supplement}

% FundRef data to be entered by SIAM
%<funding-group specific-use="FundRef">
%<award-group>
%<funding-source>
%<named-content content-type="funder-name"> 
%</named-content> 
%<named-content content-type="funder-identifier"> 
%</named-content>
%</funding-source>
%<award-id> </award-id>
%</award-group>
%</funding-group>

\begin{document}
	
	\maketitle
	
	% REQUIRED
	\begin{abstract}
		
		In this paper, we provide a novel solution to an open problem on the global uniform stability of switched nonlinear systems. Our results are based on the Koopman operator approach and, to our knowledge, this is the first theoretical contribution to an open problem within that framework. By focusing on the adjoint of the Koopman  generator in the Hardy space on the polydisk (or on the real hypercube), we define equivalent linear (but infinite-dimensional) switched systems and we construct a common Lyapunov functional for those systems, under a solvability condition of the Lie algebra generated by the linearized vector fields. A common Lyapunov function for the original switched nonlinear systems is derived from the Lyapunov functional by exploiting the reproducing kernel property of the Hardy space. The Lyapunov function is shown to converge in a bounded region of the state space, which proves global uniform stability of specific switched nonlinear systems on bounded invariant sets.
	\end{abstract}
	
	% REQUIRED
	\begin{keywords}
		Koopman operator, Hardy space on the polydisk, Switched systems, Uniform stability, Common Lyapunov function.
	\end{keywords}
	
	% REQUIRED
	\begin{AMS}
		47B32, 47B33, 47D06, 70K20, 93C10, 93D05.
	\end{AMS}
	
	%\tableofcontents
	
	\section{Introduction}

	Switched systems are hybrid-type models encountered in applications where the dynamics abruptly jump from one behavior to another. They are typically described by a family of subsystems that alternate according to a given commutation law. Stability properties of switched systems have been the focus of intense research effort (see e.g. \cite{shorten2007stability} for a review). In this context, a natural question is whether a switched system with an equilibrium point is \emph{uniformly stable}, that is, stable for any commutation law. It turned out that the uniform stability problem is counter-intuitive and challenging. In the linear case, it is well-known that stable subsystems may induce an unstable switched system. However, uniform stability is guaranteed if the matrices associated with the subsystems are stable and commute pairwise \cite{narendra1994common}, a result which is extended in \cite{liberzon1999stability} to subsystems described by stable matrices generating a solvable Lie algebra. This latter result can be explained by the well-known equivalence between solvable Lie algebra of matrices and the existence of a common invariant flag for those matrices, which allows to construct a common Lyapunov function for the subsystems \cite{zagabe2021switched}.

	In the case of switched nonlinear systems, an open problem was posed in \cite{liberzon2004lie} on the relevance of Lie-algebraic conditions of  vector fields for global uniform stability. Partial solutions have been proposed in this context. It was proven in \cite{mancilla2000condition} that uniform stability holds if the vector fields are individually stable and commute, in which case a common Lyapunov function can be constructed \cite{Shim2001CommonLF,vu2005common}. Uniform stability was also shown for a pair of vector fields generating a third-order nilpotent Lie algebra \cite{sharon2005third} and for particular $r$-order nilpotent Lie algebras \cite{margaliot2006lie}. 
	However, no result has been obtained, which solely relies on the more general solvability property of Lie algebras of the subsystems vector fields. See \cite{liberzon2023commutation} for a historical review about this open problem.
	
	In this paper, we provide a partial solution to the problem introduced in \cite{liberzon2004lie} by proving global uniform stability results for switched nonlinear systems under a general solvability property of Lie algebras. To do so, we rely on the Koopman operator framework \cite{budivsic2012applied, mauroy2020koopman}: we depart from the classical pointwise description of dynamical systems and consider instead the evolution of observable functions. These functions are first assumed to belong to general reproducing kernel Hilbert spaces, and then to the Hardy space of holomorphic functions defined on the complex polydisk or on the real hypercube. This approach is related to the previous works \cite{ikeda2022koopman,rosenfeld2022dynamic,rosenfeld2019occupation,rosenfeld2019occupation2} studying the Koopman operator defined on those spaces. Through this framework, equivalent infinite-dimensional dynamics are generated by linear Koopman generators, so that nonlinear systems are represented by Koopman linear systems that are amenable to global stability analysis \cite{mauroy2016global}. In particular, building on preliminary results obtained in \cite{zagabe2021switched}, we construct a common Lyapunov functional for switched Koopman linear systems. A key point is to focus on the adjoint of the Koopman generators and notice that these operators have a common invariant maximal flag if the linear parts of the subsystems generate a solvable Lie algebra, a condition that is milder than the original assumption proposed in \cite{liberzon2004lie}. Finally, we derive a common Lyapunov function under the form of a convergent infinite series for the original switched nonlinear system. This allows us to obtain a bounded invariant region of the unit polydisk where the switched nonlinear system is globally uniformly asymptotically stable. Moreover, the common Lyapunov function series can be truncated and considered as a candidate common Lyapunov function, which can potentially lead to an approximation of the region of attraction beyond the polydisk. To our knowledge, this is the first time that a novel solution to an open theoretical problem is obtained within the Koopman operator framework.
	
	The rest of the paper is organized as follows. In Section \ref{sec:prelim}, we present some preliminary notions on uniform stability of switched nonlinear systems and give a general introduction to the Koopman operator framework, as well as some specific properties on reproducing kernel Hilbert spaces of analytic functions and, in particular, on (weighted)  Hardy spaces on the polydisk (or on the real hypercube). In Section \ref{sec:mainresult}, we state and prove our main result. We recast the open problem given in \cite{liberzon2004lie} in terms of the existence of an invariant maximal flag and we provide a constructive proof for the existence of a common Lyapunov function. Additional corollaries are also given, which focus on specific classes of vector fields. Our main results are illustrated with two examples in Section \ref{sec:illustra}. Finally, concluding remarks and perspectives are given in Section \ref{sec:concl}.

	\subsection*{Notations}
	We will use the following notations throughout the manuscript.
	For multi-index notations  $\alpha = (\alpha_1 ,..., \alpha_n)\in \mathbb{N}^n$, we define $\vert\alpha\vert = \alpha_1+ \cdots + \alpha_n$ and $z^\alpha = z_1^{\alpha_1}\cdots z_n^{\alpha_n}$.
	The complex conjugate and real part of a complex number $a$ are denoted by $\bar{a}$ and $\Re(a)$, respectively. The transpose-conjugate of a matrix (or vector) $A$ is denoted by $A^\dagger$. The Jacobian matrix of the vector field $F$ at $x$ is given by $JF(x)$.
	The complex polydisk centered at $0$ and of radius $\rho$ is defined by 
	\[\mathbb{D}^n(\rho)=\left\lbrace z\in \mathbb{C}^n: | z_1|<\rho,\cdots, | z_n|<\rho  \right\rbrace.\]
	The real hypercube with edge of length $2\rho$ and centered at $0$ is defined by 
	\[\mathbb{X}^n(\rho)=\left\lbrace x\in \mathbb{R}^n:  |x_1|<\rho,\cdots,  |x_n|<\rho \right\rbrace.\]
	In particular, $\mathbb{D}^n$ (resp. $\mathbb{X}^n$) denotes the unit (i.e. with $\rho=1$) polydisk (resp. hypercube)  and $\partial \mathbb{D}^n$ (resp.  $\partial \mathbb{X}^n$) is its boundary. Similarly, \[\mathbb{B}^n(\rho)=\left\lbrace z\in \mathbb{C}^n: \| z\|<\rho \right\rbrace\] is the ball centered at $0$ and of radius $\rho$. In particular, $\mathbb{B}^n$ is the unit ball. Finally, the floor of a real number is denoted by $\lfloor x\rfloor$.

	%%%%%%%%%%%%%%%%%%%%%%%%%%%%%%%%%%%%%%%%%%%%%%%%%%%%%%%%%%%%%%%%%%%%%%%%%%%%%%%%
	
	\section{Preliminaries}
	\label{sec:prelim}
	In this section, we introduce preliminary notions and results on the stability theory for switched systems and on the Koopman operator framework.  
	
	\subsection{Stability of switched systems}
	
	We focus on the uniform asymptotic stability property of switched systems and on the existence of a common Lyapunov function. Some existing results that connect these two main concepts are presented in both linear and nonlinear cases.
	
	\begin{definition}[Switched system]
		A switched system $\dot{x}=F^{(\sigma)}(x)$ is a (finite) set of subsystems 
		\begin{equation}\label{eq:switch}
			\left\lbrace \dot{x}=F^{(i)}(x), \, x\in X\subset \mathbb{R}^n\right\rbrace_{i=1}^m
		\end{equation}
		associated with a commutation law $\sigma :\mathbb{R}^+\rightarrow \left\lbrace 1,\cdots,m\right\rbrace $ indicating which subsystem is activated at a given time. 
	\end{definition}
	In this paper, we make the following standing assumptions.
	\begin{assumption}
		\label{assump1}
		The commutation law $\sigma$ is a piecewise constant function with a finite number of discontinuities on every bounded time interval (see e.g. \cite{liberzon2003switching}).
	\end{assumption}
	
	%\com{Cette hypo semble arriver au mauvais endroit. Il faudrait peut-être la bouger.}
	%Finally, we make the following additional standing assumption.
	\begin{assumption}
		\label{assump_hyperbolic}
		The vector fields $F^{(i)}$ admit on $X$ a unique hyperbolic stable equilibrium at $x_e=0\in X$ (without loss of generality), i.e. $F^{(i)}(x_e)=0$ and the eigenvalues $\tilde{\lambda}^{(i)}_j$ of the Jacobian matrices $JF^{(i)}(x_e)$ satisfy $\Re\left(\tilde{\lambda}^{(i)}_j\right)<0$.
	\end{assumption}
	
	\subsubsection{Uniform stability}
	
	According to \cite{liberzon1999basic}, stability analysis of switched systems revolves around three important problems:
	\begin{itemize}
		\item decide whether an equilibrium is stable under the action of the switched system for \emph{any} commutation law $\sigma$, in which case the equilibrium is said to be \emph{uniformly stable},
		\item identify the commutation laws for which the equilibrium is stable, and
		\item construct the commutation law for which the equilibrium is stable.
	\end{itemize}
	In this paper we focus on the first problem related to uniform stability.
	\begin{definition}[Uniform stability]
		Assume that $F^{(i)}(x_e)=0$ for all $i=1,\dots,m$. The equilibrium $x_e=0$ is 
		\begin{itemize}
			\item \emph{uniformly asymptotically stable} (UAS) if $\forall \epsilon >0$, \mbox{$\exists \delta >0$} such that
			\begin{equation*}
				\| x(0)-x_e\| \leq \delta \, \Rightarrow  \,\| x(t)-x_e\| \leq \epsilon,\, \, \forall t>0, \, \forall \sigma
			\end{equation*} 
			and $\forall \epsilon>0$, $\exists \delta >0, T>0$ such that
			\begin{equation*}
				\| x(0)-x_e\| \leq \delta, \,t>T \Rightarrow \| x(t) -x_e\|\leq \epsilon , \, \forall \sigma,
			\end{equation*}
			
			\item \emph{globally uniformly asymptotically  stable} (GUAS) on $ X\subseteq \mathbb{R}^n$ if it is UAS and $\forall \epsilon>0$, $\exists T>0$ such that 
			\begin{equation*}
				x(0)\in X, \, t>T \Rightarrow \| x(t) -x_e\|\leq \epsilon , \, \forall \sigma,
			\end{equation*}
			
			\item \emph{globally uniformly exponentially stable} (GUES) on \mbox{$X\subseteq \mathbb{R}^n$} if $\exists \beta, \lambda >0$ such that
			\begin{equation*}
				x(0)\in X \Rightarrow \| x(t)-x_e\| \leq \beta \|x(0)-x_e\| e^{-\lambda t},\,\, \forall t>0, \forall \sigma.
			\end{equation*}
		\end{itemize}
	\end{definition}
	This definition implies that the subsystems share a common equilibrium. Moreover, a necessary condition is that this equilibrium is asymptotically stable with respect to the dynamics of all individual subsystems. However, this condition is not sufficient, since the switched system might be unstable for a specific switching law. A sufficient condition for uniform asymptotic stability is the existence of a \emph{common Lyapunov function} (CLF).
	
	\begin{definition}[Common Lyapunov function \cite{liberzon2003switching}]
		A positive $\mathcal{C}^{1}$- function
		$V: X \subseteq \mathbb{R}^{n}\rightarrow \mathbb{R}$ is a \emph{common Lyapunov function} on $X \subseteq \mathbb{R}^n$ for the family of subsystems (\ref{eq:switch}) if 
		\begin{equation*}
			\nabla V \cdot F^{(i)}(x)<0 \quad \forall x \in X\setminus\{x_e\}, \quad \forall i=1,\dots,m.
		\end{equation*}
	\end{definition}
	\smallskip
	For switched systems with a finite number of subsystems, a converse Lyapunov result also holds (\cite{liberzon2003switching}, \cite{mancilla2000condition}).
	\begin{theorem}[\cite{mancilla2000condition}]
		Suppose that $X \subseteq \mathbb{R}^n$ is compact and forward-invariant with respect to the flow induced by the subsystems (\ref{eq:switch}). The switched system (\ref{eq:switch}) is GUAS on $X$ if and only if all subsystems share a CLF on $X$. 
	\end{theorem}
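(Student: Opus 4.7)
The plan is to treat the two implications separately. The forward direction (existence of a CLF implies GUAS) is essentially the classical Lyapunov argument adapted to switched systems, whereas the reverse implication (GUAS implies existence of a CLF) is a genuine converse Lyapunov theorem and carries most of the technical weight.

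For sufficiency, suppose $V$ is a CLF on $D$. For any initial condition $x(0)\in D$ and any commutation law $\sigma$ satisfying \cref{assump1}, the trajectory $x(\cdot)$ remains in $D$ by forward-invariance, and on each subinterval where $\sigma$ is constant equal to $i$ one has $\frac{d}{dt}V(x(t)) = \nabla V(x(t))\cdot F^{(i)}(x(t)) < 0$ whenever $x(t)\neq x_e$. Since the family $\{F^{(i)}\}$ is finite and $D\setminus B(x_e,\varepsilon)$ is compact, there is $\mu(\varepsilon)>0$ such that $\nabla V\cdot F^{(i)}\le -\mu(\varepsilon)$ on $D\setminus B(x_e,\varepsilon)$ for every $i$. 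Integrating the inequality across switching times (which are finitely many on bounded intervals) gives the usual $\varepsilon$--$\delta$ estimate and the convergence $V(x(t))\to 0$, uniformly in $\sigma$, which translates to GUAS via positive definiteness of $V$.

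For necessity, assume GUAS on $D$. From the definition, one extracts a $\mathcal{KL}$-type estimate $\|\phi_\sigma(t,x)-x_e\|\le \beta(\|x-x_e\|,t)$ holding for every $\sigma$. Following the classical Massera / Lin--Sontag--Wang recipe, I would define the candidate
\begin{equation*}
W(x) = \sup_{\sigma}\int_0^\infty g\bigl(\|\phi_\sigma(t,x)-x_e\|\bigr)\,dt,
\end{equation*}
where $g\in\mathcal{K}_\infty$ is chosen slowly growing enough to guarantee convergence uniformly in $\sigma$ thanks to the $\mathcal{KL}$-bound. Continuous dependence of solutions on initial data, applied on the compact set $D$ and on bounded time windows, together with tail control from $\beta$, ensures continuity and positive definiteness of $W$. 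A direct time differentiation of $W$ along an arbitrary subsystem $F^{(i)}$ (using dynamic programming / the semigroup property of the switched flow) yields the differential inequality $D^{+}W\cdot F^{(i)}\le -g(\|x-x_e\|)$ in a lower-Dini sense, simultaneously for all $i$.

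The main obstacle is promoting $W$, which is a priori only continuous (or locally Lipschitz), to a genuinely $\mathcal{C}^{1}$ function without losing the simultaneous strict decrease along every $F^{(i)}$. I would address this by invoking the Lin--Sontag--Wang smoothing technique: convolve $W$ with a bump function whose scale shrinks as $x\to x_e$, producing a smooth $V$ close to $W$ in the $\mathcal{C}^0$ sense off the equilibrium. The delicate point is that the perturbation of $\nabla V\cdot F^{(i)}$ introduced by smoothing must remain strictly smaller than the Dini decrease rate $g(\|x-x_e\|)$ for \emph{all} $i$ at once; here the finiteness of the subsystem family is crucial, since it allows a single scale function controlling the mollification error to dominate only finitely many quantities. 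Once this is arranged, $V$ is the desired CLF and the theorem follows.
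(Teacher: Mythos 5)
This theorem is imported by the paper from the cited reference and is stated without proof, so there is no in-paper argument to compare against; what you have written is a sketch of the standard converse-Lyapunov machinery, and it is the right machinery. Your sufficiency direction is routine and correct. For necessity, your construction $W(x)=\sup_{\sigma}\int_0^\infty g(\|\phi_\sigma(t,x)-x_e\|)\,dt$ followed by mollification is essentially the Lin--Sontag--Wang route; the cited source of Mancilla-Aguilar and Garc\'ia reaches the same conclusion by a slightly different packaging, namely by identifying GUAS of the switched system with (robust) asymptotic stability of the associated differential inclusion $\dot x\in\{F^{(i)}(x)\}_{i=1}^m$ and then invoking the converse Lyapunov theorem for differential inclusions. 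The two formulations are equivalent here, and your direct sup-over-$\sigma$ version makes the dynamic-programming inequality $W(x)\ge\int_0^h g(\|\phi_i(s,x)-x_e\|)\,ds+W(\phi_i(h,x))$ (valid because ``follow $i$ on $[0,h]$, then switch arbitrarily'' is an admissible signal under the standing assumption on $\sigma$) particularly transparent.

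Two points in your outline carry real technical weight and are asserted rather than established. First, the existence of a single $\mathcal{KL}$ bound $\beta$ uniform over all signals, and the choice of $g$ making the integral finite and $W$ locally Lipschitz, require the standard but nontrivial lemmas on $\mathcal{KL}$ majorization (Sontag's decomposition $\beta(r,t)\le\alpha_1(\alpha_2(r)e^{-t})$) and a compactness argument over the signal family; continuity of a supremum over infinitely many signals does not follow from continuous dependence alone. Second, the smoothing step must preserve strict decrease not only for all $i$ simultaneously (where finiteness of the family saves you, as you note) but also down to the equilibrium, where the decrease rate $g(\|x-x_e\|)$ degenerates; the published proofs handle this with a separate rescaling near $x_e$. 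Neither issue is a wrong turn, but a complete proof must execute both, which is precisely why the paper defers to the literature rather than reproving the result.
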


	\subsubsection{Lie-algebraic conditions in the linear case}
	
	In the case of switched linear systems $\{\dot{x}=A^{(i)} x, A^{(i)}\in \mathbb{C}^{n\times n}\}_{i=1}^m$, several results related to uniform stability have been proved (see \cite{shorten2007stability} for a review). We focus here on specific results based on Lie-algebraic conditions.
	
	Let $\mathfrak{g}=\textrm{span}\left\lbrace A^{(i)}\right\rbrace_{Lie}$ denote the Lie algebra generated by the matrices $A^{(i)}$, with $i=1,\cdots, m$, and equipped with the Lie bracket $[A^{(i)},A^{(j)}]=A^{(i)}A^{(j)}-A^{(j)}A^{(i)}$. 
	
	\begin{definition}[Solvable  Lie algebra]
		A Lie algebra $\mathfrak{g}$ equipped with the Lie bracket $[.,.]$ is said to be solvable if there exists $k \in \mathbb{N}$ such that $\mathfrak{g}^k=0$, 
		where $\{\mathfrak{g}^j\}_{j \in \mathbb{N}^*}$ is a descendant sequence of ideals defined by
		$$\begin{cases} \mathfrak{g}^1:= \mathfrak{g}\\ \mathfrak{g}^{j+1}:=\left[ \mathfrak{g}^j,\mathfrak{g}^j\right].\end{cases}$$
	\end{definition}
	A general Lie-algebraic criterion for uniform exponential (asymptotic) stability of switched linear systems is given in the following theorem.
	\begin{theorem}[\cite{liberzon1999stability}]\label{thm:lib}
		If all matrices $A^{(i)}$, $i=1,\cdots,m$, are stable (i.e. with eigenvalues $\tilde{\lambda}_{j}^{(i)}$ such that $\Re\left(\tilde{\lambda}_{j}^{(i)}\right)<0$) and if the Lie algebra $\mathfrak{g} $ is solvable, then the switched linear system $\{\dot{x}=A^{(i)} x\}_{i=1}^m$ is GUES.
	\end{theorem}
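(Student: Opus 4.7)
The plan is to reduce the problem, via Lie's theorem, to constructing a common quadratic Lyapunov function for simultaneously upper-triangular matrices. Since $\mathfrak{g}$ is a solvable Lie algebra of complex matrices, Lie's theorem yields a common invariant maximal flag
\[
\{0\} = V_0 \subset V_1 \subset \cdots \subset V_n = \mathbb{C}^n, \quad \dim V_k = k,
\]
with $A^{(i)} V_k \subseteq V_k$ for every $i$ and $k$. Equivalently, there exists an invertible $P \in \mathbb{C}^{n\times n}$ such that $T^{(i)} := P^{-1} A^{(i)} P$ is upper triangular for every $i = 1, \ldots, m$. Since the diagonal entries of each $T^{(i)}$ are exactly the eigenvalues of $A^{(i)}$, there exists $\mu > 0$ such that $\Re\bigl(T^{(i)}_{kk}\bigr) \leq -\mu$ for all $i$ and $k$.

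In the coordinates $y = P^{-1} x$, I would propose the candidate $V(y) = \sum_{k=1}^n d_k |y_k|^2$ with positive weights $d_k$ to be chosen. Differentiating along $\dot y = T^{(i)} y$ and using the upper-triangular structure gives
\[
\dot V = 2 \sum_{k=1}^n d_k \Re\bigl(T^{(i)}_{kk}\bigr)\, |y_k|^2 + 2 \sum_{k<j} d_k \Re\bigl(\overline{y_k}\, T^{(i)}_{kj}\, y_j\bigr).
\]
Letting $M := \max_{i,\, k<j} \bigl|T^{(i)}_{kj}\bigr|$ and applying Young's inequality to each cross term, the off-diagonal contribution can be absorbed into the diagonal decay $-2\mu \sum_k d_k |y_k|^2$ whenever the ratios $d_k/d_j$ for $k<j$ are sufficiently small. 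A concrete choice such as $d_k = \epsilon^{-k}$ with $\epsilon > 0$ small enough (depending only on $\mu$, $M$ and $n$) then yields $\dot V \leq -c V$ for some $c > 0$ independent of $i$, and hence independent of the switching law $\sigma$.

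Transporting $V$ back to the original coordinates through the change of basis $P$ produces a common positive-definite quadratic Lyapunov function for $\{A^{(i)}\}_{i=1}^m$, and integrating $\dot V \leq -cV$ together with standard norm equivalence yields the exponential bound $\|x(t)\| \leq \beta \|x(0)\| e^{-ct/2}$ required for GUES. I expect the main obstacle to be the simultaneous weight choice: although each upper-triangular $T^{(i)}$ individually admits a diagonal-weighted Lyapunov function, one must verify that a \emph{single} weight vector $(d_1, \ldots, d_n)$ works for all $m$ subsystems at once. This succeeds precisely because Lie's theorem provides a common triangularizing basis $P$ and because the family is finite, giving the uniform bound $M$ on all relevant off-diagonal entries.
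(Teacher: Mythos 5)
Your proposal is correct and follows essentially the same route the paper takes: Lie's theorem gives the common triangularizing basis (equivalently, the common invariant maximal flag), and the weighted sum of squares $\sum_k d_k |y_k|^2$ with geometrically growing weights chosen to let the diagonal decay absorb the off-diagonal cross terms is exactly the construction of Proposition \ref{prop:clf}, whose condition \eqref{eq:valmax} is the quantitative version of your requirement that the ratios $d_k/d_j$ be sufficiently small. Your closing observation that a single weight vector works for all $m$ subsystems because the family is finite is precisely why the maximum over $i$ appears in \eqref{eq:valmax}.
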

	As shown in \cite{mori1997solution,shorten1998stability}, this result follows from the simultaneous triangularization of the matrices $A^{(i)}$, which is a well-known property of solvable Lie algebras (see Lie's theorem \ref{thm:lie} in Appendix \ref{sec:appendix}). 
	This property is in fact equivalent to the existence of a \emph{common invariant flag} for complex matrices \cite{dubi2009algorithmic}.
	\begin{definition}[Invariant flag]
		An invariant maximal flag of the set of matrices $ \{A^{(i)}\}_{i=1}^m$ is a set of subspaces $\{S_j\}_{j=1}^n \subseteq \mathbb{C}^n$ such that (i) $A^{(i)} S_j\subset S_j$ for all $i,j$, (ii) $\textrm{dim}(S_j)=j$ for all $j$, and (iii) $S_j\subset S_{j+1}$ for all $j<n$.
	\end{definition}
	The subspace $S_j$ can be described through an orthonormal basis $(v_1,\cdots, v_n)$, so that $S_j=\textrm{span}\left\lbrace v_1,\cdots,v_j\right\rbrace$. Note that the vector $v_1$ is a common eigenvector of the matrices $A^{(i)}$. This basis can be used to construct a CLF.
	\begin{proposition}[\cite{zagabe2021switched}]
		\label{prop:clf}
		Let 
		\begin{equation}\label{eq:syst1}
			\left\{\dot{x}=A^{(i)} \,x, \,\, A^{(i)}\in \mathbb{C}^{n\times n}, \, x\in\mathbb{C}^{n} \right\}_{i=1}^m
		\end{equation}
		be a switched linear system. Suppose that all matrices $A^{(i)}$ are stable and admit a common invariant maximal flag
		$$\left\lbrace 0\right\rbrace \subset S_{1} \subset \cdots\subset S_{n}=\mathbb{C}^n, \quad S_j=\textrm{span}\{v_1,\dots,v_j\}.$$
		Then there exist $\epsilon_j >0$, $j=1,\dots,n$, such that 
		\begin{equation}
			\label{eq:Lyap}
			V(x)=\sum_{j=1}^n \epsilon_j \vert v_j^\dagger x\vert^2
		\end{equation}
		is a CLF for (\ref{eq:syst1}).
	\end{proposition}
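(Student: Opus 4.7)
The plan is to exploit the invariant maximal flag to simultaneously triangularize the subsystem matrices in a unitary basis, and then to tune the weights $\epsilon_j$ so that the Hermitian form $\dot V$ becomes negative definite for every subsystem at once. First, I would assemble the orthonormal flag basis into a unitary matrix $U=[v_1|\cdots|v_n]$. The invariance $A^{(i)}S_j\subset S_j$ implies that each column $\tilde A^{(i)} e_j$ of $\tilde A^{(i)}:=U^\dagger A^{(i)}U$ has vanishing entries below row $j$, so every $\tilde A^{(i)}$ is upper triangular, with the eigenvalues $\lambda_j^{(i)}$ of $A^{(i)}$ on the diagonal. Stability of the matrices $A^{(i)}$ then gives $\Re(\lambda_j^{(i)})<0$ uniformly in $i,j$.

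In the coordinates $y=U^\dagger x$, one has $y_j=v_j^\dagger x$, so the candidate Lyapunov function reads
$$V(x)=y^\dagger D\,y,\qquad D=\diag(\epsilon_1,\ldots,\epsilon_n),$$
and is positive definite for any choice of positive weights. Along the $i$-th subsystem, $\dot y=\tilde A^{(i)}y$, and a direct computation using the upper-triangular structure yields
$$\dot V=2\sum_{j=1}^n\epsilon_j\Re\bigl(\lambda_j^{(i)}\bigr)|y_j|^2+2\sum_{j<k}\epsilon_j\Re\bigl(\tilde a_{jk}^{(i)}\bar y_j y_k\bigr),$$
the sum of strictly negative diagonal terms and upper-triangular cross terms. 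The idea is to dominate the cross terms by the diagonal ones via Young's inequality together with a suitable choice of the weights.

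I would then apply, for parameters $\mu_{jk}>0$, the bound $2|\tilde a_{jk}^{(i)}\bar y_j y_k|\le|\tilde a_{jk}^{(i)}|(\mu_{jk}|y_j|^2+\mu_{jk}^{-1}|y_k|^2)$ and regroup the terms by the index $\ell$ of $|y_\ell|^2$. Strict negativity of $\dot V$ then reduces to enforcing
$$\sum_{k>\ell}|\tilde a_{\ell k}^{(i)}|\mu_{\ell k}+\sum_{j<\ell}\frac{\epsilon_j}{\epsilon_\ell}|\tilde a_{j\ell}^{(i)}|\mu_{j\ell}^{-1}<\bigl|\Re(\lambda_\ell^{(i)})\bigr|\qquad\forall\,i=1,\dots,m,\;\forall\,\ell=1,\dots,n.$$
These finitely many inequalities can be satisfied in two stages: first pick every $\mu_{jk}$ so small that the first sum is below $\tfrac{1}{2}\min_{i,\ell}|\Re(\lambda_\ell^{(i)})|$ (this is possible since the number of subsystems and diagonal indices is finite and the eigenvalues are uniformly away from the imaginary axis); then pick $\epsilon_j$ sequentially with rapidly decreasing ratios, for instance $\epsilon_j=\varepsilon^{\,n-j}$ with $\varepsilon>0$ small enough, to push the second sum below the same threshold for every $i,\ell$.

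The main obstacle is precisely this last balancing step: one must produce a single family of weights $\epsilon_j$ that controls the coupling terms for \emph{all} $m$ subsystems simultaneously. The argument goes through because the two kinds of contributions to $\dot V$ decouple — the $\mu_{jk}$ absorb the dependence on the off-diagonal entries $\tilde a_{jk}^{(i)}$, and the hierarchy $\epsilon_1\ll\cdots\ll\epsilon_n$ then uniformly damps the remaining lower-triangular contributions for every subsystem. Combining the resulting negativity of $\dot V$ on $\mathbb{C}^n\setminus\{0\}$ with the positive definiteness of $V$ yields the CLF property for~(\ref{eq:syst1}).
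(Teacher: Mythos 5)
Your proof is correct and follows essentially the same route as the paper's (which is the finite-dimensional template for the proof of Theorem~\ref{thm:principalresult}): triangularize in the orthonormal flag basis, write $\dot V$ as strictly negative diagonal terms plus upper-triangular cross terms, and dominate the latter by the former via a Young-type inequality together with a hierarchical choice $\epsilon_1\ll\cdots\ll\epsilon_n$. The only difference is presentational: the paper allocates fixed fractions of each diagonal term to each cross term and optimizes the resulting quadratic to obtain the explicit iterative bound \eqref{eq:valmax} with the constant $(n-1)^2/4$, whereas you fix the Young parameters and the weights in two qualitative stages without extracting the sharp constant.
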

	The values $\epsilon_j$ must satisfy the condition
	\begin{equation}
		\label{eq:valmax}
		\epsilon_j > \max_{\substack{i\in\{1,\dots,m\} \\ k\in\{1,\dots,j-1\}}} \epsilon_k \, \dfrac{(n-1)^2}{4}\dfrac{\left| v_k^\dagger A^{(i)} v_j\right|^2}{ \left| \Re\left( \tilde{\lambda}_{j}^{(i)}\right)\right| \left|  \Re\left(\tilde{\lambda}_{k}^{(i)}\right)\right| }
	\end{equation}
	where $\tilde{\lambda}_{j}^{(i)}$ are the eigenvalues of $A^{(i)}$. They can be obtained iteratively from an arbitrary value $\epsilon_1>0$. The geometric approach followed in \cite{zagabe2021switched} provides a constructive way to obtain a CLF, a result that we will leverage in an infinite-dimensional setting for switched nonlinear systems.
	
	\subsubsection{Lie-algebraic condition in the nonlinear case}
	
	In the context of switched nonlinear systems, one has to consider the Lie algebra of vector fields
	\begin{equation}\label{Lie_algebra_vec}
		\mathfrak{g}_F=\textrm{span}\left\lbrace F^{(i)}, i=1,\dots,m\right\rbrace_\textrm{Lie}
	\end{equation}
	equipped with the Lie bracket
	\begin{equation}
		\label{eq:liebracket}
		[F^{(i)},F^{(j)}](x)=JF^{(j)}(x) \, F^{(i)}(x)-JF^{(i)}(x) \, F^{(j)}(x).
	\end{equation}
	
	It has been conjectured in \cite{liberzon2004lie} that Lie-algebraic conditions on \eqref{Lie_algebra_vec} could be used to characterize uniform stability. This problem has been solved partially in \cite{sharon2005third} for third-order nilpotent Lie algebras and in \cite{margaliot2006lie} for particular $r$-order nilpotent Lie algebras. Another step toward more general Lie-algebraic conditions based on solvability has been made in \cite{zagabe2021switched}, a preliminary result that relies on the so-called Koopman operator framework. However, the results obtained in \cite{zagabe2021switched} are restricted to specific switched nonlinear systems that can be represented as finite-dimensional linear ones. In this paper, we build on this preliminary work, further exploiting the Koopman operator framework to obtain general conditions that characterize the GUAS property of switched nonlinear systems.
	
	\subsection{Koopman operator approach to dynamical systems}
	\label{sec:Koopman}
	
	In this section, we present the Koopman operator framework, which is key to extend the result of Proposition \ref{prop:clf} to switched nonlinear systems. We introduce the Koopman semigroup along with its Koopman generator, cast the framework in the context of Lie groups, and describe the finite-dimensional approximation of the operator. 
	
	\subsubsection{Koopman operator}
	
	Consider a continuous-time dynamical system
	\begin{equation}\label{eq:systnon}
		\dot{x} = F(x), \qquad x\in X \subset \mathbb{R}^n, \quad F \in \mathcal{C}^1
	\end{equation}
	which generates a flow $\varphi_t:X \rightarrow X$, with $t\in \mathbb{R}^+$. The \emph{Koopman operator} is defined on a (Banach) space $\mathcal{F}$ and acts on observables, i.e. functions $f:X \rightarrow \mathbb{R}$, $f\in \mathcal{F}$.
	\begin{definition}[Koopman semigroup \cite{lasota1998chaos}]\label{def:koopman_sg} 
		The semigroup of Koopman operators (in short, Koopman semigroup) is the family of linear operators $\left(U_t\right)_{t\geq 0
		}$ defined by $$U_t:\mathcal{D}(U_t)\rightarrow \mathcal{F}, \quad U_tf=f\circ \varphi_t$$
	\end{definition}
	with the domain $\mathcal{D}(U_t)=\left\lbrace f\in \mathcal{F}: U_t f\in \mathcal{F} \right\rbrace $.
	We can also define the associated Koopman generator.
	\begin{definition}[Koopman generator \cite{lasota1998chaos}]\label{def:koopman_gen} The Koopman generator associated with the vector field \eqref{eq:systnon} is the linear operator
		\begin{equation}
			\label{eq:koopmangenerator}
			L_F: \mathcal{D}(L_F) \rightarrow \mathcal{F}, \quad L_F f:=F \cdot \nabla f
		\end{equation}
		with the domain $\mathcal{D}(L_F)=\left\lbrace f\in \mathcal{F}: F \cdot \nabla f\in \mathcal{F} \right\rbrace $.
	\end{definition}
	As shown below (see Lemma \ref{lemmepreuve1}), the Koopman semigroup and the Koopman generators are directly related. When the Koopman semigroup is strongly continuous \cite{engel2000one}, i.e. $\lim_{t\rightarrow 0^+} \| U_t f- f \|_{\mathcal{F}}=0$, the Koopman generator is the infinitesimal generator $L_F f:=\lim_{t\rightarrow 0^+}(U_t f- f)/t$ of the Koopman semigroup. Since the Koopman operator $U_t$ and the generator $L_F$ are both linear, we can describe the dynamics of an observable $f \in \mathcal{D}(L_F)$ through the linear abstract ordinary differential equation
	\begin{equation}\label{eq:systnoninf}
		\dot{f}=L_F f.
	\end{equation}
	
	We can also briefly discuss the spectral properties of the Koopman operator.
	\begin{definition}[Koopman eigenfunction and eigenvalue \cite{budivsic2012applied, mezic2005spectral}]
		An eigenfunction of the Koopman operator is an observable $\phi_{\lambda}\in \mathcal{F}\setminus \left\lbrace 0\right\rbrace $ such that 
		$$L_F \phi_\lambda =\lambda \phi_\lambda.$$
		The value $\lambda \in \mathbb{C}$ is the associated Koopman eigenvalue.
	\end{definition}
	%Under the strong continuity property, the Koopman eigenfunction also satisfies
	$$U_t \phi_\lambda =e^{\lambda t}\phi_\lambda, \quad \forall t\geq 0.$$
	For a linear system $\dot{x}=Ax$, with $x\in \mathbb{R}^n$, we denote an eigenvalue of $A$ by $\tilde{\lambda}_j$ and its associated left eigenvector by $w_j$. Then $\tilde{\lambda}_j$ is a Koopman eigenvalue and the associated Koopman eigenfunction is given by $\phi_{\tilde{\lambda}_j}(x)=w_j^\dagger x$ \cite{mezic2013analysis}. For a nonlinear system of the form \eqref{eq:systnon} which admits a stable equilibrium $x_e$, the eigenvalues of $JF(x_e)$ are typically Koopman eigenvalues and the associated eigenfunctions are the so-called \emph{principal Koopman eigenfunctions}.% (see Remark \ref{rem:spectral} below).
	
\subsubsection{Koopman operator on reproducing kernel Hilbert spaces}
		From this point on, we will define the Koopman operator on reproducing kernel Hilbert spaces (RKHS). In particular cases, we will use the specific (weighted) Hardy space on the polydisk (or on the hypercube). 
		%Note that the Koopman operator has also been studied on more general reproducing kernel Banach spaces in \cite{ikeda2022koopman}.
		%\subsubsection*{Reproducing kernel Hilbert space (RKHS)
		
		For more details, we refer the reader to the work \cite{ikeda2022koopman} where the Koopman operator is studied in general RKHS setting, \cite{rosenfeld2022dynamic,rosenfeld2019occupation,rosenfeld2019occupation2} where the Koopman generator (called Liouville operator) is studied on particular RKHSs. %\com{Est-ce que ces références ne devraient pas être remontées plus haut dans la section?} 
		
			\begin{definition}[Reproducing kernel Hilbert space]
				A \textit{reproducing kernel Hilbert space} (RKHS) $\mathcal{H}(X)$ on a set $X$ is a Hilbert space of functions from $X$ to $\mathbb{C}$ or $\mathbb{R}$ such that,  for every $x\in X$, the linear evaluation functional $E_x:\mathcal{H}(X)\longrightarrow \mathbb{C}$ or $\mathbb{R}$, defined by $E_x(f)=f(x)$, is bounded.
			\end{definition}
			%We will denote by $\{e_\alpha,\, \alpha \in \mathbb{N}^n\}$ an orthonormal basis on the RKHS  $\mathcal{H}(X)$.
			A direct consequence of this definition is that, for each $x\in X$, there exists a unique $k_x \in \mathcal{H}(X) $ such that $E_x(\cdot)=\left\langle k_x, \cdot \right\rangle $ (Riesz representation theorem).
			\begin{definition}[Kernel for RKHS]
				The function $k_x$ is called  the \textit{reproducing kernel} for $x$ (or the \textit{evaluation functional} at $x$, with a slight abuse of language) and the function $k:X\times X \longrightarrow \mathbb{C}$ or $\mathbb{R}$, defined by $k(x,y)=k_y(x)$, is called the reproducing kernel for  $\mathcal{H}(X)$.
			\end{definition}

		\subsubsection*{Koopman operator on a general RKHS} 
		%\com{Cette sous-section devrait se limiter aux RKHS généraux.}
		In order to define the Koopman operator on the RKHS  $\mathcal{H}(D)$, where  $D\subseteq X$, we will make the following assumption.
			\begin{assumption}\label{assumption:general}$\,$
				\begin{enumerate}
					%\item[(i)] The RKHS  $\mathcal{H}(D) $ is composed by analytic functions on $D$. \com{Je ne le mettrais pas comme hypothèse, car c'est un choix que l'on fait. Et encore, ce choix ne devrait être fait que dans la sous-section suivante.}
					\item[(i)]	The components $F_\ell,\, \ell=1,\cdots, n$, of the vector field $F$ belong to  $\mathcal{H}(D) $.
					\item[(ii)] There exists an orthonormal basis $\{e_k\}_{k=0}^\infty$ of $\mathcal{H}(D) $ such that \mbox{$ F_\ell\dfrac{\partial e_k }{\partial z_\ell} \in \mathcal{H}(D)$} for all $\ell=1,\cdots, n$ and all $k \in \mathbb{N}$.
					\item[(iii)] The vector field $F$ generates a flow which maps $D$ to $D$ (forward invariance).
				\end{enumerate}
			\end{assumption}
			If the above assumptions are satisfied, the Koopman semigroup $\left(U_t\right)$ and the Koopman generator can be defined according to Definitions \ref{def:koopman_sg} and \ref{def:koopman_gen}, with the space of observable functions $\mathcal{F}=\mathcal{H}(D)$.
			%The corollaries of our main result in Section \ref{sec:mainresult}  will be particulized to a specific RKHS : the Hardy space on the polydisc or the hypercube.
			%If the vector field $F$ is real analytic, we can consider its analytic continuation on $D$. 
			While we assume that the vector field belongs to $\mathcal{H}(D)$, we do not require that it generates a flow that belongs to $\mathcal{H}(D)$. This implies that the Koopman semigroup might not be defined everywhere on $\mathcal{H}(D)$. As shown below, we will only consider its adjoint $U_t^*$ acting on evaluation functionals.

		Now, we recall some important properties that we will use to prove our results.
		\begin{lemma}[General properties of Koopman operator on RKHS]\label{lemmepreuve1}
			Consider a function $f\in \mathcal{H}(D)$, an orthonormal basis $ \{e_k, \, k \in \mathbb{N}\}$ of $\mathcal{H}(D)$  and an evaluation functional $k_z$, with $z\in D$. Assume that Assumption \ref{assumption:general} holds. Then,
			\begin{enumerate}
				\item $ L_F e_k \in \mathcal{H}(D)$ and the domain $\mathcal{D}\left(L_F\right)$ is dense in $\mathcal{H}(D)$,
				\item $ U_t^* k_z = k_{\varphi_t(z)}$,
				\item $\dfrac{d}{dt} \left\langle U_t^* k_z, f \right\rangle = \left\langle L_{F}^*U_t^* k_z, f \right\rangle$.
			\end{enumerate}
		\end{lemma}
		\begin{proof}
			\begin{enumerate}
				\item For all $z\in D$, we have \begin{equation*}
					L_F e_k(z) =F(z) \cdot\nabla e_k (z)
					= \sum_{\ell=1}^n F_\ell(z) \, \dfrac{\partial e_k (z)}{\partial z_\ell}.
				\end{equation*}
				%Since $\|f e_\alpha\|\review{\leq}\|f\|$ for all $f\in \mathcal{H}(D)$ and for all $\alpha\in \mathbb{N}^n$, 
				It follows from Assumption \ref{assumption:general} (ii) that 
			\[\left\| L_F e_k\right\|= \left\| \sum_{\ell=1}^n  F_\ell  \, \dfrac{\partial e_k}{\partial z_\ell}\right\|\leq \sum_{\ell=1}^n \left\|  F_\ell  \dfrac{\partial e_k}{\partial z_\ell}\right\|<\infty.\] 
				Moreover $\mathcal{D}\left(L_F\right)$ is dense in $\mathcal{H}(D)$ since  $\{e_k\}_{k=0}^\infty$ is a complete basis. 
				%\item
				\item  For all $f\in \mathcal{H}(D)$, we have $$ \left\langle U_t^* k_z,f\right\rangle  = \left\langle k_z,U_tf\right\rangle  =\left( U_t f\right) (z)$$ and 
				$$ \left\langle  k_{ \varphi_t (z)},f\right\rangle  = f\left(\varphi_t (z)\right) =\left( U_t f\right) (z),$$
				so that $$  U_t^*k_z=k_{\varphi_t (z)}.$$
				\item  For all $z\in \mathcal{D}$ and all $f\in \mathcal{D}(L_F)$,
				\begin{eqnarray*}
					\dfrac{d}{dt} \left\langle U_t^* k_z, f\right\rangle &=& \dfrac{d}{dt}\left\langle  k_{\varphi_t (z)}, f\right\rangle \\
					&=& \dfrac{d}{dt} f\circ \varphi_t(z) \\
					&=& F\left( \varphi_t(z)\right).\nabla  f\left( \varphi_t(z)\right) \\
					&=& \left\langle   k_{\varphi_t(z)}, L_F f\right\rangle\\
					&=& \left\langle  L_{F}^* U_t^* k_z , f\right\rangle.
				\end{eqnarray*}
				The result follows for all $f$ since $\mathcal{D}(L_F)$ is dense in $\mathcal{H}(D)$.
			\end{enumerate}
		\end{proof}
		In the previous lemma, the second property is a  well-known property of the composition operator on a  RKHS. The third property is also known in the context of strongly continuous semigroup theory (see \cite{engel2000one}).
		Note that, as mentioned in \cite{russo2022liouville} Proposition 2 for spaces of analytic functions, the Koopman generator $L_F$ is densely defined only for a restricted form of the vector field $F$. In our case, this property is guaranteed by Assumption \ref{assumption:general} (ii).

		%\textcolor{red}{EST-CE ENCORE IMPORTANT???
			%\begin{remark}[Spectral properties]
			%	\label{rem:spectral}
			%	If Assumption \ref{assump_hyperbolic} holds and if the eigenvalues $\tilde{\lambda}_j$ are non-resonant\footnote{The eigenvalues $\tilde{\lambda}_j$ are non-resonant if $\sum_{j=1}^n \alpha_j \tilde{\lambda}_j = 0$ with $\alpha \in \mathbb{Z}^n$ implies that $\alpha=0$.}, then the Poincaré linearization theorem \cite{arnold2012geometrical} implies that the flow $\varphi_t$ is locally topologically conjugated to the linear flow $\tilde{\varphi}_t(z)=e^{JF(0) t} z$, i.e. in some neighborhood of the origin, there exists a bi-holomorphic map $h$ such that $\varphi_t= h^{-1} \circ \tilde{\varphi}_t \circ h$. In this case, the flow $\varphi_t$ is holomorphic. Moreover, the components of $h$ are holomorphic Koopman eigenfunctions $\phi_{\tilde{\lambda}_j}$ associated with the eigenvalues $\tilde{\lambda}_j$ \cite{mezic2005spectral,gaspard1995spectral,mauroy2013isostables}. These eigenfunctions are called principal eigenfunctions. Also, it can easily be shown that, for all $\alpha \in \mathbb{N}^n$, $\sum_{j=1}^n \alpha_j \tilde{\lambda}_j$ is a Koopman eigenvalue associated with the eigenfunction $\phi_{\tilde{\lambda}_1}^{\alpha_1} \cdots \phi_{\tilde{\lambda}_n}^{\alpha_n}$.
			%\end{remark}
			%}
		
		\paragraph*{Koopman infinite matrix} Since $\mathcal{H}(D)$ is assumed to possess an orthonormal basis, it is isomorphic to $l^2$ and the Koopman generator can be represented by the  Koopman infinite matrix
		
		\begin{equation}\label{eq:koopmatrix0}
			\bar{L}_F=\begin{pmatrix}
				\left\langle L_Fe_0,e_0  \right\rangle  & \left\langle L_Fe_1,e_0  \right\rangle  &  \left\langle L_Fe_2,e_0  \right\rangle&  \left\langle L_Fe_3,e_0  \right\rangle &  \cdots \\ 
				\left\langle L_Fe_0,e_1  \right\rangle  & \left\langle L_Fe_1,e_1  \right\rangle  &  \left\langle L_Fe_2,e_1  \right\rangle&  \left\langle L_Fe_3,e_1  \right\rangle &  \cdots \\ 
				\left\langle L_Fe_0,e_2  \right\rangle  &\left\langle L_Fe_1,e_2 \right\rangle  &  \left\langle L_Fe_2,e_2  \right\rangle  &  \left\langle L_Fe_3,e_2  \right\rangle &  \cdots \\ 
				\left\langle L_Fe_0,e_3  \right\rangle  & \left\langle L_Fe_1,e_3  \right\rangle  &  \left\langle L_Fe_2,e_3  \right\rangle &  \left\langle L_Fe_3,e_3  \right\rangle  &  \cdots \\ 
				\left\langle L_Fe_0,e_4  \right\rangle  & \left\langle L_Fe_1,e_4  \right\rangle  &  \left\langle L_Fe_2,e_4  \right\rangle  &  \left\langle L_Fe_3,e_4  \right\rangle &  \cdots \\  
				\vdots & \vdots & \vdots & \vdots & \cdots
			\end{pmatrix},
		\end{equation}
		where the $k$th column contains the components of $L_F e_k$ in the orthonormal basis $\{e_k\}_{k=0}^\infty$. For $f=\sum_{k \in \mathbb{N}} f_k e_k$, we also have that
		\[\langle L_F f, e_j \rangle = \sum_{k \in \mathbb{N}} f_k \langle L_F e_k, e_j \rangle.\]
		\subsubsection*{Koopman operator on a RKHS of analytic functions} %\com{Mettre dans cette sous-section tout ce qui concerne les RKHS de fonctions analytiques (avec des fonctions de base monomiales), avec comme cas principal l'espace de Hardy.}
		
	In this paper, we focus on analytic vector fields, so that we will naturally consider RKHSs of analytic functions on a bounded domain $D \subset \mathbb{C}^n$. In this case, for a proper choice of real weighting coefficients $\omega_\alpha\geq 0,\,  \alpha \in \mathbb{N}^n$, the family of weighted monomials $\{e_\alpha(z)=\sqrt{\omega_\alpha} z^\alpha,\, \alpha \in \mathbb{N}^n\}$ is an orthonormal basis. Famous examples include the \textit{Hardy space} on the polydisk with $\omega_\alpha=1$ (see below), the\textit{ Bergman space} on the polydisk with $\omega_\alpha=|\alpha|+1$, and the \textit{Segal-Bargmann} (or \textit{Fock-Bargmann}) space on  $\mathbb{R}^n$ with $\omega_\alpha=1/\alpha !$. 
			%In these cases, we have $\|z^\alpha\|=1/\sqrt{\omega_\alpha}$ when $\omega_\alpha\neq 0$.
			Other examples can be constructed by considering power series on $D$ (see \cite{paulsen2016introduction} Theorem 4.12 or \cite{steinwart2008support} Lemma 4.8).
		
	For RKHS where the orthonormal basis is given by weighted monomials $e_\alpha(z)=\sqrt{\omega_\alpha}z^\alpha$ with $\omega_\alpha \neq 0$, the monomials will be denoted by $e_{k}(z)= \sqrt{\omega_{\alpha(k)}} z^{\alpha(k)}$,
		where the map $\alpha:\mathbb{N} \to \mathbb{N}^n$, $k\mapsto \alpha (k)$ defines a lexicographic order $\prec$ on the monomials, i.e. $e_{k_1}\prec e_{k_2}$ if $|\alpha(k_1)|<|\alpha(k_2)|$, or if $|\alpha(k_1)|=|\alpha(k_2)|$ and $\alpha_j(k_1)>\alpha_j(k_2)$ for the smallest $j$ such that $\alpha_j(k_1)\neq\alpha_j(k_2)$.  In other words, we have $1\prec z_1\prec z_2\prec\cdots\prec z_n$ and, if $e_{k_1}\preceq e_{k_2}$ then $e_{k_1}e_{k_3}\preceq e_{k_2}e_{k_3}$ for any other monomial  $e_{k_3}$. For example, we have for two-dimensional monomials $1\prec z_1\prec z_2\prec z_1^2\prec z_1z_2\prec z_2^2\prec z_1^3\prec z_1^2z_2\prec z_1z_2^2\prec z_2^3\prec z_1^4\prec z_1^3z_2\prec z_1^2z_2^2\prec z_1z_2^3\prec z_2^4\prec \cdots $.
			%to compare $e_{k_1}(z)=z_1^{\alpha_1(k_1)}z_2^{\alpha_2(k_1)}$ and $ e_{k_2}(z)=z_1^{\alpha_1(k_2)}z_2^{\alpha_2(k_2)}$  we have $z_1^{\alpha_1(k_1)}z_2^{\alpha_2(k_1)}<z_1^{\alpha_1(k_2)}z_2^{\alpha_2(k_2)}$ if their total degrees verify $|\alpha(k_1)|=\alpha_1(k_1)+\alpha_2(k_1)<\alpha_1(k_2)+\alpha_2(k_2)=|\alpha(k_2)|$ or if $|\alpha(k_1)|=|\alpha(k_2)|$ and $\alpha_1(k_1)>\alpha_1(k_2)$.  

			\begin{remark}
				\label{rem:assmp2}
				In the particular case of RKHSs with an orthonormal basis of weighted monomials,
				Assumption \ref{assumption:general}(ii) directly follows from Assumption \ref{assumption:general}(i) (and Lemma \ref{lemmepreuve1}.1) since the derivatives $\partial e_\alpha /\partial z_\ell$ are monomials belonging to $\mathcal{H}(D)$. This implies that $\partial e_\alpha /\partial z_\ell f \in \mathcal{H}(D)$ for all $f\in \mathcal{H}(D)$, including the case $f=F_l \in \mathcal{H}(D)$.  
		\end{remark}

	The Koopman (or composition) operator has been extensively studied in the Hardy space on the polydisk (see e.g. \cite{rudin1969function,rudin2008function,shapiro2012composition} for more details). This choice of space is well-suited to the case of analytic vector fields that admit a stable hyperbolic equilibrium (see Assumption \ref{assump_hyperbolic}), where it allows to exploit convenient spectral properties of the operator. In particular, the standard orthonormal basis of unweighted monomials can be leveraged in this case and lead to practical corollaries of our main result.
		
		%Let $\mathbb{D}$ be the open  unit disk in $\mathbb{C}$, $\partial \mathbb{D}$ its boundary, and $\mathbb{D}^n$ the unit \emph{polydisk}  in $\mathbb{C}^n$.
		Since we will focus on the Hardy space as a particular case of our main result, it is described below in details. The Hardy space of analytic functions on the polydisk $\mathbb{D}^n$ is the space
		\[\mathbb{H}^2(\mathbb{D}^n)=\left\lbrace f:\mathbb{D}^n\rightarrow \mathbb{C}, \mbox{holomorphic}:\|f\|^2<\infty\right\rbrace, \]
		where 
		\[ \|f\|^2=\lim_{r\rightarrow 1^-} \int_{(\partial \mathbb{D})^n}\vert f\left(r\omega\right)\vert^2dm_n(\omega)\]
		and $m_n$ is the normalized Lebesgue measure on $(\partial \mathbb{D})^n$. 
		The space is equipped with an inner product defined by 
		\[\left\langle f,g\right\rangle=\int_{(\partial \mathbb{D})^n}f\left(\omega\right) \bar g\left(\omega\right) dm_n(\omega),\]
		so that the set of  monomials $\left\lbrace z^\alpha: \, z\in \mathbb{D}^n,\,  \alpha \in  \mathbb{N}^n\right\rbrace$ is a standard orthonormal basis on $\mathbb{H}^2(\mathbb{D}^n)$.
		%The monomials will be denoted by $e_{k}(z)= z^{\alpha(k)}$, where the map $\alpha:\mathbb{N} \to \mathbb{N}^n$, $k\mapsto \alpha (k)$ refers to the lexicographic order, i.e. $e_{k_1}<e_{k_2}$ if $|\alpha(k_1)|<|\alpha(k_2)|$, or if $|\alpha(k_1)|=|\alpha(k_2)|$ and $\alpha_j(k_1)>\alpha_j(k_2)$ for the smallest $j$ such that $\alpha_j(k_1)\neq\alpha_j(k_2)$. 
		
		For $f$ and $g$ in $\mathbb{H}^2(\mathbb{D}^n) $, with $f=\sum_{k \in \mathbb{N}} f_k e_k$ and $g=\sum_{k \in \mathbb{N}} g_k e_k$, the isomorphism 
		$$\sum_{k \in \mathbb{N}} f_k \, e_k \mapsto (f_k)_{k \geq 0}$$  
		between $\mathbb{H}^2(\mathbb{D}^n)$ and the $l^2$-space allows to rewrite the norm and the inner product as
		\[\| f\|^2=\sum_{k \in \mathbb{N}} \vert f_k \vert^2 \quad \mbox{and}\quad \left\langle f,g\right\rangle=\sum_{k \in \mathbb{N}}  f_k \, \bar g_k.\]
		
		We also note that $\mathbb{H}^2(\mathbb{D}^n)$ is a reproducing kernel Hilbert space (RKHS) with the  kernel (\cite[Chapter 1]{rudin1969function})
		\begin{equation}\label{eq:cauchy_kernel}
			k\left(z, \xi\right)=\prod_{i=1}^n\dfrac{1}{1-\bar{z}_i \xi_i},\, \, z,\xi \in  \mathbb{D}^n.
		\end{equation}
		It follows that one can define the (bounded) \emph{evaluation functional}
		$f(z)=\left\langle f, k_z\right\rangle %=\int_{(\partial U)^n}f\left(\omega\right) \bar k_z\left(\omega\right) dm_n(\omega),
		$
		with $k_z( \omega)=k\left(z, \omega\right)$.
		%\com{Ajouter ici les références Koopman + RKHS (Corbi, etc.)}

		Let $\mathbb{X}^n$ be the (real) subset of $\mathbb{D}^n$, i.e., $\mathbb{X}^n$ is the interior of the $n$-(hyper)cube with edges of length $2$. One can define the Hardy space of functions $f \in \mathbb{D}^n$ restricted to $\mathbb{X}^n$ (here the restriction is obtained by considering the real part of $z\in\mathbb{D}^n$), which we denote by $\mathbb{H}^2(\mathbb{X}^n)$. In this case, the norm, inner product, orthonormal basis, and kernel defined above on $\mathbb{H}^2(\mathbb{D}^n)$ are still valid on $\mathbb{H}^2(\mathbb{X}^n)$ (see e.g. \cite[Corollary 5.8]{paulsen2016introduction} or \cite[Lemma 4.3]{steinwart2008support}  for this kind of construction of RKHS).

		\begin{remark} In the case of the Hardy space, Assumptions \ref{assump_hyperbolic} and \ref{assumption:general} are imposed on the unit polydisk (or hypercube) without loss of generality. They could be imposed on a larger or smaller polydisk $\mathbb{D}^n(\rho)$ (or hypercube $\mathbb{X}^n(\rho)$), with $\rho\neq 1$, and then re-expressed on the unit polydisk (or hypercube) through a proper rescaling of the state variables. The same is true for the stability results derived throughout the paper, which are also valid on the unit polydisk without loss of generality.
		\end{remark}

		When the vector field generates a holomorphic flow that is invariant in $\mathbb{D}^n$ (or $\mathbb{X}^n$), we can define the Koopman semigroup on $\mathbb{H}^2(\mathbb{D}^n)$ (or $\mathbb{H}^2(\mathbb{X}^n)$).
		However, we will not impose here the flow to be holomorphic on $\mathbb{D}^n$, mainly because this is a strong condition that restricts the form of the vector field (see the works \cite{chen2016parametric, contreras2011semigroups}), and therefore the applicability of our results. In addition, according to \cite{russo2022liouville}, strong properties of the Koopman generator and semigroup (e.g. boundedness, compactness, etc.) cannot be assumed, unless the dynamics is linear.
			However, as shown in Remark \ref{rem:assmp2}, the Koopman generator $L_F$ is a densely defined operator on  $\mathbb{H}^2(\mathbb{D}^n)$.  Furthermore, since the Hardy space  is composed by analytic functions on $\mathbb{D}^n$ and the domain of the Koopman operator $L_F$ is nontrivial by Assumption \ref{assumption:general}, then $L_F$  is a closed operator on  $\mathbb{H}^2(\mathbb{D}^n)$ (see \cite{rosenfeld2019occupation} Theorem  4.2). %\com{Il faudrait donner un peu plus de détails.}
		
	\paragraph*{Koopman infinite matrix} Now, we derive the form of the Koopman matrix in the case of a basis of monomials. We first note that $L_F e_0=0$ so that the first row and column of the Koopman matrix $\bar{L}_F$ contain only zero entries. By removing them, one obtains the representation of the restriction of the Koopman generator to the subspace of functions $f$ that satisfy $f(0)=0$. This subspace is spanned by the basis $(e_k)_{k\geq 1}$. Note that $k_z-k_0$ belongs to this subspace, since \eqref{eq:cauchy_kernel} implies that $k_z(0)-k_0(0)=0$.
		
		The entries of the Koopman matrix \eqref{eq:koopmatrix0} can be expressed in term of the coefficients of the vector field $F$. Indeed, for $F_\ell(z) = \sum_{\vert \beta \vert\geq 1} a_{\ell,\beta} z^\beta$, the action of the Koopman operator on a monomial is given by 
		\begin{eqnarray}
			L_F z^\alpha &=& \sum_{\ell=1}^n F_\ell(z) \, \alpha_\ell \, z^{(\alpha_1,\dots,\alpha_{\ell-1}, \alpha_\ell-1,\alpha_{\ell+1},\dots,\alpha_n)}\nonumber\\
			&=& \sum_{\ell=1}^n \sum_{\vert \beta \vert\geq 1} a_{\ell,\beta} z^\beta \, \alpha_\ell \, z^{(\alpha_1,\dots,\alpha_{\ell-1}, \alpha_\ell-1,\alpha_{\ell+1},\dots,\alpha_n)}\nonumber\\
			&=&  \sum_{\ell=1}^n \alpha_\ell \left( \sum_{ \left(\beta_1,\dots,\beta_n\right)\in \mathbb{N}^n } a_{\ell,\left(\beta_1,\dots,\beta_n\right)}   z^{(\beta_1+\alpha_1,\dots, \beta_\ell+\alpha_\ell-1,\beta_{\ell+1}+\alpha_{\ell+1},\dots,\beta_n+\alpha_n)}\right).\nonumber   
		\end{eqnarray}
		By setting $ \gamma_1=\beta_1+\alpha_1, \dots ,\gamma_\ell=\beta_\ell+\alpha_\ell-1 ,\dots, \gamma_n=\beta_n+\alpha_n$, we obtain
		\begin{eqnarray}\label{eq:koopman_on_monom}
			L_F z^\alpha   
			&=& \sum_{\ell=1}^n \alpha_\ell \sum_{ |\gamma |\geq |\alpha|} a_{\ell,\left(\gamma_1-\alpha_1,\dots,\gamma_\ell-\alpha_\ell+1,\dots,\gamma_n-\alpha_n\right)}   z^{(\gamma_1,\gamma_2,\dots,\gamma_n)}\nonumber\\
			& =& \sum_{\ell=1}^n \alpha_\ell \sum_{ |\gamma |\geq |\alpha|} a_{\ell,(\gamma-\alpha)_\ell}   z^{\gamma},
		\end{eqnarray}
		where we assume by convention that $a_{\ell,\alpha}=0$ if $\alpha$ contains a negative component, and where we denote 
		\begin{equation}
			\label{eq_not_index_l}
			(\gamma-\alpha)_\ell=\left(\gamma_1-\alpha_1,\cdots,\gamma_\ell-\alpha_\ell+1,\cdots,\gamma_n-\alpha_n\right).
		\end{equation}
		It follows that the entries of \eqref{eq:koopmatrix0} are given by
		%\begin{equation}\label{eq:koop_matrix1}
		%\left\langle L_Fe_{k},e_{j} \right\rangle = %\left\langle e_\alpha, L_F^*e_\gamma \right\rangle 
		%\begin{cases}\sum_{\ell=1}^n \alpha_\ell(k)  \, a_{\ell,(\alpha(j)-\alpha(k))_\ell} & \textrm{if } |\alpha(j) |\geq |\alpha(k)| \\
		%0 &  \textrm{if } |\alpha(j) |< |\alpha(k)|.
		%\end{cases}
		%\end{equation}
		\begin{equation}\label{eq:koop_matrix1_hardy}
			\left\langle L_Fe_{k},e_{j} \right\rangle = %\left\langle e_\alpha, L_F^*e_\gamma \right\rangle 
			\begin{cases}\sum_{\ell=1}^n \alpha_\ell(k)  \, a_{\ell,(\alpha(j)-\alpha(k))_\ell}  & \textrm{if } |\alpha(j) |\geq |\alpha(k)| \\
				0 &  \textrm{if } |\alpha(j) |< |\alpha(k)|.
			\end{cases}
		\end{equation}
		
		In the case of other RKHS of analytic functions with a basis of weighted monomials, the entries are still given by the above formulas, but the off-diagonal entries are multiplied by the ratio $\sqrt{\omega_{\alpha(k)}}/\sqrt{\omega_{\alpha(j)}}$.
		
		\begin{remark}\label{rem:linear_vector_field}
			For the linear part of the vector field $F$, where $|\alpha(j)|=1$, $j=1,\cdots,n$, it is clear that $\alpha(j)$ is the canonical basis vector of $\mathbb{C}^n$, i.e. $\alpha_i(j)=\delta_{ij}$, and we have that $a_{\ell,\alpha(j)}=[JF(0)]_{\ell j}$.
			Also, if $|\alpha(j)|=|\alpha(k)|$, we have that $(\alpha(j)-\alpha(k))_\ell=\alpha(r)$ for some $r\leq n$ (i.e. $|\alpha(r)|=1$), with $\alpha_r(j)=\alpha_r(k)+1$, $\alpha_\ell(j)=\alpha_\ell(k)-1$, and $\alpha_i(j)=\alpha_i(k)$ for all $i\notin \{l,r\}$. Then, it follows from \eqref{eq:koop_matrix1_hardy} that

			\begin{equation}\label{eq:total_degree_bloc_hardy}
				\left\langle L_Fe_{k},e_{j} \right\rangle =  
				\begin{cases}\sum_{\ell=1}^{n}\, \alpha_\ell(j) \left[JF(0)\right]_{\ell\ell}  & \textrm{if } j=k \\
					\alpha_{\ell}(k) \left[JF(0)\right]_{\ell r}  & \textrm{if } \alpha(j)=(\alpha_1(k),\cdots, \alpha_{\ell}(k)-1,\cdots, \alpha_{r}(k)+1, \cdots,\alpha_n(k)),\\
					0 &  \textrm{otherwise }.
				\end{cases}
			\end{equation}

		\end{remark}

		\subsubsection{Switched Koopman systems and Lie-algebraic conditions}
		
		In the case of a switched nonlinear system \eqref{eq:switch}, the Koopman operator description yields
		a switched linear infinite-dimensional system (in short, \emph{switched Koopman system}) of the form
		\begin{equation}\label{eq:switch_inf}
			\left\lbrace \dot{f}=L_{F^{(i)}}f, \, f\in  \mathcal{D} \right\rbrace_{i=1}^m
		\end{equation}
		with $\mathcal{D}=\cap_{i=1}^m \mathcal{D}(L_{F^{(i)}})$. Similarly, the Lie algebra $\mathfrak{g}_F$ spanned by $F^{(i)}$ (see \eqref{Lie_algebra_vec}) is replaced by $\mathfrak{g}_\ell=\textrm{span}\left\lbrace L_{F^{(i)}}, i=1,\dots,m\right\rbrace_\textrm{Lie}$, equipped with the Lie bracket
		\begin{equation*}
			\left[ L_{F^{(i)}}, L_{F^{(j)}}\right]=L_{F^{(i)}} L_{F^{(j)}} - L_{F^{(j)}} L_{F^{(i)}} \,.
		\end{equation*}
		In particular, we have the well-known relationship
		\begin{equation}
			\label{eq:liebracket_relation}
			\left[ L_{F^{(i)}}, L_{F^{(j)}}\right] =L_{[F^{(i)},F^{(j)}]}
		\end{equation}
		so that the two algebras $\mathfrak{g}_F$ and $\mathfrak{g}_\ell$ are isomorphic. It follows that Lie-algebraic conditions in $\mathfrak{g}_F$ can be recast into Lie-algebraic criteria in $\mathfrak{g}_\ell$, a framework where we can expect to obtain new results on switched systems that are reminiscent to the linear case. In particular, since the solvability property of $\mathfrak{g}_F$ is equivalent to the solvability property of $\mathfrak{g}_\ell$, we will investigate whether this latter condition implies the existence of a common Lyapunov functional for the switched Koopman system \eqref{eq:switch_inf}.

		\section{Main result}
		\label{sec:mainresult}
		
		This section presents our main result. We first use an illustrative example to show that Lie's theorem \ref{thm:lie} cannot be used for nonlinear vector fields, in contrast to the linear case (see Proposition \ref{prop:clf}). We then relax the algebraic conditions suggested in \cite{liberzon2004lie}  in order to obtain a triangular form in the Koopman matrix representation \eqref{eq:koopmatrix0}, a property which is equivalent to the existence of an invariant flag for the adjoint operator $L_F^*$. We finally prove uniform stability of switched nonlinear systems under these conditions.
		
		\subsection{A first remark on the existence of the common invariant flag}
		
		The following example shows that Lie's theorem does not hold for infinite-dimensional switched Koopman systems.
		
		\begin{example}\label{example_flag}
			Consider the two vector fields
			$$F^{(1)}(x_1,x_2)=(-\alpha x_1,-\alpha x_2)\quad \textrm{and} \quad  F^{(2)}(x_1,x_2)=(-\beta x_1+ \gamma \left( x_1^2-x_2^2\right),-\beta x_2+2 \gamma x_1x_2),$$
			where $\alpha, \beta$ and $\gamma$ are real parameters.
			These two vector fields generate the Lie algebra $\mathfrak{g}=\textrm{span}\left\lbrace F^{(1)},F^{(2)},F^{(3)}\right\rbrace_{Lie}$ with $F^{(3)}(x_1,x_2)=(\alpha \gamma (x_1^2-x_2^2),2\alpha \gamma x_1x_2)$ since $[F^{(1)},F^{(2)}]=F^{(3)}$, $[F^{(1)},F^{(3)}]=\alpha F^{(3)}$ and $[F^{(2)},F^{(3)}]=\beta F^{(3)}$. Moreover, one has $\mathfrak{g}^1=[\mathfrak{g}, \mathfrak{g}]=\textrm{span}\left\lbrace F^{(3)}\right\rbrace_{Lie}$ and $\mathfrak{g}^2=[\mathfrak{g}^1, \mathfrak{g}^1]=0$, which implies that $\mathfrak{g}$ is a solvable Lie algebra. However, the Koopman generators $L_{F^{(1)}}$ and $L_{F^{(2)}}$ associated with the two vector fields do not share a common eigenfunction, and therefore cannot have a common invariant flag. Indeed, the principal eigenfunctions of $L_{F^{(1)}}$ are $\phi_{\tilde{\lambda}_1^{(1)}}(x_1,x_2)=x_1$ and $\phi_{\tilde{\lambda}_2^{(1)}}(x_1,x_2)=x_2$, while those of $L_{F^{(2)}}$ are given by
			$$\phi_{\tilde{\lambda}_1^{(2)}}(x_1,x_2)= \dfrac{ \beta \gamma \left(  \beta x_1- \gamma \left( x_1^2- x_2^2\right) \right)}{( \beta-\gamma x_1)^2+\gamma^2x_2^2} \quad \textrm{and} \quad \phi_{\tilde{\lambda}_2^{(2)}}(x_1,x_2)= \dfrac{ \beta^2 \gamma x_2}{( \beta-\gamma x_1)^2+\gamma^2x_2^2}.$$
		\end{example}
		
		We conclude that Lie's theorem \ref{thm:lie} does not hold setting for the above example, so that we cannot directly extend Proposition \ref{prop:clf} to this case. The two Koopman generators are not simultaneous triangularizable and do not have a common invariant flag (see \cite{katavolos1990simultaneous} for more details about simultaneous triangularization of operators and its connection to the existence of an invariant infinite maximal flag). However, it can be easily seen that the Koopman infinite matrices \eqref{eq:koopmatrix0} related to the vector fields $F^{(1)}$ and $F^{(2)}$ are both lower triangular, and therefore admit a common infinite invariant maximal flag, when they are represented in a RKHS of analytic functions.
		In fact, this implies that the \emph{adjoint} operators $L_{F^{(i)}}^*$ have a common invariant flag generated by the orthonormal basis $e_k$, i.e. $S_k=\textrm{span}\{e_1,\dots,e_k\}$. Indeed, $L_{F^{(i)}}^* e_{j} \in \textrm{span}\{e_1,\dots,e_j\}$ since $\left\langle e_{k}, L_{F^{(i)}}^* e_{j} \right\rangle=0$ for all $k>j$. For this reason, we will depart from the solvability condition on vector fields (i.e. on Koopman generators), and we will impose instead the existence of a common invariant flag for the adjoints of Koopman generators.
		
		\begin{assumption}\label{assumption:lower_triangular}
				The Koopman matrix \eqref{eq:koopmatrix0}
				is lower triangular, i.e.	$\left\langle L_{F^{(i)}} e_{k}, e_{j} \right\rangle=0$ for all $k<j$ and $i$.
		\end{assumption}
		%\com{J'ai enlevé la condition sur les valeurs propres, qui n'est pas logique à ce stade.}
		In Lemma \ref{lemm:triangular_form} below, we will see that for RKHS where  the orthonormal basis is given by the weighted monomials $e_\alpha(z)=\sqrt{\omega_\alpha}z^\alpha$, Assumption \ref{assumption:lower_triangular} is equivalent to the triangular structure of the Jacobian matrix of the vector field $F$, which leads to a condition that is weaker than the solvability property of the vector fields.
		
		\subsection{ A common Lyapunov function for switched nonlinear systems}

		We now aim to show that, for some positive sequence $(\epsilon_k)_{k=1}^\infty$, the series
		\begin{equation}\label{eq:lyapunov1}
			\mathcal{V}(f)=\sum_{k=1}^{\infty}\epsilon_k  \left| \left\langle f, e_k\right\rangle \right|^2
		\end{equation}
		is a Lyapunov functional for the  switched Koopman system \eqref{eq:switch_inf}. Before starting our main result, we need the following lemma.

		\begin{lemma}\label{lemmepreuve3}
			Let $\dot z =F(z)$ be a nonlinear system on $D$ and let $ \dot f =L_{F}f $ be its corresponding Koopman  system on $\mathcal{D}\left(L_F\right)\subset \mathcal{H}^2(D)$ such that Assumptions \ref{assumption:general}-\ref{assumption:lower_triangular} are satisfied.
			
			If the series  
			\begin{equation}\label{eq:serie1}
					\mathcal{V}(k_z-k_0)=\sum_{k=1}^{\infty}\epsilon_k  \left| \left\langle k_z, e_k\right\rangle \right|^2
				\end{equation} and
				\begin{equation}
					\label{eq:serie2}
					\sum_{k=1}^{\infty} \epsilon_k \dfrac{d}{dt}  \left| \left\langle U_t^* (k_z-k_0), e_k\right\rangle \right|^2
			\end{equation}
			are absolutely and uniformly convergent in $S \subseteq D\subseteq\mathbb{B}^n$ for all $t>0$, then, for all double sequences of positive real numbers $(b_{jk})_{j\geq 1,k\geq 1}$ such that
			\begin{equation}\label{eq:pondera}
				\sum_{k=1}^\infty b_{jk}\leq 1,
			\end{equation}
			one has, for all $z\in S  \subseteq D\subseteq\mathbb{B}^n$,
			{\small
				\begin{equation}
					\label{eq:time_der_Lyap}
					\begin{split} \dfrac{d}{dt}\mathcal{V}\left(U_t^*(k_z-k_0)\right)\leq & 2\sum_{j=1}^{\infty} b_{jj} \epsilon_j   \left| c_{j}\right|^2 \Re\left(\lambda_{j}\right)\\ 
						& +2\sum_{j=2}^{\infty} \sum_{k=1}^{j-1} \left(b_{jk} \epsilon_j \left| c_{j}\right|^2 \Re\left(\lambda_{j}\right)+ b_{kj}\epsilon_k \left| c_{k}\right|^2 \Re\left(\lambda_{k}\right)+ \epsilon_k \Re \left( c_{j}\bar c_{k}  \left\langle  e_j,L_{F}e_k\right\rangle \right)\right),
					\end{split}
			\end{equation}}
			with $c_j=\left\langle U_t^*(k_z-k_0), e_j\right\rangle $ and $\lambda_{j}=\left\langle L_F e_j, e_j\right\rangle $. 
		\end{lemma}
		\begin{proof} Suppose that $z\in S  \subseteq  D\subseteq \mathbb{B}^n$ is such that
			the series \eqref{eq:serie1} and 
			\eqref{eq:serie2} are absolutely and uniformly convergent. Then, by using Lemma \ref{lemmepreuve1} (3), we obtain
			\begin{equation*}
				\begin{split}
					\dfrac{d}{dt} \epsilon_k  \left| \left\langle U_t^*(k_z-k_0), e_k\right\rangle \right|^2 
					&= 2\epsilon_k \Re \left( \left\langle \dfrac{d}{dt} U_t^*(k_z-k_0), e_k\right\rangle \overline{\left\langle  U_t^*(k_z-k_0), e_k \right\rangle}  \right)\\
					&= 2\epsilon_k \Re \left( \left\langle L_{F}^* U_t^*(k_z-k_0), e_k\right\rangle \overline{\left\langle  U_t^*(k_z-k_0), e_k \right\rangle}  \right)\\
					&= 2\epsilon_k \sum_{j=1}^{\infty} \Re \left(c_{j} \bar c_{k}  \left\langle  e_j,L_{F}e_k\right\rangle  \right)
				\end{split}
			\end{equation*}
			where we used the decomposition $U_t^* (k_z-k_0)= \sum_{j=1}^{\infty} c_{j} e_j$.
			Since \eqref{eq:serie2} is  absolutely and  uniformly convergent, term by term derivation yields
			\begin{equation*}
				\begin{split}
					\dfrac{d}{dt} \mathcal{V}\left( U_t^*(k_z-k_0) \right)&=\sum_{k=1}^{\infty}  \dfrac{d}{dt}  \epsilon_k  \left| \left\langle U_t^*(k_z-k_0), e_k\right\rangle \right|^2\\
					&= 2 \sum_{k=1}^{\infty} \sum_{j=1}^{\infty} \epsilon_k \Re \left(c_{j} \bar c_{k}  \left\langle  e_j,L_{F}e_k\right\rangle  \right)\\
					&= 2 \sum_{j=1}^{\infty} \epsilon_j  \left| c_{j}\right|^2  \Re \left( \lambda_{j} \right)+ 2 \sum_{j=2}^{\infty} \sum_{k=1}^{j-1} \epsilon_k \Re \left(c_{j} \bar c_{k}  \left\langle  e_j,L_{F}e_k\right\rangle  \right)
				\end{split}
			\end{equation*}
			where we used the triangular form of $\bar L_{F}$ (Assumption \ref{assumption:lower_triangular}) and $\lambda_j=\left\langle L_F e_j, e_j\right\rangle$.

			Using \eqref{eq:pondera}, we have
			{\small
				\begin{equation*}
					\begin{split}
						\dfrac{d}{dt} \mathcal{V}\left( U_t^*(k_z-k_0) \right) &\leq 2\sum_{j=1}^{\infty} \left( \sum_{k=1}^{j}b_{jk}+\sum_{k=j+1}^{\infty}b_{jk}\right) \epsilon_j \left| c_{j}\right|^2 \Re\left(\lambda_{j}\right)+2\sum_{j=2}^{\infty}\sum_{k=1}^{j-1} \epsilon_k \Re\left( c_{j}\bar c_{k}  \left\langle  e_j,L_{F}e_k\right\rangle \right)\\
						&= 2\sum_{j=1}^{\infty} b_{jj} \epsilon_j \left| c_{j}\right|^2 \Re\left(\lambda_{j}\right)+2\sum_{j=2}^{\infty} \sum_{k=1}^{j-1}b_{jk} \epsilon_j \left| c_{j}\right|^2 \Re\left(\lambda_{j}\right)\\
						&\quad +2 \sum_{j=1}^{\infty} \sum_{k=j+1}^{\infty}b_{jk}\epsilon_j \left| c_{j}\right|^2 \Re\left(\lambda_{j}\right)+2\sum_{j=2}^{\infty}\sum_{k=1}^{j-1} \epsilon_k \Re \left( c_{j}\bar c_{k}  \left\langle  e_j,L_{F}e_k\right\rangle \right)\\
						%&= 2\sum_{j=1}^{\infty} b_{j,j} \epsilon_j  \left| c_{j}\right|^2 \Re\left(\lambda_{j}\right) +2\sum_{j=2}^{\infty} \sum_{k=1}^{j-1}b_{j,k} \epsilon_j \left| c_{j}\right|^2 \Re\left(\lambda_{j}\right)\\
						%&\quad+ 2\sum_{j=2}^{\infty} \sum_{k=1}^{j-1}b_{k,j} \epsilon_k \left| c_{k}\right|^2 \Re\left(\lambda_{k}\right)+2\sum_{j=2}^{\infty}\sum_{k=1}^{j-1} \epsilon_k \Re \left( c_{j}\bar c_{k}  \left\langle  e_j,L_{F}e_k\right\rangle \right)\\
						&= 2\sum_{j=1}^{\infty} b_{jj} \epsilon_j   \left| c_{j}\right|^2 \Re\left(\lambda_{j}\right) \\
						&\quad+2\sum_{j=2}^{\infty} \sum_{k=1}^{j-1} \left(b_{jk} \epsilon_j \left| c_{j}\right|^2 \Re\left(\lambda_{j}\right)+ b_{kj}\epsilon_k \left| c_{k}\right|^2 \Re\left(\lambda_{k}\right)+ \epsilon_k \Re \left( c_{j}\bar c_{k}  \left\langle  e_j,L_{F}e_k\right\rangle \right)\right).
					\end{split}
			\end{equation*}}
			\hfill{}
		\end{proof}
		
		%Under Assumption \ref{assump_hyperbolic}, 
		It is important to note that, when $\Re\{\lambda_{j}\}<0$ for all $j$, the time derivative \eqref{eq:time_der_Lyap} of the Lyapunov functional is negative if (negative) terms related to the diagonal entries $\left\langle L_F e_j, e_j\right\rangle$ and $\left\langle L_F e_k, e_k \right\rangle $ compensate (possibly positive) cross-terms related to $\left\langle  e_j,L_{F}e_k\right\rangle$. We note that a term associated with a diagonal entry will be used to compensate an infinity of cross-terms (associated with entries in the corresponding row and column of the Koopman matrix), and the values $b_{jk}$ play the role of weights in the compensation process.
		
		We are now in position to state our main result.
		\begin{theorem}\label{thm:principalresult}
			Let 
			\begin{equation}\label{eq:switchonpoly}
				\left\lbrace \dot z =F^{(i)}(z) \right\rbrace_{i=1}^m
			\end{equation}
			be a switched nonlinear system on $D$ 
			and suppose that
			\begin{itemize}
				\item Assumptions \ref{assump1}-\ref{assumption:lower_triangular} hold,
				\item all subsystems of \eqref{eq:switchonpoly} have a common hyperbolic equilibrium $z_e=0$ that is globally asymptotically stable on $D$,
				%\item %the Lie algebra $\textrm{span}\left\lbrace JF^{(i)}(0)\right\rbrace_{Lie} $ is solvable (and therefore there exists a matrix $P$ such that $P^{-1}JF^{(i)}(0)P$ are upper triangular), 
				%\review{the Koopman matrices   $\left\lbrace \bar{L}_{F^{(i)}}\right\rbrace_{i=1}^m $ satisfy %\footnote{\review{In Corollary \ref{corol:jacobian_solvable} below, this assumption will be guaranteed by the solvability of the Lie algebra $\textrm{span}\left\lbrace JF^{(i)}(0)\right\rbrace_{Lie}$ in the context of the Hardy space on the polydisk.} }
					%Assumption \ref{assumption:lower_triangular},
					\item $\Re\{\left\langle L_{F^{(i)}} e_j, e_j\right\rangle\}<0$ for all $j\in \mathbb{N}$ and $i\in \{1,\cdots,m\}$,
					\item there exists $\rho\in ]0, 1]$ such that  a set $S\subseteq D\subseteq\mathbb{B}^n\left( \rho \right)$  is forward invariant with respect to the flows $\varphi^{(i)}_t$ of $F^{(i)}$.
				\end{itemize}
				Consider a double sequence of positive real numbers $\left( b^{(i)}_{jk}\right)_{j\geq 1, k\geq 1}$, with $i=1,\dots,m$, such that $b^{(i)}_{jk}b^{(i)}_{kj}>0$ if $\langle L_{F^{(i)}} e_k,  e_j \rangle \neq 0$  and such that $\sum_{k=1}^\infty b^{(i)}_{jk}\leq 1$, and define the double sequence
				\begin{equation}\label{eq:double_sequence}
					\left( Q^{(i)}_{jk} \stackrel{\text{def}}{=} \begin{cases} \dfrac{\left|\left\langle L_{F^{(i)}} e_k,  e_j \right\rangle \right|^2  }{4\left|\Re\left(\left\langle   L_{F^{(i)}} e_j, e_j \right\rangle\right)\right|\left|\Re\left(\left\langle L_{F^{(i)}} e_k,  e_k \right\rangle\right)\right|}\dfrac{1}{b^{(i)}_{jk}b^{(i)}_{kj}} & \textrm{if } \left\langle L_{F^{(i)}} e_k,  e_j \right\rangle \neq 0 \\
						0 & \textrm{otherwise} \end{cases} \right)_{j\geq 2, 1\leq k\leq j-1}.
				\end{equation}
				%\com{remplacer par la convergence des séries \eqref{eq:serie1} et \eqref{eq:serie2}}
				%If the series 
				%\begin{equation}\label{eq:conditionconvergence_bis}
				%\sum_{k=1}^{+\infty}|\alpha(k)|\,\epsilon_k\,\rho^{2|\alpha(k)|}
				%\end{equation}
				If the series \eqref{eq:serie1} and \eqref{eq:serie2}
					are absolutely and uniformly convergent in $S \subseteq D\subseteq\mathbb{B}^n$ for all $t>0$, with
				
				\begin{equation}
					\label{eq:defineepsilon}
					\epsilon_j = \max_{\substack{i=1,\cdots,m \\ k=1,\dots, j-1}} \epsilon_k \, Q^{(i)}_{jk},
				\end{equation}
				
				then the switched system \eqref{eq:switchonpoly} is GUAS on $S\subseteq D$. Moreover the series 
				$$V(z)=\sum_{k=1}^\infty \epsilon_k \left\vert z^{\alpha (k)}\right\vert^2$$
				is a common global Lyapunov function on $ S\subseteq  D$).
			\end{theorem}
			
			\begin{proof}
				Consider the switched system
				\begin{equation}\label{eq:switchonpoly_new_coord}
					\left\lbrace \dot{z} =F^{(i)}(z) \right\rbrace_{i=1}^m
				\end{equation}
				defined on  $D$ and generating the flows $\varphi_t^{(i)}$ invariant on $S\subseteq D\subseteq \mathbb{B}^n$. By Assumption \ref{assumption:lower_triangular}, the orthonormal basis $\{e_k\}_{k=0}^\infty$ generates a common infinite invariant maximal flag for $\bar L_{F^{(i)}}$. We first show that the candidate Lyapunov functional $\mathcal{V}(f)=\sum_{k=1}^{\infty}\epsilon_k  \left| \left\langle f, e_k\right\rangle \right|^2$
				satisfies
				\begin{equation*}
					\dfrac{d}{dt}\mathcal{V}\left( (U^{(i)}_t)^*\left(k_{z}-k_0\right)\right)<0
				\end{equation*}
				for all $i=1,\cdots, m$ and all $z\in  S\subseteq  D$, where $U^{(i)}_t$ denotes the Koopman semigroup associated with the subsystem $\dot{z}=F^{(i)}(z)$. 
				%Lemma \ref{lemmepreuve2} with \eqref{eq:conditionconvergence_bis} implies that the series \eqref{eq:serie1} and \eqref{eq:serie2} are absolutely convergent on $S$. \al{enlever si cela devient une hypo.} 
				Since, by assumption, the series \eqref{eq:serie1} and \eqref{eq:serie2} are absolutely convergent on $S$, it follows from Lemma \ref{lemmepreuve3} that, for all $z \in   S\subseteq D$,
				\begin{equation*}%\label{eq:compensation}
					\begin{split}
						\dfrac{d}{dt}\mathcal{V}\left(( U^{(i)}_t)^*\left(k_{z}-k_0\right)\right)\leq & 2\sum_{j=1}^{\infty} b^{(i)}_{jj} \epsilon_j   \left| c^{(i)}_{j}\right|^2 \Re\left(\lambda^{(i)}_{j}\right)\\ &+2\sum_{j=2}^{\infty} \sum_{k=1}^{j-1} b^{(i)}_{jk} \epsilon_j \left| c^{(i)}_{j}\right|^2 \Re\left(\lambda^{(i)}_{j}\right)+ b^{(i)}_{kj}\epsilon_k \left| c^{(i)}_{k}\right|^2 \Re\left(\lambda^{(i)}_{k}\right)+ \epsilon_k \Re \left( c^{(i)}_{j}\bar c^{(i)}_{k}  \left\langle e_j, L_{F^{(i)}} e_k\right\rangle \right)
					\end{split}
				\end{equation*}
				where $c^{(i)}_{j}=\left\langle (U^{(i)}_t)^*\left(k_{z}-k_0\right), e_j\right\rangle $ and $\lambda^{(i)}_{j}=\left\langle L_{F^{(i)}} e_j, e_j\right\rangle$.
				Since $\Re\{\lambda^{(i)}_{j}\}<0$, one has to find a sequence of positive numbers $\left(\epsilon_j\right)_{j\geq 1}$ such that
				\begin{equation*}
					b^{(i)}_{jk} \epsilon_j \left| c^{(i)}_{j}\right|^2 \left|\Re\left(\lambda^{(i)}_{j}\right)\right|+ b^{(i)}_{kj}\epsilon_k \left| c^{(i)}_{k}\right|^2 \left|\Re\left(\lambda^{(i)}_{k}\right)\right| \geq \epsilon_k \left| \Re \left( c^{(i)}_{j}\bar c^{(i)}_{k}  \left\langle  e_j, L_{F^{(i)}} e_k\right\rangle \right) \right|
				\end{equation*}
				for all $i=1,\cdots, m$ and for all $j,k$ with $j>k$ such that 
				\begin{equation}\label{eq:compensate_nonzero_term}
					\left\langle  e_j,L_{F^{(i)}}e_k\right\rangle \neq 0 .
				\end{equation}
				By using the inequality 
				\begin{equation*}\label{eq:inequality}
					\left| \Re \left( c^{(i)}_{j}\bar c^{(i)}_{k}  \left\langle  e_j, L_{F^{(i)}} e_k\right\rangle \right) \right|\leq \left| c^{(i)}_{j}\right| \left| c^{(i)}_{k}\right| \left| \left\langle e_j,L_{F^{(i)}}e_k\right\rangle \right|,
				\end{equation*}
				one has to satisfy
				\begin{equation*}
					b^{(i)}_{jk}\epsilon_j \left| c^{(i)}_{j}\right|^2 \left| \Re\left(\lambda^{(i)}_{j}\right)\right| + b^{(i)}_{kj}\epsilon_k \left| c^{(i)}_k\right|^2 \left| \Re\left(\lambda^{(i)}_{k}\right)\right| \geq \epsilon_k \left| c^{(i)}_{j}\right| \left| c^{(i)}_{k}\right| \left| \left\langle L_{F^{(i)}} e_k, e_j\right\rangle \right|
				\end{equation*}
				or equivalently
				\begin{equation}
					\label{eq:cond_epsilon}
					\epsilon_j \geq \epsilon_k \left(-\dfrac{ b^{(i)}_{kj}}{b^{(i)}_{jk}} \dfrac{\left| \Re\left(\lambda^{(i)}_{k}\right)\right| }{\left| \Re\left(\lambda^{(i)}_{j}\right)\right|} \left| \dfrac{  c^{(i)}_{k}}{  c^{(i)}_{j}}\right|^2 +    \dfrac{\left| \left\langle L_{F^{(i)}} e_k, e_j\right\rangle \right|}{b^{(i)}_{jk}\left| \Re\left(\lambda^{(i)}_{j}\right)\right| } \left| \dfrac{  c^{(i)}_{k}}{  c^{(i)}_{j}}\right|\right) \stackrel{\text{def}}{=} \epsilon_k \,  h \left(\left| \dfrac{c^{(i)}_{k}}{c^{(i)}_{j}}\right|\right) .
				\end{equation}
				It is easy to see that the real quadratic function $h$ has the maximal value
				\begin{equation*}%\label{eq:maxim2}
					Q^{(i)}_{jk}
					=\dfrac{\left|\left\langle L_{F^{(i)}} e_k,  e_j \right\rangle\right|^2  }{4\left|\Re\left(\lambda^{(i)}_{j}\right)\right| \left|\Re\left(\lambda^{(i)}_{k}\right)\right|}\dfrac{1}{b^{(i)}_{jk}b^{(i)}_{kj}}
				\end{equation*}
				so that \eqref{eq:cond_epsilon} is satisfied if we choose iteratively $\epsilon_j$ according to \eqref{eq:defineepsilon}. Finally, it follows that we have
				\begin{eqnarray*}
					\dfrac{d}{dt}\mathcal{V} ( \left(U^{(i)}_t)^*\left(k_{z}-k_0\right)\right)&< &2\sum_{j=1}^{\infty} b^{(i)}_{jj} \epsilon_j   \left| c^{(i)}_{j}\right|^2 \Re\left(\lambda^{(i)}_{j}\right) \\
					&< &-\min_{j}\left\lbrace b^{(i)}_{jj}    \left|\Re\left(\lambda^{(i)}_{j}\right)\right|\right\rbrace \sum_{j=1}^{\infty} \epsilon_j \left| c^{(i)}_{j}\right|^2 \\
					&=&-\min_{j}\left\lbrace b^{(i)}_{jj}    \left|\Re\left(\lambda^{(i)}_{j}\right)\right|\right\rbrace  \mathcal{V}\left((U^{(i)}_t)^*\left(k_{z}-k_0\right)\right) .
				\end{eqnarray*}
				With the evaluation functional $k_{z}$, we can define
				\[V:   S\subseteq  D\rightarrow \mathbb{R}^+,\quad V(z)=\mathcal{V}\left(k_{z}-k_0\right)\]
				and, using Lemma \ref{lemmepreuve1}, we verify that
				\begin{equation*}
					\begin{split}
						V\left( \varphi_t^{(i)} (z)\right) = \mathcal{V}\left(k_{ \varphi_t^{(i)} (z)}-k_0\right) = \mathcal{V}\left(k_{\varphi_t^{(i)} (z)}-k_{\varphi_t^{(i)}(0)}\right) & =\mathcal{V}\left( ( U^{(i)}_t)^* \left(k_{z}-k_0\right)\right) \\
						& < \mathcal{V}\left(  k_{z}-k_0\right) = V(z).
					\end{split}
				\end{equation*}
				%\textcolor{red}{dans le théorème je n'ai plus $P$ donc il faut adapter}
				%
				%In addition, if we define $V=\widehat{V}\circ P^{-1}:\mathbb{D}^n\left( \rho\right)\rightarrow \mathbb{R}^+$ (or $\mathbb{X}^n\left(\rho\right)\rightarrow \mathbb{R}^+$), we have 
				%\begin{equation*}
				%V\left( \varphi_t^{(i)} (z)\right)=\widehat{V}\left( P^{-1}\varphi_t^{(i)} (Pz)\right) =\widehat{V}\left( \widehat{\varphi}_t^{(i)} (z\right) < \widehat{V}(z)=V(Pz)=V(z).
				%\end{equation*}
				Therefore, we have the CLF 
				\begin{equation}\label{eq:lyapunovnonlinear}
					V(z)=\sum_{k=1}^\infty \epsilon_k \left| \left\langle k_{z}-k_0,e_k\right\rangle \right|^2=\sum_{k=1}^\infty \epsilon_k \left| \left\langle k_{z},e_k\right\rangle \right|^2=\sum_{k=1}^\infty \epsilon_k \left\vert z^{\alpha (k)}\right\vert^2
				\end{equation}
				for the switched nonlinear system \eqref{eq:switchonpoly}. Finally, since $ S\subseteq D$ is forward invariant with respect to $\varphi^{(i)}_t$, the switched system \eqref{eq:switchonpoly} is GUAS on $S\subseteq D$.
			\end{proof}

			Note that, if the assumptions of Theorem \ref{thm:principalresult} are satisfied but the set $ S\subseteq D$  is not forward invariant with respect to the flow generated by the subsystems, then the switched system is GUAS in the largest sublevel set of the Lyapunov function that is contained in $D$.

			The condition on the boundedness of the double sequence \eqref{eq:double_sequence} could be interpreted as the dominance of the negative diagonal entries of the matrix $\bar{L}_{F^{(i)}}$ with respect to the other entries.  Moreover, the number of nonzero cross-terms \eqref{eq:compensate_nonzero_term} to be compensated affects the way we define the sequence of weights $b^{(i)}_{jk}$ and therefore the sequence $\epsilon_j$ in \eqref{eq:defineepsilon}.  
			%begin{remark}\label{rem:convergence_double_series}
			%  In the Hardy space context, as we will prove in Lemma \ref{lemmepreuve2} below, the hypothesis of absolute and uniform convergence  on $S$ for all $t>0$ of the series \eqref{eq:serie1} and \eqref{eq:serie2} in Lemma \ref{lemmepreuve3} and Theorem \ref{thm:principalresult} can be relaxed by a more practical one  supposing that
			%   \begin{equation}
				%\label{eq:conditionconvergence_bis}
				%\sum_{k= 1}^\infty |\alpha(k)|\epsilon_k\rho^{2|\alpha(k)|}
				%\end{equation}
				% is convergent. 
				%\end{remark}}
				If the double sequence \eqref{eq:double_sequence} has an upper bound $Q<1$, one can set $\epsilon_k=Q$ for all $k$. However, such case rarely appears. Instead, if $Q>1$, one might have $\epsilon_k =\mathcal{O}(Q^k)$ and the series \eqref{eq:serie1} and \eqref{eq:serie2} might not converge. In the next section, we will focus on specific cases where we can find a proper choice of sequence $b^{(i)}_{jk}$ such that the series converge.
				
				\subsection{Corollaries of the main result in the Hardy space context}
				Now, we focus on the case of the Hardy space on the polydisk (or the hypercube), for which we derive two corollaries of Theorem \ref{thm:principalresult}.
				First, we recall that Assumption \ref{assumption:general} (i) (i.e. analytic vector field) implies that Assumption \ref{assumption:general} (ii) is satisfied (see Remark \ref{rem:assmp2} above.) Next, we prove that Assumption \ref{assumption:lower_triangular} also holds if the Jacobian matrix is triangular.
				\begin{lemma}\label{lemm:triangular_form}
					Let $F$ be an analytic vector field on $\mathbb{D}^n$ such that the Jacobian matrix $JF(0)$ is upper triangular. Then the Koopman matrix \eqref{eq:koopmatrix0} is lower triangular, i.e.
					$\left\langle L_F e_{k}, e_{j} \right\rangle=0$ for all $k<j$.
					Moreover, the adjoint $L_F^*$ of the Koopman generator admits an infinite invariant maximal flag generated by the monomials $e_k$, i.e. $S_k=\textrm{span}\{e_1,\dots,e_k\}$.
				\end{lemma}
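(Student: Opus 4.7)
The plan is to read off both claims directly from the entry formula \eqref{eq:koop_matrix1} and its refinement in Remark \ref{rem:linear_vector_field}. Fixing $k>j$, the entry $\langle L_F e_k,e_j\rangle$ vanishes automatically when $|\alpha(k)|>|\alpha(j)|$ by the second branch of \eqref{eq:koop_matrix1}, so the only case to analyse is when the two multi-indices share the same total degree.

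Within a fixed total degree, Remark \ref{rem:linear_vector_field} tells me that the only possibly non-zero off-diagonal entries correspond to pairs $(\alpha(k),\alpha(j))$ that differ by a unit shift from some position $l$ to some position $r\neq l$, the value of the entry then being $\alpha_l(k)\,[JF(0)]_{lr}$. I would then identify which of these shifts can actually occur with $k>j$: the lexicographic convention listed in the Notations ranks a monomial with a larger early exponent first, so $k>j$ forces the smallest position at which $\alpha(k)$ and $\alpha(j)$ differ to be precisely the position of the increment, namely $r$, and to satisfy $r<l$. The relevant coefficient $[JF(0)]_{lr}$ is then strictly sub-diagonal and vanishes by hypothesis, completing the proof that $\bar{L}_F$ is upper triangular.

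From this, the flag statement will follow by the elementary identity $\langle L_F^*e_j,e_k\rangle=\overline{\langle L_F e_k,e_j\rangle}=0$ for $k>j$. Only finitely many coefficients $\langle L_F^*e_j,e_k\rangle$ are a priori non-zero (those with $k\leq j$), so $L_F^*e_j$ is a finite linear combination of $e_1,\ldots,e_j$; this sidesteps any concern about the domain of the adjoint and yields $L_F^* S_k\subseteq S_k$ for every $k$. Combined with the immediate inclusions $S_k\subset S_{k+1}$ and $\dim S_k=k$, this exhibits $(S_k)$ as the claimed infinite invariant maximal flag.

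The main obstacle I anticipate is purely notational: the matrix \eqref{eq:koopmatrix0} stores the expansion of $L_F e_k$ in its $k$-th row (transpose of the usual convention), the lexicographic order is reversed within each degree block, and the Jacobian index $l$ in $[JF(0)]_{lr}$ labels the output coordinate of $F_l$. A single misalignment of any of these conventions would flip the conclusion from upper to lower triangular; once they are consistently tracked, the argument reduces essentially to the one-line observation that moving a unit of exponent from the $l$-th to the $r$-th variable picks up the Jacobian coefficient $[JF(0)]_{lr}$, so that the triangular structure of $JF(0)$ is faithfully transported to $\bar{L}_F$.
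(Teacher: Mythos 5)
Your proof is correct and follows essentially the same route as the paper's: vanishing across degree blocks via \eqref{eq:koop_matrix1}, the lexicographic argument forcing $r<l$ within a block so that the relevant entry is a sub-diagonal Jacobian coefficient, and the flag statement read off from $\langle e_k, L_F^* e_j\rangle=0$ for $k>j$. The extra care you take with the ordering and indexing conventions is sound but does not change the argument.
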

				\begin{proof}
					It follows from \eqref{eq:koop_matrix1_hardy} that $\left\langle L_F e_{k}, e_{j} \right\rangle=0$ if $|\alpha(k)|>|\alpha(j)|$ (i.e. the Koopman matrix \eqref{eq:koopmatrix0} is always lower triangular by matrix blocks related to monomials of the same total degree). In the case $|\alpha(k)|=|\alpha(j)|$ with $k>j$, the lexicographic order implies that one can have $\alpha(j)=(\alpha_1(k),\cdots, \alpha_{\ell}(k)-1,\cdots, \alpha_{r}(k)+1, \cdots,\alpha_n(k))$ only with $r<l$. Since $[JF(0)]_{\ell r}=0$ for all $l>r$, it follows from \eqref{eq:total_degree_bloc_hardy} that $\left\langle L_F e_{k}, e_{j} \right\rangle=0$ when $k>j$.
					Finally, it is clear that $L_F^* e_{j} \in \textrm{span}\{e_1,\dots,e_j\}$ since $\left\langle e_{k}, L_F^* e_{j} \right\rangle=0$ for all $k>j$.
				\end{proof}
				Note that the previous lemma remains valid for any RKHS of analytic functions with an orthonormal basis of weighted monomials.
				
				\begin{remark}
					\label{rem:eigen}
					When the Jacobian matrix is upper triangular, it is well-known that $[JF(0)]_{jj}=\tilde{\lambda}_j$. In this case, it follows from \eqref{eq:total_degree_bloc_hardy} that the diagonal entries of the (lower triangular) Koopman matrix are given by
					
					\begin{equation}
						\label{eq:diagonal_hardy}
						\lambda_j = \langle L_F e_j, e_j\rangle= \sum_{\ell=1}^n \alpha_\ell(j) \tilde{\lambda}_\ell.
					\end{equation}
					
					Note that these values are the Koopman eigenvalues in the case of non-resonant\footnote{The eigenvalues $\tilde{\lambda}_j$ are non-resonant if $\sum_{j=1}^n \alpha_j \tilde{\lambda}_j = 0$ with $\alpha \in \mathbb{Z}^n$ implies that $\alpha=0$.} eigenvalues $\tilde{\lambda}_j$.
				\end{remark}

				It follows from Lemma \ref{lemm:triangular_form} that we can guarantee the existence of a common invariant maximal flag for the family of adjoints of Koopman generators.
				\begin{corollary}
					\label{corol:jacobian_solvable} Let $\left\lbrace F^{(i)}\right\rbrace _{i=1}^m$ be a switched nonlinear system on $\mathbb{D}^n $ (or $\mathbb{X}^n $) and suppose that the Lie algebra of matrices $\textrm{span}\left\lbrace JF^{(i)}(0)\right\rbrace _{Lie}$ is solvable. Then there exists a change of variables $z\mapsto \widehat{z}= P^{-1}z $ from $\mathbb{C}^n$ (or $\mathbb{R}^n$) to $\mathbb{C}^n$ such that the adjoint operators $L_{\widehat{F}^{(i)}}^*$ of the Koopman generators (with $\widehat{F}^{(i)}(\widehat{z})=P^{-1}F^{(i)}(P\widehat{z})$) admit a common infinite invariant maximal flag. Moreover, $\left\lbrace \widehat{F}^{(i)}\right\rbrace _{i=1}^m$ is a switched nonlinear system on $S=P^{-1}(\mathbb{D}^n) \subseteq \mathbb{D}^n\left(\| P^{-1}\|_\infty \right)$.
				\end{corollary}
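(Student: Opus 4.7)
The plan is to reduce this statement to Lemma~\ref{lemm:triangular_form} by choosing a single linear change of coordinates that simultaneously upper-triangularizes all the Jacobians $JF^{(i)}(0)$. Since $\mathrm{span}\{JF^{(i)}(0)\}_{\mathrm{Lie}}$ is by assumption a solvable Lie algebra of complex matrices, Lie's theorem (Theorem~\ref{thm:lie} in the Appendix) provides an invertible matrix $P\in\mathbb{C}^{n\times n}$ such that $P^{-1}JF^{(i)}(0)P$ is upper triangular for every $i=1,\dots,m$. This is the matrix $P$ featured in the statement of the corollary.

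With $P$ in hand, I would set $z=P\widehat{z}$ and define $\widehat{F}^{(i)}(\widehat{z})=P^{-1}F^{(i)}(P\widehat{z})$, which is the natural transformation of the vector field under the linear change of variables. A direct chain-rule computation gives $J\widehat{F}^{(i)}(0)=P^{-1}JF^{(i)}(0)P$, so every Jacobian $J\widehat{F}^{(i)}(0)$ is upper triangular by construction. Lemma~\ref{lemm:triangular_form} then applies to each $\widehat{F}^{(i)}$ separately and shows that, in the monomial basis $(e_k)_{k\in\mathbb{N}}$ of $\mathbb{H}^2(\mathbb{D}^n)$ ordered lexicographically, the corresponding Koopman matrix is upper triangular. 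Because this basis does not depend on $i$, the flag $S_k=\mathrm{span}\{e_1,\dots,e_k\}$ is simultaneously invariant under every adjoint $L_{\widehat{F}^{(i)}}^{*}$, which is exactly the common infinite invariant maximal flag claimed in the corollary.

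It remains to identify the natural state space of the transformed system. For any $\widehat{z}=P^{-1}z$ with $z\in\mathbb{D}^n$ one has
\[
|\widehat{z}_j|=\Bigl|\sum_{k=1}^n (P^{-1})_{jk}\,z_k\Bigr|\leq \sum_{k=1}^n |(P^{-1})_{jk}|\leq \|P^{-1}\|_\infty,
\]
so $P^{-1}(\mathbb{D}^n)\subseteq\mathbb{D}^n(0,\|P^{-1}\|_\infty)$. The flow of $\widehat{F}^{(i)}$ is conjugate to that of $F^{(i)}$ via $P$, hence it remains holomorphic and forward-invariant on $P^{-1}(\mathbb{D}^n)$, showing that $\{\widehat{F}^{(i)}\}_{i=1}^m$ is a switched nonlinear system on $\mathbb{D}^n(0,\|P^{-1}\|_\infty)$ in the sense of Assumption~\ref{assump4}.

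The argument is mostly bookkeeping: Lie's theorem delivers the simultaneous triangularization at the Jacobian level, and Lemma~\ref{lemm:triangular_form} lifts it to an upper triangular Koopman representation in a common monomial basis. The only mildly delicate point, which I expect to be the main obstacle, is the domain of the transformed vector field: the image $P^{-1}(\mathbb{D}^n)$ is no longer a polydisk, so one must enlarge it to the polydisk of radius $\|P^{-1}\|_\infty$ via the operator infinity-norm estimate above in order to keep the analysis within the Hardy-space framework of Section~\ref{sec:Koopman}.
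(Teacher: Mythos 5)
Your proposal is correct and follows essentially the same route as the paper: Lie's theorem (Theorem~\ref{thm:lie}) yields the simultaneous triangularization $P^{-1}JF^{(i)}(0)P=T^{(i)}$, Lemma~\ref{lemm:triangular_form} applied in the new coordinates gives the common monomial flag for the adjoints, and the bound $\|P^{-1}z\|_\infty\leq\|P^{-1}\|_\infty\|z\|_\infty<\|P^{-1}\|_\infty$ places the transformed system on $\mathbb{D}^n(0,\|P^{-1}\|_\infty)$. The only cosmetic difference is that the paper explicitly splits $F^{(i)}$ into linear and nonlinear parts before changing variables, whereas you invoke the chain rule directly; the content is identical.
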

				\begin{proof}
					Since $\textrm{span}\left\lbrace JF^{(i)}(0)\right\rbrace _{Lie}$ is solvable, Lie's theorem \ref{thm:lie} implies that the matrices $JF^{(i)}(0)$ are simultaneously triangularizable, i.e. there exists a (complex) matrix $P$ such that $JF^{(i)}(0)=PT^{(i)}P^{-1}$ for all $i$, where $T^{(i)}$ is upper triangular. 
					Let set $F^{(i)}(z)=JF^{(i)}(0)z+\tilde F^{(i)}(z)$ to separate the linear and the nonlinear parts of the dynamics. In the new coordinates $\widehat{z}=P^{-1}z$, we obtain the dynamics $\widehat{F}^{(i)}(\widehat{z})=P^{-1}JF^{(i)}(0)P \widehat{z}+P^{-1}\tilde{F}_i(P\widehat{z})=T^{(i)}\widehat{z}+ \widehat{\tilde{F}}_i(\widehat{z})$.
					It follows from Lemma \ref{lemm:triangular_form} that monomials $\widehat{e}_k$, with $\widehat{e}_{k}(\widehat{z})=(\widehat{z})^{\alpha(k)}$, generate a common invariant maximal flag for $L^*_{\widehat{F}^{(i)}}$. In addition, for all $z\in \mathbb{D}^n$ and all $j=1,\dots, n$, we have
					\begin{equation*}
						\left|\widehat{z}_j \right|\leq \left\|\widehat{z} \right\|_{\infty} = \left\|P^{-1} z \right\|_{\infty} \leq \left\|P^{-1}\right\|_{\infty} \, \left\|z \right\|_{\infty} < \| P^{-1}\|_\infty .
					\end{equation*}
				\end{proof}
				It is clear that the change of coordinates $z\mapsto \widehat{z}= P^{-1}z$ is defined up to a multiplicative constant. Without loss of generality, we will consider in the sequel that $\| P^{-1}\|_\infty=1$, so that $\left\lbrace \widehat{F}^{(i)}\right\rbrace _{i=1}^m$ is a switched nonlinear system on $S=P^{-1}(\mathbb{D}^n) \subseteq \mathbb{D}^n$.
				
				\begin{remark} \label{rem:solvability}
					Even in the case of a system defined on the real hypercube, the matrix $P$ can be complex, and it might not be possible to obtain the simultaneous triangularization property of the Jacobian matrices with only real coordinates. It follows that the common invariant maximal flag for the generators $L^*_{\widehat{F}^{(i)}}$ is obtained in general on $\mathbb{H}^2(\mathbb{D}^n)$, and not on $\mathbb{H}^2(\mathbb{X}^n)$.
				\end{remark}
				
				Instead of a nilpotency or solvability condition on the vector fields $F^{(i)}$, we only require a milder solvability condition on the Jacobian matrices $JF^{(i)}(x_e)$ to guarantee the triangular form of the Koopman matrix \eqref{eq:koopmatrix0}. It is noticeable that this \emph{local} condition is much less restrictive than the  \emph{global} solvability condition mentioned in the original open problem \cite{liberzon2004lie}. Also, it was shown in \cite{angeli2000note} that the triangular form of the vector fields (and therefore of the Jacobian matrices) is not sufficient to guarantee the GUAS property of a switched nonlinear system on $\mathbb{R}^n$. In the next corollaries, however, we use the solvability condition on the Jacobian matrices to prove the GUAS property in a bounded invariant region of the state space. This result is consistent with the local stability result derived in \cite{Liberzon2013SwitchedS}.
				
				As a final step, we prove the following lemma related to the convergence of the series \eqref{eq:serie1} and \eqref{eq:serie2}, which is required in Lemma \ref{lemmepreuve3} and Theorem \ref{thm:principalresult}.
				\begin{lemma}\label{lemmepreuve2}
					Let $\dot z=F(z)$ be a vector field on $\mathbb{D}^n$   which generates a flow $\varphi_t$. Suppose that
					there exist a sequence of positive numbers $\left(\epsilon_k\right)_{k\geq 1}$   and $\rho\in ]0, 1]$ such that a set $S \subseteq \mathbb{D}^n(\rho)$  is forward invariant with respect to $\varphi_t$ and such that the series
					\begin{equation}
						\label{eq:conditionconvergence_bis}
						\sum_{k= 1}^\infty |\alpha(k)|\epsilon_k\rho^{2|\alpha(k)|}
					\end{equation}
					is convergent. 
					Then, the series \begin{equation}\label{eq:lemmepreuve2_serie1}
						\mathcal{V}(k_z-k_0)=\sum_{k=1}^{\infty}\epsilon_k  \left| \left\langle k_z, e_k\right\rangle \right|^2
					\end{equation} and
					\begin{equation}
						\label{eq:lemmepreuve2_serie2}
						\sum_{k=1}^{\infty} \epsilon_k \dfrac{d}{dt}  \left| \left\langle U_t^* (k_z-k_0), e_k\right\rangle \right|^2
					\end{equation}
					are absolutely and uniformly convergent  on $S$ for all $t>0$.  
				\end{lemma}
				
				\begin{proof}
					For the first series, we have
					\begin{equation*}
						\epsilon_k  \left| \left\langle k_z-k_0, e_k\right\rangle \right|^2=\epsilon_k  \left| \left\langle k_z, e_k\right\rangle \right|^2
						=\epsilon_k  \left| z^{\alpha(k)} \right|^{2}<\epsilon_k \rho^{2|\alpha(k)|} \leq |\alpha(k)| \epsilon_k \rho^{2|\alpha(k)|}
					\end{equation*}
					for all $z\in S$ and all $k\geq 1$. For the second series, we have
					{\small
						\begin{eqnarray*}
							\epsilon_k \left|\dfrac{d}{dt}   \left| \left\langle U_t^*(k_z-k_0), e_k\right\rangle \right|^2\right|
							&=&\epsilon_k \left|\dfrac{d}{dt}   \left|(\varphi_t( z))^{\alpha(k)}-(\varphi_t( 0))^{\alpha(k)} \right|^2\right|\\
							&=&2 \epsilon_k \left|  \Re \left(\dfrac{d}{dt} \left( \varphi_t( z) \right)^{\alpha(k)} \cdot \left( \overline{\varphi_t( z)} \right)^{\alpha(k)}\right) \right|\\
							&\leq &2 \epsilon_k  \left|\dfrac{d}{dt} \left( \varphi_t( z) \right)^{\alpha(k)}\right| \cdot  \left| \left( \overline{\varphi_t( z)} \right)^{\alpha(k)} \right|\\
							&< &2 \epsilon_k  \rho^{|\alpha(k)|} \left|\dfrac{d}{dt} \left( \varphi_t( z) \right)^{\alpha(k)}\right| \\
							&\leq &2 \epsilon_k  \rho^{|\alpha(k)|} \sum_{s=1}^n \alpha_s(k) \left| F^{(s)} \left( \varphi_t( z) \right)\right|  \left|\left( \varphi_t( z) \right)_s\right|^{\alpha_s(k)-1} \prod_{\ell=1,l\neq s}^n \left|\left( \varphi_t( z) \right)_\ell\right|^{\alpha_\ell(k)} \\ 
							&< &2 \epsilon_k  \rho^{2|\alpha(k)|-1} \sum_{s=1}^n \alpha_s(k) \left| F^{(s)} \left( \varphi_t( z) \right)\right|
					\end{eqnarray*}}
					for all $z\in S$, $t>0$ and $k\geq 1$.
					By using the invariance of $S$  with respect to $\varphi_t$ and the maximum modulus principle for bounded domains \ref{maxim_modul} with the holomorphic function  $F^{(s)} $, we can denote 
					$$M=\max_{z\in \partial S, s=1,\cdots,n}|  F^{(s)} \left( \varphi_t( z)\right)|$$
					and we obtain
					\begin{equation*}
						\epsilon_k \left|\dfrac{d}{dt}   \left| \left\langle U_t^*(k_z-k_0), e_k\right\rangle \right|^2\right| < 2 M|\alpha(k)| \epsilon_k  \rho^{2|\alpha(k)|-1}.
					\end{equation*}
					Finally, absolute and uniform convergence of both series follow from the Weierstrass test \eqref{wierstrass}.
				\end{proof}
				
				\subsubsection{Global uniform asymptotic stability for polynomial vector fields}
				
				For polynomial vector fields of the form $F^{(i)}_{\ell}(z)=\sum_{k=1}^r a^{(i)}_{\ell,k} z^{\alpha(k)}$, we denote by $K^{(i)}$ the number of nonzero terms (without counting the monomial $z_{\ell}$ in $F^{(i)}_{\ell}$), i.e.
				\begin{equation}
					\label{eq:nb_terms}
					K^{(i)}=\sum_{\ell=1}^m \# \left \{ k \neq \ell:a^{(i)}_{\ell,k} \neq 0 \right\}
				\end{equation}
				where $\#$ is the cardinal of a set. In this case, we have the following result.
				
				\begin{corollary}\label{coroll:poly}
					Let
					\begin{equation}\label{eq:switchonpoly3}
						\left\lbrace \dot z =F^{(i)}(z) \right\rbrace_{i=1}^m
					\end{equation}
					be a switched nonlinear system on $\mathbb{D}^n$ (or on $\mathbb{X}^n$), where  $F^{(i)}$ are polynomial vector fields. Assume that
					\begin{itemize}
						\item all subsystems of \eqref{eq:switchonpoly3} have a common hyperbolic equilibrium $z_e=0$ that is globally asymptotically stable on $\mathbb{D}^n$ (or on $\mathbb{X}^n$),
						\item the Lie algebra $\textrm{span}\left\lbrace JF^{(i)}(0)\right\rbrace_{Lie} $ is solvable (and therefore there exists a matrix $P$ such that $PJF^{(i)}(0)P^{-1}$ are upper triangular),
						\item the polydisk $\mathbb{D}^n$ (or the hypercube $\mathbb{X}^n$) is forward invariant with respect to the flows $\varphi_t^{(i)}$ generated by $F^{(i)}$.
						%\item \review{ there exists $\rho\in ]0, \mu]$ such that $\mathbb{D}^n\left(\rho \right)$} is forward invariant with respect to the flows $\varphi^{(i)}_t$ of $F^{(i)}$.
					\end{itemize}
					If 
					\begin{equation}
						\label{eq:maxim1}
						Q=\max_{i=1,\cdots,m} \limsup_{j \in \mathbb{N}} \max_{k=1,\dots,j-1} \dfrac{(\widehat{K}^{(i)})^2 \left| \left\langle  L_{\widehat{F}^{(i)}}\widehat{e}_k,\widehat{e}_j\right\rangle \right|^2}{\left|\Re\left(\left\langle L_{\widehat{F}^{(i)}}\widehat{e}_j, \widehat{e}_j \right\rangle\right)\right|\left|\Re\left(\left\langle L_{\widehat{F}^{(i)}} \widehat{e}_k, \widehat{e}_k \right\rangle\right)\right|}<1,
					\end{equation}
					where $\widehat{K}^{(i)}$ is the number of nonzero terms of $\widehat{F}^{(i)}(\widehat{z})=P^{-1}F^{(i)}(P\widehat{z})$ (see \eqref{eq:nb_terms}) and where $\widehat{e}_j(\widehat{z})=\widehat{z}^{\alpha(j)}$ are the monomials in the new coordinates $\widehat{z}=P^{-1}z$, then \eqref{eq:switchonpoly3} is GUAS on $\mathbb{D}^n $ (or on $\mathbb{X}^n$).  Moreover the series 
						\[V(z)=\sum_{k=1}^\infty \epsilon_k \left\vert \left( P^{-1}z\right) ^{\alpha (k)}\right\vert^2\]
						is a common  Lyapunov function on  $\mathbb{D}^n $ (or on $\mathbb{X}^n$).
				\end{corollary}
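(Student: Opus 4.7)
The plan is to verify the hypotheses of Theorem \ref{thm:principalresult} for the polynomial setting by exploiting the sparsity of the Koopman matrix. After applying Corollary \ref{corol:jacobian_solvable} to move to coordinates $\widehat{z}=P^{-1}z$ in which every $J\widehat{F}^{(i)}(0)$ is upper triangular (so that every $\bar{L}_{\widehat{F}^{(i)}}$ is upper triangular by Lemma \ref{lemm:triangular_form}), the first task is a combinatorial count: since $\widehat{F}^{(i)}$ is polynomial with $\widehat{K}^{(i)}$ nontrivial coefficients in the sense of \eqref{eq:nb_terms}, formula \eqref{eq:koop_matrix1} shows that each nonzero coefficient $a^{(i)}_{l,\beta}$ with $\beta\neq e_l$ contributes at most one entry to any given row of $\bar{L}_{\widehat{F}^{(i)}}$ and at most one to any given column. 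Hence each row and each column of the strictly upper-triangular part of $\bar{L}_{\widehat{F}^{(i)}}$ carries at most $\widehat{K}^{(i)}$ nonzero entries.

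Given this sparsity, for a parameter $\delta\in(0,1)$ to be fixed below, the plan is to take
\begin{equation*}
b^{(i)}_{jk} = \begin{cases} \delta & \textrm{if } k=j,\\[2pt] \dfrac{1-\delta}{2\widehat{K}^{(i)}} & \textrm{if } k\neq j \textrm{ and } \langle L_{\widehat{F}^{(i)}}\widehat{e}_{\min(j,k)},\widehat{e}_{\max(j,k)}\rangle\neq 0,\\[2pt] 0 & \textrm{otherwise.} \end{cases}
\end{equation*}
The count above guarantees at most $2\widehat{K}^{(i)}$ nonzero off-diagonal weights per row $j$, so $\sum_k b^{(i)}_{jk}\leq\delta+(1-\delta)=1$. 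Substituting into \eqref{eq:double_sequence} yields, whenever the cross-term is nonzero,
\begin{equation*}
Q^{(i)}_{jk} = \frac{(\widehat{K}^{(i)})^2\,\left|\langle L_{\widehat{F}^{(i)}}\widehat{e}_k,\widehat{e}_j\rangle\right|^2}{(1-\delta)^2\,\left|\Re\langle L_{\widehat{F}^{(i)}}\widehat{e}_j,\widehat{e}_j\rangle\right|\,\left|\Re\langle L_{\widehat{F}^{(i)}}\widehat{e}_k,\widehat{e}_k\rangle\right|}.
\end{equation*}
Assumption \eqref{eq:maxim1} provides some $Q_\infty<1$ that dominates the limsup of the $\delta$-free version of this expression, so choosing $\delta\in(0,\,1-\sqrt{Q_\infty})$ forces $\max_i\limsup_j\max_{k<j}Q^{(i)}_{jk}<1$.

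With that limsup strictly below $1$, the sequence $(\epsilon_j)$ is built recursively: fix any $\epsilon_1>0$, let $j_0$ be a threshold beyond which $\max_i\max_{k<j}Q^{(i)}_{jk}\leq Q<1$, apply \eqref{eq:defineepsilon} directly for $j<j_0$, and for $j\geq j_0$ set $\epsilon_j$ just above $Q\cdot\max_{k<j}\epsilon_k$; a short induction shows that $\max_{k\leq j}\epsilon_k$ stabilizes at a finite bound $E$. Since $\epsilon_k\leq E$ uniformly and the number of multi-indices with $|\alpha|=d$ grows only polynomially in $d$, the series \eqref{eq:conditionconvergence_bis} converges for every $\rho\in(0,1)$, so Theorem \ref{thm:principalresult} produces the candidate CLF $V(z)=\sum_k \epsilon_k |(P^{-1}z)^{\alpha(k)}|^2$ together with $\dot V<0$ on every $\mathbb{D}^n(0,\rho)$ with $\rho<1$. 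Because the same $V$ works throughout $\mathbb{D}^n=\bigcup_{\rho<1}\mathbb{D}^n(0,\rho)$ and $\mathbb{D}^n$ is forward invariant by hypothesis, $V$ is a strict common Lyapunov function on $\mathbb{D}^n$, yielding GUAS on $\mathbb{D}^n$. The main technical obstacle is the combinatorial count behind the weight allocation: one must arrange that exactly $\widehat{K}^{(i)}$ (and not a looser constant such as $2\widehat{K}^{(i)}+1$) appears in $Q^{(i)}_{jk}$, which forces the symmetric splitting $b^{(i)}_{jk}=b^{(i)}_{kj}=(1-\delta)/(2\widehat{K}^{(i)})$ together with a careful bookkeeping that distinguishes the contributions coming from column $j$ of $\bar{L}_{\widehat{F}^{(i)}}$ (row indices $k<j$) from those coming from row $j$ (column indices $k>j$).
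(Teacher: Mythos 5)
Your weight allocation, the count of at most $\widehat{K}^{(i)}$ nonzero off-diagonal entries per row and per column, and the resulting expression for $Q^{(i)}_{jk}$ coincide with the paper's proof (your $\delta$ is the paper's $1-\xi$). The gap is in the final step, the construction of $(\epsilon_j)$ and the passage to $\rho=1$. Setting $\epsilon_j$ ``just above $Q\cdot\max_{k<j}\epsilon_k$'' makes the sequence stabilize near a \emph{positive} constant $QE$ rather than decay, so the convergence condition \eqref{eq:conditionconvergence_bis} of Theorem \ref{thm:principalresult} fails precisely at $\rho=1$, which is the only radius at which the corollary's hypotheses let you invoke the theorem: forward invariance is assumed for $\mathbb{D}^n$ only, not for $\mathbb{D}^n(0,\rho)$ with $\rho<1$. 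Lemma \ref{lemmepreuve2} uses forward invariance of $\mathbb{D}^n(0,\rho)$ to bound $|\varphi_t(z)|$ by $\rho$ for all $t>0$; a trajectory starting in $\mathbb{D}^n(0,\rho)$ may leave that smaller polydisk while remaining in $\mathbb{D}^n$, so your exhaustion $\mathbb{D}^n=\bigcup_{\rho<1}\mathbb{D}^n(0,\rho)$ does not license applying the theorem at each $\rho<1$ and gluing the conclusions (the paper's own remark after Theorem \ref{thm:principalresult} notes that without invariance one only gets GUAS on a sublevel set).

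The fix, which is what the paper does, is to exploit that each constraint \eqref{eq:defineepsilon} only forces $\epsilon_j>Q^{(i)}_{jk}\epsilon_k$ for the finitely many $k<j$ actually coupled to $j$, with $Q^{(i)}_{jk}\le Q<1$; since for a polynomial vector field a coupling raises the total degree $|\alpha(\cdot)|$ by at most $r-1$, one may take $\epsilon_j$ decaying geometrically in $|\alpha(j)|$ (e.g.\ $\epsilon_j\asymp\tilde{Q}^{|\alpha(j)|}$ with $\tilde{Q}^{r-1}>Q$ and $\tilde{Q}<1$) while still satisfying every constraint. Then $\sum_k|\alpha(k)|\epsilon_k\rho^{2|\alpha(k)|}$ converges even at $\rho=1$ because the number of multi-indices of degree $d$ grows only polynomially, and Theorem \ref{thm:principalresult} applies directly with $\rho=1$ to give GUAS on all of $\mathbb{D}^n$. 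In short: you must use the slack $Q<1$ to make $(\epsilon_j)$ summable against the degree count, not merely bounded.
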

				
				\begin{proof}
					It follows from Lemma \ref{lemmepreuve2}, Corollary \ref{corol:jacobian_solvable}, and Remark \ref{rem:assmp2} that Assumptions \ref{assumption:general}-\ref{assumption:lower_triangular} hold. Moreover, it is clear that $\langle L_{\widehat{F}^{(i)}} e_j, e_j\rangle<0$ for all $j$ (see Remark \ref{rem:eigen}). Then, the result follows from Theorem \ref{thm:principalresult} with the sequence
					\begin{equation}\label{eq:compensate_polynomial}
						\begin{cases}b^{(i)}_{jj}=(1-\xi)\\ b^{(i)}_{jk}=\dfrac{\xi}{2 \widehat{K}^{(i)}} & \textrm{if } j\neq k \textrm{ with }\left\langle  L_{\widehat{F}^{(i)}}\widehat{e}_k,\widehat{e}_j\right\rangle \neq 0 \, \textrm{ or } \left\langle  L_{\widehat{F}^{(i)}}\widehat{e}_j,\widehat{e}_k\right\rangle \neq 0 \\
							b^{(i)}_{jk}=0, & \textrm{if } j\neq k \textrm{ with } \left\langle  L_{\widehat{F}^{(i)}}\widehat{e}_k,\widehat{e}_j\right\rangle = 0\, \textrm{ or } \left\langle L_{\widehat{F}^{(i)}}\widehat{e}_j,\widehat{e}_k\right\rangle = 0 , \end{cases}
					\end{equation}
					with $\xi\in]0,1[$. It is clear from \eqref{eq:koop_matrix1_hardy} that, for a fixed $j$ and for all $k\in \mathbb{N}\setminus\{j\}$, there are at most $\widehat{K}^{(i)}$ nonzero values $\langle  L_{\widehat{F}^{(i)}}\widehat{e}_k,\widehat{e}_j\rangle$ and at most $K^{(i)}$ nonzero values $\langle  L_{\widehat{F}^{(i)}}\widehat{e}_j,\widehat{e}_k\rangle$, so that the sequence \eqref{eq:compensate_polynomial} satisfies $\sum_{k=1}^\infty b_{jk}^{(i)} \leq 1$. The elements $Q^{(i)}_{jk}$ of the double sequence \eqref{eq:double_sequence} are given by
					\begin{equation}\label{eq:maxim2_poly}
						Q^{(i)}_{jk}= 
						\dfrac{(\widehat{K}^{(i)})^2 \left| \left\langle  L_{\widehat{F}^{(i)}}\widehat{e}_k,\widehat{e}_j\right\rangle \right|^2}{\xi^2 \, \left|\Re\left(\left\langle L_{\widehat{F}^{(i)}}\widehat{e}_j, \widehat{e}_j \right\rangle\right)\right|\left|\Re\left(\left\langle L_{\widehat{F}^{(i)}} \widehat{e}_k, \widehat{e}_k \right\rangle\right)\right|}.
					\end{equation}
					The condition \eqref{eq:maxim1} implies that $\max_{i=1,\cdots,m} \limsup_{j \in \mathbb{N}} \max_{k=1,\dots,j-1} Q^{(i)}_{jk}\stackrel{\text{def}}{=}Q<1$ for some $\xi\in]0,1[$, so that \eqref{eq:defineepsilon} implies, with the arbitrary choice $\epsilon_1=1$, that
					\begin{equation}
						\label{eq:epsilon_diag1}
						\epsilon_j = \max_{k\in \mathcal{K}_j} \left\{\epsilon_k \, Q \right\} = Q
					\end{equation}
					for $j>1$, with $\mathcal{K}_j=\{k \in\{1,\dots,j-1\} : \left\langle L_{\widehat{F}^{(i)}} \widehat{e}_k,  \widehat{e}_j \right\rangle \neq 0 \textrm{ for some } i \in\{1,\dots,m\} \}$. It follows that \eqref{eq:conditionconvergence_bis} is convergent for any $\rho < 1$ so that Lemma \ref{lemmepreuve2} implies that \eqref{eq:lemmepreuve2_serie1} and \eqref{eq:lemmepreuve2_serie2} are convergent for any $\rho < 1$. Theorem \ref{thm:principalresult} implies that the switched system $\left\lbrace \dot{\widehat{z}} =\widehat{F}^{(i)}(z) \right\rbrace_{i=1}^m$ is GUAS on $S=P^{-1} \left(\mathbb{D}^n\right)\subseteq \mathbb{D}^n $ (or on $S=P^{-1}\left(\mathbb{X}^n\right)\subseteq \mathbb{X}^n$)  and then  the system \eqref{eq:switchonpoly3} is GUAS on $\mathbb{D}^n$ (or on $\mathbb{X}^n$).
					
					In addition, from Theorem \ref{thm:principalresult} we deduce that
						\begin{equation*}
							\widehat{V}(z)=\sum_{k=1}^\infty \epsilon_k \left\vert  \widehat{z}^{\alpha (k)}\right\vert^2
						\end{equation*}
						is a CLF of $\left\lbrace \dot{\widehat{z}} =\widehat{F}^{(i)}(z) \right\rbrace_{i=1}^m$ on $S$.
						If we define $V=\widehat{V}\circ P^{-1}:\mathbb{D}^n\rightarrow \mathbb{R}^+$ (or $\mathbb{X}^n\rightarrow \mathbb{R}^+$), we have 
						the CLF 
						\begin{equation*}
							V(z)=\sum_{k=1}^\infty \epsilon_k \left\vert \left(P^{-1}z\right)^{\alpha (k)}\right\vert^2
						\end{equation*}
						for the switched nonlinear system \eqref{eq:switchonpoly3}.
				\end{proof}
				
				\begin{remark}\label{remark:estimate_basin_attraction_1}
					We can use the (truncated) series
					\begin{equation}\label{clf_examples_1}
						V(z)=\sum_{k= 1}^{k_{max}} \epsilon_k\left|\left(P^{-1}z\right)^{\alpha(k)}\right|^{2}
					\end{equation}
					(where, for example, $\epsilon_1=1$ and $\epsilon_k=Q$ for $k>1$) obtained in Corollary \ref{coroll:poly} as a candidate CLF in order to obtain a numerical estimation of the region of attraction (possibly larger than the unit polydisk). This will be illustrated in Section \ref{sec:illustra}. In this case, the invariance of the flow over the polydisk (or hypercube) is not required. 
				\end{remark}

				\subsubsection{Global uniform asymptotic stability for analytic vector fields}
				
				Another result is obtained when a diagonal dominance property is assumed for the Jacobian matrices $JF^{(i)}(0)$.
				\begin{corollary}\label{coroll:gen2}
					Let 
					\begin{equation}\label{eq:switchonpoly4}
						\left\lbrace \dot z =F^{(i)}(z) \right\rbrace_{i=1}^m
					\end{equation}
					be a switched nonlinear system on $\mathbb{D}^n$ (or on $\mathbb{X}^n$), with $F_\ell^{(i)}(z)=\sum_{k=1}^{+\infty}a^{(i)}_{\ell,k}z^{\alpha(k)}$ and $\sum_{k=1}^{+\infty} \vert a^{(i)}_{\ell,k}\vert  <\infty$ for all $i=\{1,\dots,m\}$ and $l\in\{1,\dots,n\}$. Assume that
					\begin{itemize}
						\item all subsystems of \eqref{eq:switchonpoly4} have a common hyperbolic equilibrium $z_e=0$ that is globally asymptotically stable on $\mathbb{D}^n$ (or on $\mathbb{X}^n$),
						\item the Lie algebra $\textrm{span}\left\lbrace JF^{(i)}(0)\right\rbrace_{Lie} $ is solvable (and therefore there exists a matrix $P$ such that $PJF^{(i)}(0)P^{-1}$ are upper triangular),
						\item there exists $\rho \in]0,1]$ such that $\mathbb{D}^n\left(\rho\right)$ (or $\mathbb{X}^n(\rho)$) is forward invariant with respect to the flows $\varphi_t^{(i)}$ generated by $F^{(i)}$.
					\end{itemize}
					
					If there exist $\xi\in]0,1[$ and $\kappa\in]0,1[$ with $\xi+\kappa<1$ such that, for all $q, r\in \{1,\cdots, n\} $ with $q<r$ (when $n>1$),
					\begin{equation}
						\label{eq:diagonal_dominant}
						\left|J\widehat{F}^{(i)}(0)]_{qr}\right|^2 < \left(\frac{2\xi}{n^2-n}\right)^2 \left|\Re([J\widehat{F}^{(i)}(0)]_{rr})\right| \left| \Re([J\widehat{F}^{(i)}(0)]_{qq}) \right|,
					\end{equation}
					\begin{equation}
						\label{eq:diagonal_dominant2}
						\left|[J\widehat{F}^{(i)}(0)]_{qr}\right| < \frac{2\xi}{n^2-n} \left|\Re([J\widehat{F}^{(i)}(0)]_{qq})\right|
					\end{equation}
					
					and
					\begin{equation}
						\label{eq:cond_corol2}
						\max_{i=1,\cdots,m} \limsup_{j \in \mathbb{N}} \max_{\substack{k=1,\dots,j-1 \\ \left\langle L_{\widehat{F}^{(i)}} \widehat{e}_k,  \widehat{e}_j \right\rangle\neq 0}} \dfrac{\sum_{\ell=1}^{\infty}\left| \left\langle  L_{\widehat{F}^{(i)}}\widehat{e}_\ell,\widehat{e}_j\right\rangle \right| \sum_{\ell=1}^{\infty}\left| \left\langle  L_{\widehat{F}^{(i)}}\widehat{e}_k,\widehat{e}_\ell\right\rangle \right|}{\kappa^2 \, \left|\Re\left(\left\langle  L_{\widehat{F}^{(i)}} \widehat{e}_j, \widehat{e}_j \right\rangle\right)\right|\left|\Re\left(\left\langle   L_{\widehat{F}^{(i)}} \widehat{e}_k,  \widehat{e}_k \right\rangle\right)\right|}<\frac{1}{\rho^2 },
					\end{equation}
					where $\widehat{e}_j(\widehat{z})=\widehat{z}^{\alpha(j)}$ are monomials in the new coordinates $\widehat{z}=P^{-1}z$, then \eqref{eq:switchonpoly4} is GUAS on  $\mathbb{D}^n(\rho)$ (or on $\mathbb{X}^n(\rho)$). 
					Moreover the series 
						\[V(z)=\sum_{k=1}^\infty \epsilon_k \left\vert \left( P^{-1}z\right) ^{\alpha (k)}\right\vert^2\]
						is a common  Lyapunov function on $ \mathbb{D}^n (\rho)$ (or on $\mathbb{X}^n(\rho)$).
				\end{corollary}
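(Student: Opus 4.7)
The plan is to reduce the claim to an application of Theorem~\ref{thm:principalresult} by constructing explicit weight sequences $(b^{(i)}_{jk})_{j,k\ge 1}$ adapted to the analytic (but non-polynomial) nature of the vector fields. In contrast to Corollary~\ref{coroll:poly}, the matrix $\bar L_{\widehat F^{(i)}}$ now has infinitely many nonzero off-diagonal entries per row and column, so a uniform weight $\xi/(n^{2}-n)$ is no longer viable. I would split the off-diagonal pairs $(j,k)$ into two regimes according to whether $|\alpha(j)|=|\alpha(k)|$ (contributions coming from the linear part of $\widehat F^{(i)}$, described by \eqref{eq:total_degree_bloc}) or $|\alpha(j)|\ne|\alpha(k)|$ (contributions coming from the higher-order part).

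For the \emph{same-degree} regime, I would design weights of magnitude $\xi/(n^{2}-n)$ that also incorporate the combinatorial factors $\alpha_{l}(k)$, $\alpha_{r}(k)+1$ appearing in \eqref{eq:total_degree_bloc}. Using the identity $|\Re(\lambda^{(i)}_{k})|=\sum_{s}\alpha_{s}(k)|\Re(\tilde\lambda^{(i)}_{s})|$ together with the two complementary diagonal-dominance hypotheses \eqref{eq:diagonal_dominant}--\eqref{eq:diagonal_dominant2}, the resulting same-degree values satisfy $Q^{(i)}_{jk}<1$, and because only $\binom{n}{2}$ directions of shift $\alpha(j)=\alpha(k)-e_{l}+e_{r}$ are admissible, every row contains at most $n(n-1)$ such pairs, so the total same-degree weight in each row is bounded by $\xi$. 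For the \emph{cross-degree} regime, I would adopt a row/column normalization, setting
\[
b^{(i)}_{jk}=\frac{\kappa}{2}\cdot\frac{|\langle L_{\widehat F^{(i)}}\widehat e_{k},\widehat e_{j}\rangle|}{\sum_{l}|\langle L_{\widehat F^{(i)}}\widehat e_{l},\widehat e_{j}\rangle|},\qquad b^{(i)}_{kj}=\frac{\kappa}{2}\cdot\frac{|\langle L_{\widehat F^{(i)}}\widehat e_{k},\widehat e_{j}\rangle|}{\sum_{l}|\langle L_{\widehat F^{(i)}}\widehat e_{k},\widehat e_{l}\rangle|},
\]
which is well defined thanks to the absolute summability hypothesis $\sum_{k}|a^{(i)}_{l,k}|<\infty$. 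A direct substitution into \eqref{eq:double_sequence} shows that $Q^{(i)}_{jk}$ collapses to exactly the quantity on the left-hand side of \eqref{eq:cond_corol2}, giving $Q^{(i)}_{jk}<1/\rho^{2}$ eventually; the cross-degree weights in each row sum to at most $\kappa$. Taking $b^{(i)}_{jj}=1-\xi-\kappa>0$ then makes $\sum_{k}b^{(i)}_{jk}\le 1$, and the pair-positivity requirement of Theorem~\ref{thm:principalresult} is automatic from the construction.

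It remains to build $(\epsilon_{j})$ satisfying \eqref{eq:defineepsilon} while preserving the convergence of \eqref{eq:conditionconvergence_bis}. Let $Q:=\max_{i}\limsup_{j}\max_{k}Q^{(i)}_{jk}$, which is $<1/\rho^{2}$ by \eqref{eq:cond_corol2}, and fix $M\in(Q,1/\rho^{2})$. Because the same-degree $Q^{(i)}_{jk}$ are $<1$ and each degree-$d$ block contains only finitely many monomials, the within-block ratios $\epsilon_{j}/\epsilon_{j'}$ (with $|\alpha(j)|=|\alpha(j')|$) can be taken in a bounded range, while the cross-degree inequalities $\epsilon_{j}>\epsilon_{k}Q^{(i)}_{jk}$ force only $\epsilon_{j}/\epsilon_{k}\ge M^{|\alpha(j)|-|\alpha(k)|}$ asymptotically. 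Choosing $\epsilon_{j}=\mathcal{O}(M^{|\alpha(j)|})$ therefore fulfils \eqref{eq:defineepsilon}, and since $\#\{\alpha:|\alpha|=d\}=\binom{n+d-1}{d}$ grows only polynomially in $d$ while $\epsilon_{j}\rho^{2|\alpha(j)|}\sim(M\rho^{2})^{|\alpha(j)|}$ decays geometrically (as $M\rho^{2}<1$), the series \eqref{eq:conditionconvergence_bis} converges. Theorem~\ref{thm:principalresult} then yields GUAS of \eqref{eq:switchonpoly4} on $\mathbb{D}^{n}(0,\rho)$, and the Lyapunov function is the one given by \eqref{eq:lyapunovnonlinear}.

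The most delicate step is the verification of $Q^{(i)}_{jk}<1$ in the same-degree regime at higher total degrees: the combinatorial prefactor $\alpha_{l}(k)$ in \eqref{eq:total_degree_bloc} inflates $|A^{(i)}_{kj}|$ compared with $|[J\widehat F^{(i)}(0)]_{lr}|$, so the weights within each degree-$d$ block must be tuned to absorb this prefactor while keeping the row-sum bound $\sum_{k}b^{(i)}_{jk}\le 1$ uniform in $d$. The joint use of \eqref{eq:diagonal_dominant} (controlling the geometric mean of $|\Re(\tilde\lambda_{l})|$ and $|\Re(\tilde\lambda_{r})|$) and \eqref{eq:diagonal_dominant2} (providing an $|\Re(\tilde\lambda_{l})|$-only bound that survives multiplication by $\alpha_{l}(k)$ via $\alpha_{l}(k)|\Re(\tilde\lambda_{l})|\le|\Re(\lambda_{k})|$) is what makes this possible.
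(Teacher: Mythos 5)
Your proposal follows essentially the same route as the paper's proof: the same splitting of off-diagonal pairs into same-degree and cross-degree regimes, the same uniform weight $\xi/(n^{2}-n)$ for the at most $n(n-1)$ same-degree pairs per row together with the identity $|\Re(\lambda_k)|=\sum_s\alpha_s(k)|\Re(\tilde\lambda_s)|$ and both diagonal-dominance hypotheses to get $Q^{(i)}_{jk}<1$ there, the same $\kappa/2$ row/column-normalized weights in the cross-degree regime collapsing $Q^{(i)}_{jk}$ to the quantity in \eqref{eq:cond_corol2}, and the same choice $\epsilon_j=\mathcal{O}(Q^{|\alpha(j)|})$ making \eqref{eq:conditionconvergence_bis} converge. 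The argument is correct and matches the paper's; your remarks on the polynomial growth of the number of degree-$d$ monomials are in fact slightly more explicit than the paper's own justification of the convergence step.
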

				
				\begin{proof}
					We will denote by $N\stackrel{\text{def}}{=} (n^2-n)/2$ the number of upper off-diagonal entries of the Jacobian matrices $J\widehat{F}^{(i)}(0)$.
					It follows from Lemma \ref{lemmepreuve2}, Corollary \ref{corol:jacobian_solvable}, and Remark \ref{rem:assmp2} that Assumptions \ref{assumption:general}-\ref{assumption:lower_triangular} hold. Moreover, it is clear that $\langle L_{\widehat{F}^{(i)}} e_j, e_j\rangle<0$ for all $j$ and $i$ (see Remark \ref{rem:eigen}).
					The result follows from Theorem \ref{thm:principalresult} with the sequence
					\begin{equation}\label{eq:compensate_analytic}
						\begin{cases}b^{(i)}_{jj}=(1-\xi-\kappa)\\ b^{(i)}_{jk}=\dfrac{\xi}{2N} & \textrm{if } j\neq k \textrm{ with }  |\alpha(j)|=|\alpha(k)|  \textrm{, and if }\left\langle  L_{\widehat{F}^{(i)}}\widehat{e}_k,\widehat{e}_j\right\rangle \neq 0 \, \textrm{ or } \left\langle  L_{\widehat{F}^{(i)}}\widehat{e}_j,\widehat{e}_k\right\rangle \neq 0 \\
							b^{(i)}_{jk}=0 & \textrm{if } |\alpha(j)|=|\alpha(k)|, \left\langle  L_{\widehat{F}^{(i)}}\widehat{e}_k,\widehat{e}_j\right\rangle = 0 \, \textrm{ and } \left\langle  L_{\widehat{F}^{(i)}}\widehat{e}_j,\widehat{e}_k\right\rangle = 0 \\
							b^{(i)}_{jk}=\dfrac{\kappa}{2} \dfrac{\left| \left\langle L_{\widehat{F}^{(i)}} \widehat{e}_k,\widehat{e}_j\right\rangle \right|}{\sum_{\ell=1}^{\infty}\left| \left\langle  L_{\widehat{F}^{(i)}}\widehat{e}_\ell,\widehat{e}_j\right\rangle \right|} & \mbox{if}\,  |\alpha(k)|< |\alpha(j)|\\
							b^{(i)}_{jk}=\dfrac{\kappa}{2} \dfrac{\left| \left\langle  L_{\widehat{F}^{(i)}} \widehat{e}_j, \widehat{e}_k\right\rangle \right|}{\sum_{\ell=1}^{\infty}\left| \left\langle L_{\widehat{F}^{(i)}}\widehat{e}_j,\widehat{e}_\ell \right\rangle \right|} & \mbox{if}\,   |\alpha(k)|> |\alpha(j)| \end{cases}
					\end{equation}
					with $\xi\in ]0,1[$ and $\kappa\in ]0,1[$. It follows from \eqref{eq:total_degree_bloc_hardy} in Remark \ref{rem:linear_vector_field} and the fact that the Jacobian matrices $J\widehat{F}^{(i)}(0)$ are upper triangular that, for a fixed $j$ and all $k \neq j$ with $|\alpha(k)|= |\alpha(j)|$, there are at most $N$ nonzero values $\left\langle  L_{\widehat{F}^{(i)}}\widehat{e}_k,\widehat{e}_j\right\rangle$ and at most $N$ nonzero values $\left\langle  L_{\widehat{F}^{(i)}}\widehat{e}_j,\widehat{e}_k\right\rangle$. Therefore, the sequence $b^{(i)}_{jk}$ satisfies 
					\begin{equation*}
						\sum_{k=1}^\infty b^{(i)}_{jk} <(1-\xi-\kappa)+ \xi + \dfrac{\kappa}{2}    \dfrac{\sum_{k=1}^j \left| \left\langle L_{\widehat{F}^{(i)}} \widehat{e}_k,\widehat{e}_j\right\rangle \right|}{\sum_{\ell=1}^{\infty}\left| \left\langle  L_{\widehat{F}^{(i)}}\widehat{e}_\ell,\widehat{e}_j\right\rangle \right|} + \dfrac{\kappa}{2}    \dfrac{\sum_{k=j+1}^\infty \left| \left\langle L_{\widehat{F}^{(i)}} \widehat{e}_j,\widehat{e}_k\right\rangle \right|}{\sum_{\ell=1}^{\infty}\left| \left\langle  L_{\widehat{F}^{(i)}}\widehat{e}_j,\widehat{e}_\ell\right\rangle \right|} < 1.
					\end{equation*}
					The elements $Q^{(i)}_{jk}$ of the double sequence \eqref{eq:double_sequence} are given by
					\begin{equation}\label{eq:maxim2_analytic}
						Q^{(i)}_{jk}= \begin{cases}
							\dfrac{D^2\left| \left\langle  L_{\widehat{F}^{(i)}}\widehat{e}_k,\widehat{e}_j\right\rangle \right|^2}{\xi^2 \, \left|\Re\left(\left\langle L_{\widehat{F}^{(i)}}\widehat{e}_j, \widehat{e}_j \right\rangle\right)\right|\left|\Re\left(\left\langle L_{\widehat{F}^{(i)}} \widehat{e}_k, \widehat{e}_k \right\rangle\right)\right|}&\, \textrm{if } |\alpha(j)|=|\alpha(k)|  \\
							\dfrac{\sum_{\ell=1}^{\infty}\left| \left\langle  L_{\widehat{F}^{(i)}}\widehat{e}_\ell,\widehat{e}_j\right\rangle \right| \sum_{\ell=1}^{\infty}\left| \left\langle  L_{\widehat{F}^{(i)}}\widehat{e}_k,\widehat{e}_\ell\right\rangle \right|}{\kappa^2\left|\Re\left(\left\langle  L_{\widehat{F}^{(i)}} \widehat{e}_j, \widehat{e}_j \right\rangle\right)\right|\left|\Re\left(\left\langle   L_{\widehat{F}^{(i)}} \widehat{e}_k,  \widehat{e}_k \right\rangle\right)\right|} & \textrm{if } |\alpha(k)|\neq |\alpha(j)| \, \textrm{ and }\left\langle L_{\widehat{F}^{(i)}} \widehat{e}_k,  \widehat{e}_j \right\rangle\neq 0 \\
							0 & \textrm{otherwise.}
						\end{cases}
					\end{equation}
					We note that $\sum_{\ell=1}^{\infty}\left| \left\langle L_{\widehat{F}^{(i)}}\widehat{e}_\ell, \widehat{e}_j \right\rangle \right| $ and $ \sum_{\ell=1}^{\infty}\left| \left\langle  L_{\widehat{F}^{(i)}}\widehat{e}_k,\widehat{e}_\ell\right\rangle \right|$ are finite according to the assumption. 
					
					Next, we show that the conditions \eqref{eq:diagonal_dominant} and \eqref{eq:diagonal_dominant2} imply that $Q^{(i)}_{jk}<1$ if $|\alpha(j)| = |\alpha(k)|$. Indeed, it follows from \eqref{eq:total_degree_bloc_hardy} and \eqref{eq:maxim2_analytic} that this latter inequality is equivalent to
					\begin{equation*}
						\alpha^2_q(k) |[J\widehat{F}^{(i)}(0)]_{qr}|^2 < \frac{\xi^2}{N^2} \left|\sum_{\ell=1}^n \alpha_\ell(j) \Re([J\widehat{F}^{(i)}(0)]_{\ell\ell}) \right| \left| \sum_{\ell=1}^n \alpha_\ell(k) \Re([J\widehat{F}^{(i)}(0)]_{\ell\ell}) \right|
					\end{equation*}
					for all $j>k$ such that $\alpha(j)=(\alpha_1(k),\cdots, \alpha_{q}(k)-1,\cdots, \alpha_{r}(k)+1, \cdots,\alpha_n(k))$ for some $q<r$. Since the diagonal entries of the (upper-triangular) Jacobian matrices $J\widehat{F}^{(i)}(0)$ are the eigenvalues and therefore have negative real parts, the most restrictive case is obtained with $\alpha_\ell(k)=0$ for all $l\neq q$, which yields
					\begin{equation*}
						\alpha^2_q(k) |[J\widehat{F}^{(i)}(0)]_{qr}|^2 < \frac{\xi^2}{N^2} \left| (\alpha_q(k)-1)  \Re([J\widehat{F}^{(i)}(0)]_{qq})+ \Re([J\widehat{F}^{(i)}(0)]_{rr})\right| \left| \alpha_q(k) \Re([J\widehat{F}^{(i)}(0)]_{qq}) \right|.
					\end{equation*}
					When $\alpha_q(k)=1$, this inequality is equivalent to \eqref{eq:diagonal_dominant}. When $\alpha_q(k)>1$, we can rewrite
					\begin{equation*}
						\begin{split}
							(\alpha_q(k)-1) & |[J\widehat{F}^{(i)}(0)]_{qr}|^2 +  |[J\widehat{F}^{(i)}(0)]_{qr}|^2 \\
							&< \frac{\xi^2}{N^2} \left( (\alpha_q(k)-1)  \left|\Re([J\widehat{F}^{(i)}(0)]_{qq})\right|^2+ \left| \Re([J\widehat{F}^{(i)}(0)]_{rr})\right| \left| \Re([J\widehat{F}^{(i)}(0)]_{qq}) \right| \right).
						\end{split}
					\end{equation*}
					Using \eqref{eq:diagonal_dominant}, we have that the above inequality is satisfied if
					\begin{equation*}
						(\alpha_q(k)-1) |[J\widehat{F}^{(i)}(0)]_{qr}|^2 < \frac{\xi^2}{N^2} (\alpha_q(k)-1)  \left|\Re([J\widehat{F}^{(i)}(0)]_{qq})\right|^2,
					\end{equation*}
					which is equivalent to \eqref{eq:diagonal_dominant2}.
					
					While $Q^{(i)}_{jk}<1$ for $|\alpha(j)| = |\alpha(k)|$, it is easy to see that $Q^{(i)}_{jk}>1$ for $|\alpha(j)| > |\alpha(k)|$. The condition \eqref{eq:cond_corol2} therefore implies that 
					$\max_{i=1,\cdots,m} \limsup_{j \in \mathbb{N}} \max_{k=1,\dots,j-1} Q^{(i)}_{jk} \stackrel{\text{def}}{=}Q < 1/\rho^2$ and \eqref{eq:defineepsilon} yields
					\begin{equation}
						\label{eq:epsilon_diag}
						\epsilon_j \sim \max_{k\in \mathcal{K}_j} \left\{\epsilon_k \, Q \right\}
					\end{equation}
					for $j\gg 1$, with 
					$$\mathcal{K}_j=\{k \in{1,\dots,j-1} : \left\langle L_{\widehat{F}^{(i)}} \widehat{e}_k,  \widehat{e}_j \right\rangle \neq 0 \textrm{ for some } i \in\{1,\dots,m\}  \textrm{ and } |\alpha(k)| < |\alpha(j)|\}.$$
					Hence, the sequence \eqref{eq:epsilon_diag} leads to $\epsilon_j=\mathcal{O}(Q^{|\alpha(j)|})$. It follows that \eqref{eq:conditionconvergence_bis} is convergent on $S=P^{-1} \left(\mathbb{D}^n\right)\subseteq \mathbb{D}^n $ (or on $S=P^{-1}\left(\mathbb{X}^n\right)\subseteq \mathbb{X}^n$) and Lemma \ref{lemmepreuve2} implies that \eqref{eq:lemmepreuve2_serie1} and \eqref{eq:lemmepreuve2_serie2} are convergent on $S$. Theorem \ref{thm:principalresult} implies that the switched  system  $\left\lbrace \dot{\widehat{z}} =\widehat{F}^{(i)}(z) \right\rbrace_{i=1}^m$ is GUAS on $S$  and then the system \eqref{eq:switchonpoly4} is GUAS on $\mathbb{D}^n(\rho)$ (or on $\mathbb{X}^n(\rho)$).
					
				In addition, from Theorem \ref{thm:principalresult} we deduce that
						\begin{equation*}
							\widehat{V}(z)=\sum_{k=1}^\infty \epsilon_k \left\vert  \widehat{z}^{\alpha (k)}\right\vert^2
						\end{equation*}
						is a CLF of $\left\lbrace \dot{\widehat{z}} =\widehat{F}^{(i)}(z) \right\rbrace_{i=1}^m$ on $S$.
						If we define $V=\widehat{V}\circ P^{-1}:\mathbb{D}^n\left( \rho\right)\rightarrow \mathbb{R}^+$ (or $\mathbb{X}^n\left(\rho\right)\rightarrow \mathbb{R}^+$), we have 
						the CLF 
						\begin{equation*}
							V(z)=\sum_{k=1}^\infty \epsilon_k \left\vert \left(P^{-1}z\right)^{\alpha (k)}\right\vert^2
						\end{equation*}
						for the switched nonlinear system \eqref{eq:switchonpoly4}.
				\end{proof}
			
					\begin{remark}\label{rem:weights_2}
						The results of Corollaries \ref{coroll:poly} and \ref{coroll:gen2} could potentially be extended to other RKHSs of analytic functions, with a basis of weighted monomials. To this end, the sequences $b^{(i)}_{jk}$ should be adapted to the weighting coefficients $\omega_\alpha$ of the monomials, while still satisfying the conditions requested in Theorem \ref{thm:principalresult}.
					\end{remark}

				For the particular case where the Jacobian matrices $JF^{(i)}(0)$ are simultaneously diagonalizable (i.e. they are diagonalizable and they commute), the diagonal dominance conditions \eqref{eq:diagonal_dominant} and \eqref{eq:diagonal_dominant2} are trivially satisfied. We should mention that the Lie-algebraic property of commutation is only needed for the Jacobian matrices $JF^{(i)}(0)$, an assumption which contrasts with the commutation property imposed on vector fields in \cite{mancilla2000condition}, \cite{Shim2001CommonLF} and \cite{vu2005common}.
				
				\begin{remark}\label{remark:estimate_basin_attraction_2}
					Similarly to Remark \ref{remark:estimate_basin_attraction_1}, we can use the (truncated) series
					\begin{equation}\label{clf_examples_3}
						V(z)=\sum_{k= 1}^{k_{max}} Q^{2|\alpha(k)|}\left|\left(P^{-1}z\right)^{\alpha(k)}\right|^{2}
					\end{equation}
					obtained in Corollary \ref{coroll:gen2} as a candidate CLF in order to estimate the region of attraction. Again, the invariance of the flow over the polydisk (or hypercube) is not required.
				\end{remark}
				
				\begin{remark}\label{rem:nonpoly_dim1}
					In the case $n=1$, we recover the trivial GUAS property of switched systems from Corollary \ref{coroll:gen2}. Indeed, consider the vector fields $F^{(i)}(z)= \sum_{k= 1}^{\infty} a^{(i)}_{k} z^k$ on $\mathbb{D}$ (or on $]-1,1[$), with $\sum_{k= 1}^{\infty} \vert a^{(i)}_{k}\vert <\infty$, and assume that the subsystems have a globally stable equilibrium at the origin. 
					The Lie-algebra generated by the scalars $JF^{(i)}(0)$ is trivially solvable and $\mathbb{D}(\rho)$ (or on $]-\rho,\rho[$) is forward invariant for all $\rho$. Moreover, the conditions \eqref{eq:diagonal_dominant} and \eqref{eq:diagonal_dominant2} are trivially satisfied. Then Corollary \ref{coroll:gen2} implies that the switched system is GUAS on $\mathbb{D}(\rho)$  (or on $]-\rho,\rho[$) for $\rho \in ]0,1]$ which satisfies \eqref{eq:cond_corol2}.
					It follows from \eqref{eq:koop_matrix1_hardy} that
					\[\sum_{\ell=1}^{\infty}\left| \left\langle L_{F^{(i)}}e_\ell, e_j\right\rangle \right| =\sum_{\ell=1}^j \ell\vert a^{(i)}_{{j-\ell+1}}\vert=\sum_{\ell=1}^j (j-\ell+1)\vert a^{(i)}_{\ell} \vert,\]
					\[\sum_{\ell=1}^{\infty}\left| \left\langle  L_{F^{(i)}}e_k,e_\ell\right\rangle \right| =k\sum_{\ell=1}^\infty \vert a^{(i)}_{\ell}\vert,\]
					and $\left| \Re\left(\left\langle e_j,  L_{F^{(i)}}e_j\right\rangle \right) \right|=j\left| \Re (a^{(i)}_{1})\right| $.
					With $\kappa$ arbitrarily close to $1$ (since $\xi$ can be taken arbitrarily small in \eqref{eq:diagonal_dominant} and \eqref{eq:diagonal_dominant2}), condition \eqref{eq:cond_corol2} is rewritten as
					\begin{equation*}
						\max_{i=1,\cdots,m} \limsup_{j \in \mathbb{N}} \frac{\sum_{\ell= 1}^{j} (j-\ell+1) \vert a^{(i)}_{\ell}\vert   \sum_{\ell= 1}^{\infty} \vert a^{(i)}_{\ell} \vert }{j\left|\Re(a^{(i)}_{1})\right|^2 } < \frac{1}{\rho^2}
					\end{equation*}
					and, using $(j-\ell+1) \vert a^{(i)}_{\ell}\vert \leq j \vert a^{(i)}_{\ell}\vert $ for all $\ell\leq j$, we obtain
					\begin{equation*}
						\rho < \min_{i=1,\cdots,m} \frac{\sum_{\ell= 1}^{\infty} \vert a^{(i)}_{\ell}\vert}{\left|\Re(a^{(i)}_{1})\right|}.
					\end{equation*}
				\end{remark}

				\section{Examples}
				\label{sec:illustra}
				
				This section presents two examples that illustrate our results. We will focus on specific cases that satisfy the assumptions of Corollaries \ref{coroll:poly} and \ref{coroll:gen2} and, without loss of generality, we will directly consider Jacobian matrices in triangular form.%\footnote{\review{In this case the matrix $P$ in the corollaries is the identity matrix.}}

				\subsection{Illustration 1: polynomial vector fields }
				
				Consider the vector fields on the square $\mathbb{X}^2$
				
				\begin{equation}
					\label{eq:illustration_1}
					\begin{array}{rcl}
						F^{(1)}(x_1,x_2) & = & \left(-x_1, - x_2-0.1x_1^3 \right) \\
						F^{(2)}(x_1,x_2) & = & \left(-x_1,- x_2-0.1x_1^4\right).
					\end{array}
				\end{equation}
				
				%\begin{equation}\label{eq:illustration_1}
				%F^{(1)}(x_1,x_2)=\begin{cases}- ax_1\\- ax_2+bx_1^3\end{cases}\, \mbox{and}\,\, F^{(2)}(x_1,x_2)=\begin{cases}-ax_1\\-a x_2+cx_1^4\end{cases} \end{equation}
			%where $a=1$ and $b=c=-0.1$. %$0< |c|\leq |b|\leq 1$ and $a\geq 1$.
			%where $b=0.99$.
			For both subsystems, the origin is the unique equilibrium and it is globally asymptotically stable.
			The square $\mathbb{X}^2$ is invariant with respect to the flows of $F^{(i)}$. Indeed, for all $x\in \partial \mathbb{X}^2$ (i.e. $|x_\ell|=1$ for some $l$), one has to verify that $F^{(i)}_\ell(x) \, x_\ell \leq 0$. We have
			\begin{itemize}
				%\item $|x_\ell|=1 \Rightarrow F^{(1)}_\ell(x)  x_\ell=-1\leq 0$,
				\item $|x_1|=1 \Rightarrow  F^{(1)}_1(x) x_1=-x_1^2 =-1< 0$,
				\item $|x_2|=1 \Rightarrow F^{(1)}_2(x) x_2 =-x_2^2 -0.1x_1^3x_2=-1-0.1x_1^3x_2< 0$, since $|x_1|^3<1$.% and $|b|\leq 1$.
			\end{itemize}
			The same result follows for $F^{(2)}$.
			
			According to \eqref{eq:koop_matrix1_hardy},
			the entries of the Koopman matrices  $\bar{L}_{F^{(1)}}$ and $\bar{L}_{F^{(2)}}$ are given by 
			\[\langle  L_{F^{(1)}}e_k, e_j \rangle= \begin{cases} -\left|\alpha(j)\right| & \mbox{if } k=j 
				\\
				-0.1\left(\alpha_2(j)+1\right) & \mbox{if } \alpha_1(k) =\alpha_1(j)-3\geq 0 \mbox{ and }  \alpha_2(k)=\alpha_2(j)+1\\
				0& \mbox{otherwise }
			\end{cases}
			\]
			and 
			\[\langle  L_{F^{(2)}}e_k, e_j \rangle= \begin{cases} -\left|\alpha(j)\right|  & \mbox{if } k=j 
				%\\ \, b\left(\alpha_1(j)+1\right)  & \mbox{if } \alpha_1(k) =\alpha_1(j)+1 \mbox{ and }  \alpha_2(k)=\alpha_2(j)-1\geq 0 
				\\
				-0.1\left(\alpha_2(j)+1\right) & \mbox{if } \alpha_1(k) =\alpha_1(j)-4\geq 0 \mbox{ and }  \alpha_2(k)=\alpha_2(j)+1\\
				0& \mbox{otherwise }.
			\end{cases}
			\]
			%Since $\langle  L_{F^{(1)}}e_k, e_j \rangle$=0 for all $k\neq j$, we have $Q^{(1)}_{jk}=0$ for all $k,j$ in \eqref{eq:maxim1}. 
			Moreover, $K^{(1)}=K^{(2)}=1$ and the condition \eqref{eq:maxim1} can be rewritten as
			\[Q=\limsup_{\substack{j \in \mathbb{N} \\ |\alpha(j)|>4}} \max \left\{\dfrac{0.01\left(\alpha_1(j)+1\right)^2}{\left|\alpha(j)\right|\left(\left|\alpha(j)\right|-3\right)  }, \dfrac{0.01\left(\alpha_2(j)+1\right)^2 }{\left|\alpha(j)\right|  \left(\left|\alpha(j)\right|-4\right)  } \right\}=0.01.
			\]
			Hence, it follows from Corollary \ref{coroll:poly} that the switched system \eqref{eq:illustration_1} is GUAS in $\mathbb{X}^2$. 
			As explained in Remark \ref{remark:estimate_basin_attraction_1}, we have the following candidate CLF 
			\begin{equation}\label{clf_example_2}
				V(x)=\sum_{k= 1}^{k_{max}} \epsilon_k\left|x^{\alpha(k)}\right|^{2},
			\end{equation}
			where $\epsilon_1=1$ and $\epsilon_k=Q$ for all $k>1$.
			The largest level set of $V$ lying in the region where $\dot{V}<0$ provides an inner approximation of the region of attraction of the origin. As shown in Figure \ref{fig:compare_1}, the approximation is larger than the one obtained with a quadratic CLF %\footnote{The quadratic CLF is computed by using the Matlab function  \emph{solvesdp} from the toolbox YALMIP (see \url{https://www.dcsc.tudelft.nl/~bdeschutter/disc_hs/download/Solving_LMIs_using_MPT3.pdf}  )} 
			computed for the linearized switched system.
			
			\begin{figure}[ht]
				\begin{center}
					\includegraphics[scale=0.55]{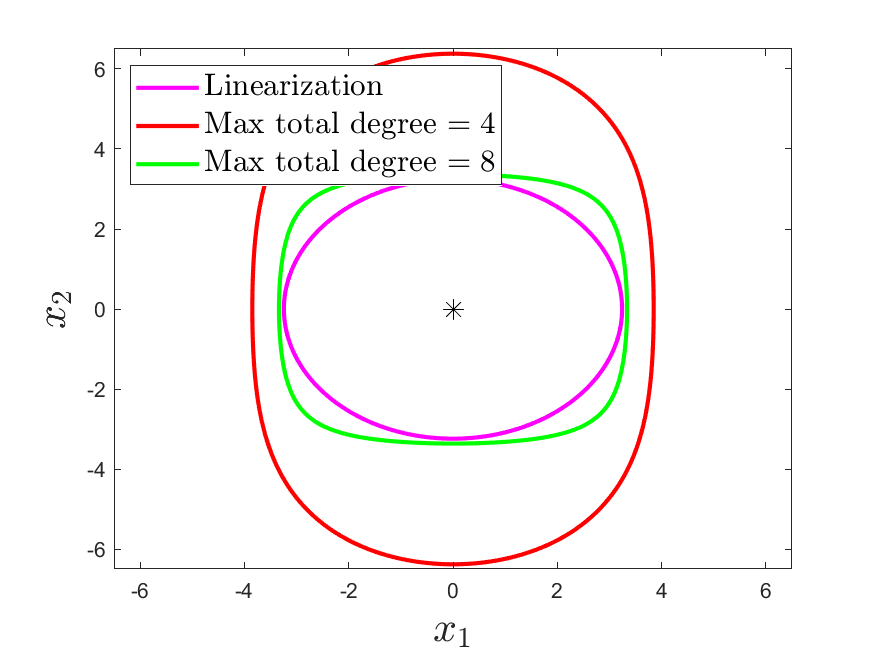}
					\caption{For the switched system \eqref{eq:illustration_1}, the region of attraction is estimated with the quadratic CLF $V(x_1,x_2)=x_1^2+x_2^2$ (magenta curve), with the CLF \eqref{clf_example_2} truncated to the total degree 4 (i.e. $k_{max}=5$) (red curve), and with \eqref{clf_example_2} truncated to the total degree 8 (i.e. $k_{max}=14$) (green curve).}
					\label{fig:compare_1}
				\end{center}
			\end{figure}

			\subsection{Illustration 2: analytic vector fields} The following example is inspired from \cite{angeli2000note} and \cite{liberzon2003switching}.

			Consider the switched system defined by the vector fields 
			\begin{equation}
				\label{eq:illust2}
				\begin{array}{rcl}
					F^{(1)}(x_1,x_2) & = & \left(-  x_1 +\eta \sin^2(x_1) x_2^2,- x_2 \right) \\
					F^{(2)}(x_1,x_2) & = & \left(-  x_1 +\eta\cos^2(x_1) x_2^2,-x_2\right),
				\end{array}
			\end{equation}
			with $\eta\in ]0, 1]$. 
			Both subsystems possess an equilibrium at the origin, which is globally asymptotically stable in $\mathbb{R}^2$ (see similar arguments as in \cite{liberzon2003switching}). For all $\rho\leq 1$, the square $\mathbb{X}^2(\rho)$ is invariant with respect to the flows of $F^{(i)}$. Indeed, we have
			\begin{itemize}
				\item $|x_1|=\rho \Rightarrow F^{(1)}_1(x)x_1=-\rho+\eta x_1x_2^2\sin^2(x_1)< 0$ since
				\[\left|\eta x_1x_2^2\sin^2(x_1)\right|  \leq \eta \rho\left|x_2^2\right| <\eta \rho^3\leq \rho.\]
				\item $|x_2|=1 \Rightarrow F^{(1)}_2(x)x_2=-\rho<0$.
			\end{itemize}
			The same result follows for $F^{(2)}$.
			The Taylor expansion of the vector fields yields
			\begin{eqnarray*}
				F^{(1)}(x) & = & \left(-x_1 + \eta \sum_{p=1}^\infty\dfrac{(-1)^{p+1}2^{2p-1}}{(2p)!}x_1^{2p} x_2^2,-x_2\right) \\
				F^{(2)}(x) & = & \left(-x_1 + \eta x_2^2+ \eta \sum_{p=1}^\infty\dfrac{(-1)^{p}2^{2p-1}}{(2p)!}x_1^{2p} x_2^2,-x_2 \right).
			\end{eqnarray*}
			According to \eqref{eq:koop_matrix1_hardy}, the entries of the Koopman matrices $\bar{L}_{F^{(1)}}$ and $\bar{L}_{F^{(2)}}$ are given by
			\[\langle  L_{F^{(1)}}e_k, e_j \rangle= \begin{cases} -\left|\alpha(k)\right|  & \mbox{if } k=j \\
				\eta \alpha_1(k)\,  \dfrac{(-1)^{p+1}2^{2p-1}}{(2p)!}  & \mbox{if } \alpha_1(k)=\alpha_1(j)+1-2p\geq 0 \text{ and }\alpha_2(k) =\alpha_2(j)-2\geq 0 \\ 
				0& \mbox{otherwise }
			\end{cases}
			\]
			and
			\[\langle  L_{F^{(2)}}e_k, e_j \rangle= \begin{cases} -\left|\alpha(k)\right|  & \mbox{if } k=j \\
				\eta \alpha_1(k)   & \mbox{if } \alpha_1(k)=\alpha_1(j)+1 \text{ and }\alpha_2(k) =\alpha_2(j)-2\geq 0 \\ 
				\eta \alpha_1(k) \, \dfrac{(-1)^{p}2^{2p-1}}{(2p)!}  & \mbox{if } \alpha_1(k)=\alpha_1(j)+1-2p\geq 0 \text{ and }\alpha_2(k) =\alpha_2(j)-2\geq 0 \\ 
				0& \mbox{otherwise }
			\end{cases}
			\]
			where $ p=1,\cdots ,  \left\lfloor \frac{\alpha_1(j)-1}{2} \right\rfloor$.
			This implies that we have 
			\begin{equation}
				\label{eq:term_illustrat2_1}
				\begin{array}{l}
					\sum_{\ell=1}^{\infty}\left| \left\langle  L_{F^{(1)}}e_\ell, e_j\right\rangle \right| = \left|\alpha(j)\right|+ \eta \sum_{\ell=1}^{  \lfloor\frac{\alpha_1(j)-1}{2}\rfloor}  \dfrac{(\alpha_1(j)+1-2l) 2^{2l-1}}{(2l)!} \\
					\sum_{\ell=1}^{\infty}\left| \left\langle L_{F^{(2)}}e_\ell, e_j\right\rangle \right| 
					= \left|\alpha(j)\right|+\eta \left(\alpha_1(j)+1\right)+\eta\sum_{\ell=1}^{ \lfloor\frac{\alpha_1(j)-1}{2}\rfloor} \dfrac{(\alpha_1(j)+1-2l) 2^{2l-1}}{(2l)!} \\
					\sum_{\ell=1}^{\infty}\left| \left\langle  L_{F^{(1)}}e_{k},e_\ell\right\rangle \right| = \left|\alpha(k)\right| + \eta \alpha_1(k) \sum_{p=1}^\infty \dfrac{2^{2p-1}}{(2p)!} = \left|\alpha(k)\right| + \eta \alpha_1(k) \frac{\cosh(2)-1}{2} \\
					\sum_{\ell=1}^{\infty}\left| \left\langle  L_{F^{(2)}}e_{k},e_\ell\right\rangle \right|= \left|\alpha(k)\right|  + \eta \alpha_1(k) \left(1+  \sum_{p=1}^\infty \dfrac{2^{2p-1}}{(2p)!}  \right)=\left|\alpha(k)\right| + \eta \alpha_1(k)\frac{\cosh(2)+1}{2}.
				\end{array}
			\end{equation}
			Since the Jacobian matrices $JF^{(i)}(0)$ are diagonal, the conditions \eqref{eq:diagonal_dominant} and \eqref{eq:diagonal_dominant2} are trivially satisfied (with $\xi$ arbitrarily small). Moreover, we observe from \eqref{eq:term_illustrat2_1} that $\sum_{\ell=1}^{\infty}\left| \left\langle  L_{F^{(1)}}e_\ell, e_j\right\rangle \right|\leq \sum_{\ell=1}^{\infty}\left| \left\langle  L_{F^{(2)}}e_\ell, e_j\right\rangle \right|$ and $\sum_{\ell=1}^{\infty}\left| \left\langle  L_{F^{(1)}}e_{k},e_\ell\right\rangle \right|\leq \sum_{\ell=1}^{\infty}\left| \left\langle  L_{F^{(2)}}e_{k},e_\ell\right\rangle \right|$ for all $k,j$. It follows that, with $\kappa$ arbitrarily close to $1$, condition \eqref{eq:cond_corol2} can be rewritten as
			\begin{equation*}
				\limsup_{j \in \mathbb{N}} \max_{k=1,\dots,j-1} \dfrac{\sum_{\ell=1}^{\infty}\left| \left\langle  L_{F^{(2)}}e_\ell,e_j\right\rangle \right| \sum_{\ell=1}^{\infty}\left| \left\langle  L_{F^{(2)}}e_k,e_\ell\right\rangle \right|}{\left|\Re\left(\left\langle  L_{F^{(2)}} e_j, e_j \right\rangle\right)\right|\left|\Re\left(\left\langle   L_{F^{(2)}} e_k,  e_k \right\rangle\right)\right|}<\frac{1}{\rho^2 },
			\end{equation*}
			which is verified for $\rho=\left(1+\frac{\eta}{2}  \left( \cosh(2)+1\right)\right)^{-1}$. Indeed,
			from \eqref{eq:term_illustrat2_1}, we have
			\begin{eqnarray*}
				\sum_{\ell=1}^{\infty}\left| \left\langle L_{F^{(2)}}e_\ell, e_j\right\rangle \right| 
				&<&\left|\alpha(j)\right|+\eta \left(\alpha_1(j)+1\right)\left(1+ \sum_{\ell=1}^{\infty}  \dfrac{2^{2l-1}}{(2l)!}\right)\\
				&=&\left|\alpha(j)\right|+ \eta \left(\alpha_1(j)+1\right)\frac{\cosh(2)+1}{2}\\
				&\leq &\left|\alpha(j)\right|\left(1+\frac{\eta}{2}  \left( \cosh(2)+1\right)\right).
			\end{eqnarray*}
			and 
			\begin{equation*}
				\sum_{\ell=1}^{\infty}\left| \left\langle  L_{F^{(2)}}e_{k},e_\ell\right\rangle \right| \leq \left|\alpha(k)\right|\left(1+\frac{\eta}{2}  \left( \cosh(2)+1\right)\right).
			\end{equation*}
			It follows from Corollary \ref{coroll:gen2} that the switched system is GUAS on $\mathbb{X}^2(\rho)$. See Figure \ref{fig:ex3} for the different values of $\rho$ depending on $\eta$.
			
			\begin{figure}[ht]
				\begin{center}
					\includegraphics[scale=0.55]{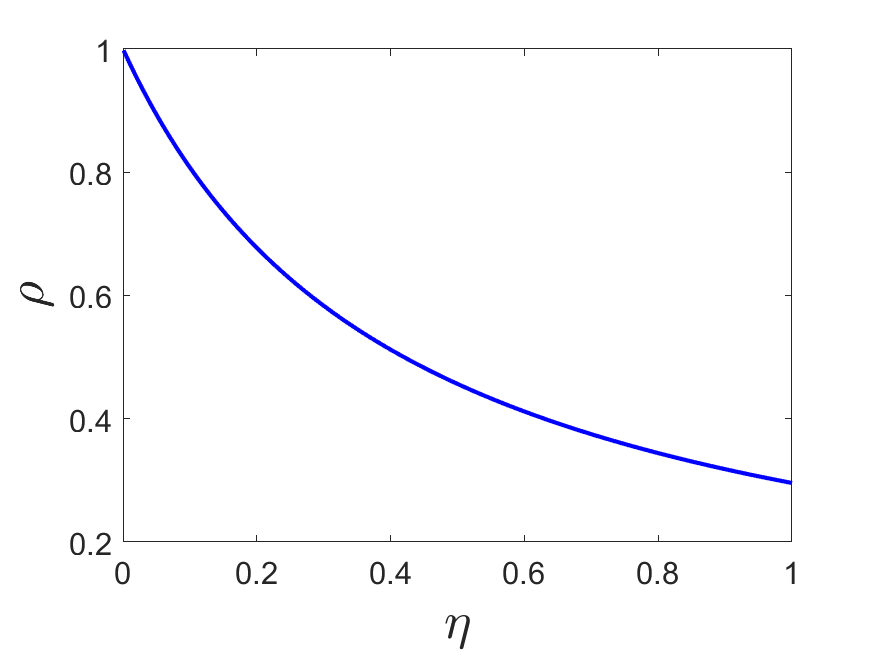} 
					\caption{The switched system is shown to be GUAS on a hypercube of edge length $2\rho$ that depends on the parameter $\eta$.}
					\label{fig:ex3}
				\end{center}
			\end{figure}
			
			As shown in Remark \ref{remark:estimate_basin_attraction_2}, we have the candidate CLF
			\begin{equation}\label{clf_example_3}
				V(x)=\sum_{k=1}^{k_{max}} Q^{2|\alpha(k)|}\left|x^{\alpha(k)}\right|^{2}
			\end{equation}
			where $Q=1/\rho$. The corresponding estimation of the region of attraction is shown in Figure \ref{fig:compare_2} and can be compared to the (smaller) approximation obtained with a quadratic CLF %\footnote{We use again the Matlab function  \emph{solvesdp}.}  
			computed for the linearized switched system.
			
			\begin{figure}[ht]
				\begin{center}
					\includegraphics[scale=0.55]{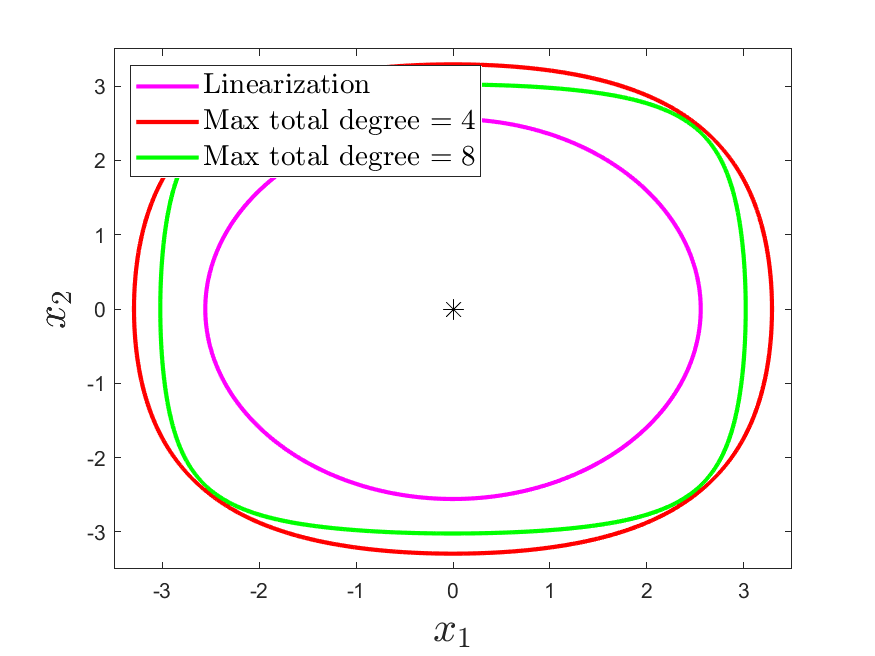}
					\caption{For the switched system \eqref{eq:illust2} with $\eta=1$, the region of attraction is estimated with the quadratic CLF $V(x_1,x_2)=x_1^2+x_2^2$ (magenta curve), with the CLF \eqref{clf_example_3} truncated to the total degree 4 ($k_{max}=5$) (red curve), and with \eqref{clf_example_3} truncated to the total degree 8 ($k_{max}=14$) (green curve).}
					\label{fig:compare_2}
				\end{center}
			\end{figure}

			\section{Conclusion and perspectives}
			\label{sec:concl}
			
			This paper provides new advances on the uniform stability problem for switched nonlinear systems satisfying Lie-algebraic solvability conditions. First, we have shown that the solvability condition on nonlinear vector fields does not guarantee the existence of a common invariant flag and, instead, we have imposed the solvability condition only on the linear part of the vector fields. Then we have constructed a common Lyapunov functional for an equivalent infinite-dimensional  switched linear system obtained with the adjoint of the Koopman  generator on reproducing kernel Hilbert spaces, and more specifically on the Hardy space of the polydisk (or on the real hypercube). Finally we have derived a common Lyapunov function via evaluation functionals to prove that specific switched nonlinear systems are uniformly globally asymptotically stable on invariant sets. Our results heavily rely on the Koopman operator framework, which appears to be a valid tool to tackle theoretical questions from a novel angle.
			
			We envision several perspectives for future research. First of all, our results could be adapted to other spaces of analytic functions with a basis of weighted monomials. Since our main result encompasses general RKHS, we could also make a step further by considering other spaces than spaces of analytic functions and thereby studying other types of (non-analytic) vector fields. In this context, the conservativeness of our general assumptions (such as the existence of an invariant flag) should be investigated. In the same line, our results apply to specific types of switched nonlinear systems within the frame of Lie-algebraic solvability conditions. They could be extended to more general dynamics, including dynamics that possess a limit cycle or a general attractor. In the same line, the Koopman operator-based techniques developed in this paper could be applied to other types of stability than uniform stability. More importantly, the obtained stability results are limited to bounded invariant sets, mainly due to the convergence properties of the Lyapunov functions and the very definition of the Hardy space on the polydisk (or on the real hypercube). We envision that these results could possibly be adapted to infer global stability in $\mathbb{R}^n$. Finally, our results are not restricted to switched systems and have direct implications in the global stability properties of nonlinear dynamical systems, which will be investigated in a future publication.

			\appendix
			\section{General theorems}
			\label{sec:appendix}
			
			We recall here some general results that are used in the proofs of our results.
			% \begin{proposition}[\cite{chen2016parametric}]\label{prop:vectorfield_on_polydisk}
				%     Let $F :\mathbb{D}^n \rightarrow \mathbb{C}^n$ be holomorphic. Then $F$ is an infinitesimal generator on $\mathbb{D}^n$ if and only if, for all $\ell=1,\cdots, n$ and for all $z\in\mathbb{D}^n $,
				%     \[F_\ell(z)=G_\ell(z)\left(z_\ell-h_\ell\left(z'_\ell\right)\right)\]
				%     where $z'_\ell=\left(z_1,\cdots,z_{\ell-1},z_{\ell+1},\cdots z_n\right) $, $h_\ell:\mathbb{D}^{n-1} \rightarrow \overline{\mathbb{D}}$ is holomorphic, $G_\ell:\mathbb{D}^n \rightarrow \mathbb{C}$ is holomorphic, and $\Re\left(\left( 1-h_\ell\left(z'_\ell\right)\bar z_\ell \right)G_\ell(z)\right)\leq 0$.
				% \end{proposition}
			\begin{theorem}[Maximum Modulus Principle for bounded domains \cite{scheidemann2005introduction}]\label{maxim_modul}
				Let $D \subset  \mathbb{C}^n$ be a bounded domain and $f : \overline{D} \rightarrow \mathbb{C}$ be  a continuous function, whose restriction to $D$ is holomorphic. Then $|f|$ attains a maximum on the boundary  $\partial D $.
			\end{theorem}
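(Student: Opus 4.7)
The plan is to reduce the several-variable statement to the classical one-variable maximum modulus principle via restriction to complex lines. Since $\overline{D^n}$ is closed and bounded in $\mathbb{C}^n$ and $|f|$ is continuous, compactness yields the existence of a point $z_0 \in \overline{D^n}$ attaining the supremum $M = \sup_{z \in \overline{D^n}} |f(z)|$. If $z_0 \in \partial D^n$, there is nothing more to prove, so assume for contradiction that every maximizer lies in $D^n$ and pick such a $z_0$.

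For each unit vector $v \in \mathbb{C}^n$, let $\Omega_v = \{\zeta \in \mathbb{C} : z_0 + \zeta v \in D^n\}$, which is an open subset of $\mathbb{C}$ containing $0$, and set $g_v(\zeta) = f(z_0 + \zeta v)$. Then $g_v$ is holomorphic on $\Omega_v$ and $|g_v|$ attains its maximum value $M$ at the interior point $\zeta = 0$. The one-variable maximum modulus principle then forces $g_v$ to be constant on the connected component of $\Omega_v$ containing $0$. Since $z_0$ is interior to $D^n$, some polydisk around $z_0$ is contained in $D^n$, and letting $v$ range over (at least) a basis of $\mathbb{C}^n$ shows that $f$ is constant on a full $\mathbb{C}^n$-neighborhood of $z_0$.

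To propagate this local constancy to all of $D^n$, define $A = \{z \in D^n : f(z) = f(z_0)\}$. The previous step shows that $A$ is open, while continuity of $f$ makes $A$ closed in $D^n$. Since $D^n$ is a domain (hence connected) and $A$ is nonempty, the usual connectedness argument gives $A = D^n$, so $f \equiv f(z_0)$ on $D^n$. Continuity on $\overline{D^n}$ then extends this to $|f| \equiv M$ on $\overline{D^n}$, and in particular $|f|$ attains $M$ on $\partial D^n$, contradicting the assumption that no maximizer lies on the boundary. The only real subtlety is the reduction from $\mathbb{C}^n$ to $\mathbb{C}$, which is entirely standard via complex lines; the rest is a routine open--closed--connected argument. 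There is thus no serious obstacle, which is consistent with the result being cited from a textbook rather than proved in the body of the paper.
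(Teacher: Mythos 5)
The paper does not prove this statement at all: it appears in Appendix~A as a recalled textbook result, cited from \cite{scheidemann2005introduction}, so there is no in-paper argument to compare yours against. Your proof is the standard reduction to the one-variable maximum modulus principle and is essentially correct: compactness of $\overline{D^n}$ gives a maximizer, an interior maximizer forces local constancy via slicing by complex lines, and the open--closed--connected argument (using that each point of $A$ is itself a maximizer, so the slicing argument re-applies there) propagates constancy to all of $D^n$, whence $|f|\equiv M$ on $\overline{D^n}$ and in particular on the (nonempty, since $D^n$ is bounded) boundary.

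One step is stated in a way that does not literally follow: you claim that letting $v$ range over ``(at least) a basis of $\mathbb{C}^n$'' yields constancy of $f$ on a full neighborhood of $z_0$. Constancy along the $n$ complex lines through $z_0$ in basis directions only controls $f$ on the union of those lines, which is a thin subset of any ball; it does not by itself give constancy on a neighborhood. The fix is either (i) to let $v$ range over \emph{all} unit vectors, since every point of a small ball around $z_0$ lies on some complex line through $z_0$ and the corresponding disk in $\Omega_v$ sits in the component containing $0$; or (ii) to keep only the coordinate directions but iterate: $f$ is constant with $|f|=M$ on the slice $\Delta_1\times\{(z_0^2,\dots,z_0^n)\}$, so every point there is again a maximizer, and one applies the one-variable principle in the second variable at each such point, and so on through a polydisk $\Delta_1\times\cdots\times\Delta_n\subset D^n$. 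With either repair the argument is complete and matches what one finds in the cited reference.
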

			\begin{theorem}[Abel's multidimensional lemma \cite{scheidemann2005introduction} p.36]\label{abel}
				Let $\sum_{\alpha \in \mathbb{N}^n} a_\alpha z^\alpha$ be a power series. If there exist $r\in \mathbb{C}^n$ such that 
				$\sup_{\alpha \in \mathbb{N}^n} \left| a_\alpha r^\alpha\right|<\infty$, then the series $\sum_{\alpha \in \mathbb{N}^n} a_\alpha z^\alpha$ is  normally convergent for all $z\in \mathbb{C}^n$ such that $|z_1|<|r_1|,\, \cdots , |z_n|<|r_n|$.
			\end{theorem}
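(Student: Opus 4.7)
The plan is a direct extension of the one-dimensional Abel lemma, leveraging the product structure of monomials and geometric series. Set $M \stackrel{\text{def}}{=} \sup_{\alpha \in \mathbb{N}^n} |a_\alpha r^\alpha|$, which is finite by hypothesis. Note we may assume $r_j \neq 0$ for every $j$, since otherwise the hypothesis already forces $a_\alpha = 0$ for all $\alpha$ with $\alpha_j \geq 1$ and the argument restricts to the lower-dimensional setting.

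Next I would fix an arbitrary compact subset $K$ of the open polydisk $\Delta \stackrel{\text{def}}{=} \{z \in \mathbb{C}^n : |z_j| < |r_j|,\ j=1,\dots,n\}$. For each $j$, define
\begin{equation*}
q_j \stackrel{\text{def}}{=} \sup_{z \in K} \frac{|z_j|}{|r_j|}.
\end{equation*}
Compactness of $K$ inside $\Delta$ guarantees $q_j < 1$ for every $j$. Then for all $z \in K$ and every $\alpha \in \mathbb{N}^n$, write
\begin{equation*}
|a_\alpha z^\alpha| \;=\; |a_\alpha r^\alpha| \cdot \prod_{j=1}^n \left(\frac{|z_j|}{|r_j|}\right)^{\alpha_j} \;\leq\; M \cdot q^\alpha,
\end{equation*}
where $q^\alpha = q_1^{\alpha_1} \cdots q_n^{\alpha_n}$. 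Thus $\sup_{z \in K} |a_\alpha z^\alpha| \leq M q^\alpha$.

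It then remains to verify that $\sum_{\alpha \in \mathbb{N}^n} q^\alpha$ converges. Using Fubini/Tonelli for nonnegative series (or the standard fact that a multi-indexed series of nonnegative terms factorizes when the summand is a product), we obtain
\begin{equation*}
\sum_{\alpha \in \mathbb{N}^n} q^\alpha \;=\; \prod_{j=1}^n \sum_{\alpha_j = 0}^{\infty} q_j^{\alpha_j} \;=\; \prod_{j=1}^n \frac{1}{1-q_j} \;<\; \infty.
\end{equation*}
Consequently $\sum_{\alpha} \sup_{z\in K} |a_\alpha z^\alpha| \leq M \prod_j (1-q_j)^{-1} < \infty$, which is precisely normal convergence on $K$. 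Since $K \subset \Delta$ was an arbitrary compact set, the series $\sum_\alpha a_\alpha z^\alpha$ is normally convergent on $\Delta$, proving the claim.

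There is no real obstacle here; the only point requiring minor care is the choice of a \emph{uniform} $q_j<1$ over the compact $K$ (rather than a pointwise one), which is what upgrades the bound from pointwise convergence to normal convergence, and the justification of the geometric factorization, which is immediate for series of nonnegative terms.
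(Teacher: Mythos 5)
Your proof is correct, and it is the standard argument for this lemma: dominate $\left|a_\alpha z^\alpha\right|$ by $M q^\alpha$ uniformly on a compact subset $K$ of the polydisk, with the compactness of $K$ supplying uniform ratios $q_j<1$, and then factor the nonnegative multi-indexed geometric series into a product of one-dimensional geometric series. There is no proof in the paper to compare against: Theorem~\ref{abel} is quoted in the appendix directly from \cite{scheidemann2005introduction} as a known auxiliary result, so your write-up effectively reproduces the textbook proof, and it does so correctly.

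One small inaccuracy in your reduction step deserves correction. If some $r_j=0$, the hypothesis does \emph{not} force $a_\alpha=0$ for all $\alpha$ with $\alpha_j\geq 1$: for such $\alpha$ one has $a_\alpha r^\alpha=0$ automatically, so the condition $\sup_\alpha\left|a_\alpha r^\alpha\right|<\infty$ carries no information whatsoever about those coefficients. The correct observation is simpler: when $r_j=0$ the region $\left\lbrace z\in\mathbb{C}^n : |z_1|<|r_1|,\dots,|z_n|<|r_n|\right\rbrace$ is empty, so the conclusion is vacuously true and one may assume $r_j\neq 0$ for all $j$ without loss of generality. This slip does not affect the validity of the main argument, which is complete as written.
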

			\begin{theorem}[Weierstrass's M-test]\label{wierstrass}
				Let $\sum_{k=1}^{+\infty} f_n(z)$ be a series of functions on a domain $D$ of $\mathbb{C}^n$. If there exists a sequence of real numbers $M_k$ such that 
				\begin{itemize}
					\item $M_k>0$ for all $k$,
					\item the numerical series $\sum_{k=1}^{+\infty}M_k$ is convergent and
					\item $\forall k$, $\forall z\in  D$,  $ \left|f_k(z)\right|\leq M_k$.
				\end{itemize}
				Then the series  $\sum_{k=1}^{+\infty} f_n(z)$ is absolutely and uniformly convergent on  $D$.
			\end{theorem}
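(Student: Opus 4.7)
The plan is to prove absolute convergence pointwise and uniform convergence via the Cauchy criterion, both reductions boiling down to the comparison with the convergent numerical series $\sum_{k=1}^{\infty}M_k$. These two claims together are exactly the conclusion, and the argument is entirely routine once the comparison step is spelled out.

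First, I would establish absolute convergence at every fixed $z\in D^n$. For each such $z$ and every $k$, the hypothesis gives $|f_k(z)|\leq M_k$. The sequence of partial sums $\left(\sum_{k=1}^N |f_k(z)|\right)_{N\geq 1}$ is therefore nondecreasing and bounded above by $\sum_{k=1}^{\infty}M_k<\infty$, so it converges. This yields absolute convergence of $\sum_{k=1}^{\infty} f_k(z)$ for every $z\in D^n$, and in particular ordinary convergence, so the limiting sum $S(z):=\sum_{k=1}^{\infty} f_k(z)$ is well defined on $D^n$.

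Next, I would prove uniform convergence on $D^n$ by the Cauchy criterion for series. Fix $\epsilon>0$. Since $\sum_{k=1}^{\infty}M_k$ converges in $\mathbb{R}$, its sequence of partial sums is Cauchy, so there exists $N_0\in\mathbb{N}$ such that $\sum_{k=n+1}^{m} M_k<\epsilon$ whenever $m>n\geq N_0$. Denoting the partial sums of the function series by $S_N(z):=\sum_{k=1}^N f_k(z)$, for $m>n\geq N_0$ and any $z\in D^n$ the triangle inequality gives
\begin{equation*}
|S_m(z)-S_n(z)|=\left|\sum_{k=n+1}^{m} f_k(z)\right|\leq \sum_{k=n+1}^{m} |f_k(z)|\leq \sum_{k=n+1}^{m} M_k <\epsilon,
\end{equation*}
and the bound is independent of $z$. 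Letting $m\to\infty$ (using the pointwise convergence already established) yields $|S(z)-S_n(z)|\leq \epsilon$ for all $n\geq N_0$ and all $z\in D^n$, which is the definition of uniform convergence on $D^n$.

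There is no real obstacle in this argument: the entire content is the comparison $|f_k(z)|\leq M_k$ combined with convergence of $\sum M_k$. The only minor point to be careful about is the order of limits in passing from the uniform Cauchy estimate on partial sums to a uniform estimate on the tail of the infinite sum, which is handled by first securing pointwise convergence and then letting $m\to\infty$ inside the already uniform inequality.
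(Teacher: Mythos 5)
Your proof is correct and complete: the comparison argument for pointwise absolute convergence followed by the uniform Cauchy criterion (with the limit $m\to\infty$ taken inside the uniform estimate) is the canonical proof of the M-test. The paper itself states this theorem in its appendix as a recalled classical result without proof, so there is no in-paper argument to compare against; your write-up supplies exactly the standard argument any reference would give.
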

			
			\begin{theorem}[Lie's theorem \cite{erdmann2006introduction} p.49]\label{thm:lie}
				Let $X$ be a nonzero $n$-complex vector space,
				and $\mathfrak{g}$ be a solvable Lie subalgebra of the Lie algebra of $n\times n$ complex matrices. Then $X$ has a basis $(v_1,\dots , v_n)$
				with respect to which every element of $\mathfrak{g}$ has an upper triangular form.
			\end{theorem}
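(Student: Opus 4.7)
The plan is to prove Lie's theorem by induction on $n = \dim X$, reducing at each step to the existence of a common eigenvector for $\mathfrak{g}$. Once I produce $v_1 \in X$ such that $A v_1 \in \mathbb{C} v_1$ for every $A \in \mathfrak{g}$, I pass to the induced action of $\mathfrak{g}$ on the quotient $X / \mathbb{C} v_1$, which is still a solvable Lie algebra of operators on a space of dimension $n-1$ (solvability descending to homomorphic images). The inductive hypothesis yields a basis $(\bar v_2, \ldots, \bar v_n)$ of the quotient in which every element of $\mathfrak{g}$ is upper triangular; lifting it to $(v_1, v_2, \ldots, v_n)$ in $X$ produces the desired basis. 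So the entire argument reduces to the following key lemma: every solvable Lie subalgebra $\mathfrak{g} \subseteq \mathfrak{gl}(X)$, with $X \neq 0$ finite-dimensional over $\mathbb{C}$, admits a common eigenvector.

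I would prove this key lemma by a secondary induction on $\dim \mathfrak{g}$. For $\dim \mathfrak{g} \leq 1$ this is the standard existence of an eigenvector for a complex operator. For the inductive step, solvability together with $\mathfrak{g} \neq 0$ forces $[\mathfrak{g}, \mathfrak{g}] \neq \mathfrak{g}$, so I can choose a codimension-one subspace $\mathfrak{h}$ with $[\mathfrak{g}, \mathfrak{g}] \subseteq \mathfrak{h} \subseteq \mathfrak{g}$. Such an $\mathfrak{h}$ is automatically an ideal of $\mathfrak{g}$ and inherits solvability. Writing $\mathfrak{g} = \mathfrak{h} \oplus \mathbb{C} z$ for some $z \in \mathfrak{g} \setminus \mathfrak{h}$, the inductive hypothesis supplies a common eigenvector $v \in X$ for $\mathfrak{h}$, and the eigenvalues assemble into a linear functional $\lambda : \mathfrak{h} \to \mathbb{C}$ via $hv = \lambda(h)\, v$. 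I then consider the weight space
\[
W = \{ w \in X : hw = \lambda(h)\, w \ \text{for all}\ h \in \mathfrak{h} \},
\]
which is nonzero since $v \in W$. If $W$ is stable under $z$, then $z|_W$ admits an eigenvector $v_1 \in W$ (a complex operator on a nonzero finite-dimensional space), and $v_1$ is simultaneously a $\lambda$-eigenvector for every $h \in \mathfrak{h}$ and an eigenvector for $z$, hence a common eigenvector for all of $\mathfrak{g}$.

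The main obstacle is therefore the $z$-invariance of $W$, which is the technical heart of Lie's theorem and the step that uses characteristic zero. I would handle it by the classical trace trick: fix $w \in W$, set $w_0 = w$ and $w_{i+1} = z w_i$, and let $k$ be the least integer such that $w_0, \ldots, w_k$ are linearly dependent, so $U = \mathrm{span}\{ w_0, \ldots, w_{k-1}\}$ is $z$-invariant by construction. A routine induction on $i$, using $[h, z] \in \mathfrak{h}$ together with $h w_0 = \lambda(h)\, w_0$, shows that in the ordered basis $(w_0, \ldots, w_{k-1})$ every $h \in \mathfrak{h}$ acts on $U$ by an upper triangular matrix with $\lambda(h)$ along the diagonal, hence $\mathrm{tr}_U(h) = k\, \lambda(h)$. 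Since $U$ is stable under both $\mathfrak{h}$ and $z$, the restriction of $[h, z]$ to $U$ is a commutator in $\mathfrak{gl}(U)$ and so has zero trace, yielding $k\, \lambda([h, z]) = 0$; over $\mathbb{C}$ this forces $\lambda([h, z]) = 0$. A short calculation then gives, for any $w \in W$ and $h \in \mathfrak{h}$,
\[
h(zw) = z(hw) + [h, z] w = \lambda(h)\, zw + \lambda([h, z])\, w = \lambda(h)\, zw,
\]
so $zw \in W$. This closes the induction on $\dim \mathfrak{g}$, establishes the key lemma, and thereby completes the proof of Lie's theorem via the outer induction on $n$.
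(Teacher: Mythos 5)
Your proof is correct, but note that the paper does not prove Theorem \ref{thm:lie} at all: it is recalled in Appendix \ref{sec:appendix} as a known background result, cited directly from \cite{erdmann2006introduction}, p.~49. Your argument — the outer induction on $\dim X$ via passage to the quotient $X/\mathbb{C}v_1$, reduced to the common-eigenvector lemma proved by induction on $\dim\mathfrak{g}$ with the codimension-one ideal $\mathfrak{h}\supseteq[\mathfrak{g},\mathfrak{g}]$, the weight space $W$, and the trace trick $\mathrm{tr}_U([h,z]|_U)=k\,\lambda([h,z])=0$ to establish $z$-invariance of $W$ — is precisely the classical proof found in that cited reference, and every step, including the subtle invariance-of-$W$ argument where characteristic zero is used, is carried out correctly.
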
 
			%
			%
			%\section*{Acknowledgments}
			%We would like to acknowledge the assistance of volunteers in putting
			%together this example manuscript and supplement.

			\bibliographystyle{siamplain}
			
			\bibliography{christian_references}

		\end{document}

% --- supplement: christian_supplement.tex ---

\maketitle

\section{A detailed example}

Here we include some equations and theorem-like environments to show
how these are labeled in a supplement and can be referenced from the
main text.
Consider the following equation:
\begin{equation}
  \label{eq:suppa}
  a^2 + b^2 = c^2.
\end{equation}
You can also reference equations such as \cref{eq:matrices,eq:bb} 
from the main article in this supplement.

\lipsum[100-101]

\begin{theorem}
  An example theorem.
\end{theorem}

\lipsum[102]
 
\begin{lemma}
  An example lemma.
\end{lemma}

\lipsum[103-105]

Here is an example citation: \cite{KoMa14}.

\section[Proof of Thm]{Proof of \cref{thm:bigthm}}
\label{sec:proof}

\lipsum[106-112]

\section{Additional experimental results}
\Cref{tab:foo} shows additional
supporting evidence. 

\begin{table}[htbp]
{\footnotesize
  \caption{Example table}  \label{tab:foo}
\begin{center}
  \begin{tabular}{|c|c|c|} \hline
   Species & \bf Mean & \bf Std.~Dev. \\ \hline
    1 & 3.4 & 1.2 \\
    2 & 5.4 & 0.6 \\ \hline
  \end{tabular}
\end{center}
}
\end{table}
%\cite{siam} \cite{KoMa14}
%\bibliographystyle{christian_siamplain}
\bibliographystyle{siamplain}
%\bibliography{references}
\bibliography{christian_references}